\documentclass{amsart}
\usepackage[backend=biber]{biblatex}
\usepackage{topproc}
\usepackage{algorithm}
\usepackage{algorithmic}
\usepackage{graphicx,xcolor}
\usepackage{geometry} % 添加 geometry 包
\usepackage{tikz-cd}
\author{Wei Gong}
\author{Yuanda Ye}
\address{SKLMS, Institute of Computational Mathematics and Scientific/Engineering Computing, Academy of Mathematics and Systems Science, Chinese Academy of Sciences, Beijing, China.}
\email{wgong@lsec.cc.ac.cn}
\address{University of Chinese Academy  of  Sciences \& Institute of Computational Mathematics and Scientific/Engineering Computing, Academy of Mathematics and Systems Science, Chinese Academy of Sciences, Beijing, China.}
\email{yeyuanda@lsec.ac.cc.cn}
\thanks{The authors were supported by the Strategic Priority Research Program of the Chinese Academy of Sciences XDB0640000 \& XDB0640200, the National Key Research and Development Program of China (2022YFA1004402), the NSFC under grant no. 12494543 and 12471393.}

\title[A Penalty Method for Non-Self-Adjoint Topology Optimization]%
{A Penalty Method for Non-Self-Adjoint Topology Optimization}
\subjclass[2020]{Primary 54X10, 58Y30, 18D35; Secondary 55Z10}
\keywords{Topology optimization, Compliant mechanism, Heat transfer, $\Gamma$-convergence, Penalty method}
\begin{document}

%\topProcLogo

\begin{abstract}
We propose a novel penalty method framework for the non-self-adjoint topology optimization problems, taking compliant mechanism problems as an example, by incorporating a convex nonlocal perimeter approximation scheme. We rigorously analyze the existence of solutions to the optimization problem derived from the penalty method. Furthermore, we establish that the discrete problem \(\Gamma\)-converges to the continuous problem, ensuring consistency across scales. To solve the discrete problem, we develop a projected gradient method that guarantees strict monotonic descent of the objective function. We also extend the framework to the heat dissipation problem and propose a generalized material interpolation function (GMIF), which allows for a targeted control of the topological connectivity in the resulting optimal design. Numerical experiments on the compliant mechanism and heat dissipation problems validate the effectiveness of the proposed method. This framework provides a robust approach to addressing complex optimization challenges in computational mathematics with potential applications in engineering design.
\end{abstract}

\maketitle
\section{Introduction}
Topology optimization has emerged as a powerful design tool for determining the optimal distribution of material within a prescribed domain, with applications that span structural mechanics, fluid dynamics, heat transfer, and multiphysics systems \cite{WANG2003227,Bend1989,Sigmund2013,ZHU2020103622}. Unlike shape optimization, which operates on fixed topological configurations, topology optimization allows the nucleation of new holes and the merging of existing boundaries, thus offering significantly greater design flexibility \cite{alla,Sigmund2013}. Since the seminal work of Bendsøe and Kikuchi \cite{Bend1989} on the homogenization approach, numerous methods have been developed, including the solid isotropic material with penalization (SIMP) method \cite{Bend1989}, level set methods \cite{OSHER198812,Stolpe2001,Osher2003,ALLAIRE2004363}, phase field approaches \cite{LI2025103939,JIN2024112932}, evolutionary structural optimization (ESO) \cite{Xie1993}, and more recently thresholding dynamics methods \cite{CHEN2024113119,CHEN202311891216}.

Among the various problem classes in topology optimization, non-self-adjoint problems present distinctive mathematical and computational challenges. In contrast to self-adjoint problems such as minimum compliance design—where the state and adjoint equations coincide—non-self-adjoint problems require solving separate state and adjoint equations, with the objective functional typically depending on both solutions in a non-symmetric manner. Prototypical examples include compliant mechanism design, where the goal is to maximize the output displacement at a specific location subject to an input force, and heat transfer optimization with design-dependent heat generation rates. These problems are characterized by their non-convexity, the potential existence of multiple local minima, and the sensitivity of solutions to numerical parameters and initial guesses.
The numerical treatment of non-self-adjoint topology optimization problems has traditionally relied on gradient-based optimization methods, such as the method of moving asymptotes (MMA) or the optimality criteria (OC) methods \cite{alla}. Although effective in many scenarios, these approaches often suffer from slow convergence, sensitivity to algorithmic parameters, and the need for careful continuation strategies to avoid premature convergence to suboptimal designs \cite{Deaton2014}. Furthermore, the rigorous mathematical analysis of the convergence properties for these methods remains limited (\cite{Suli2022}), particularly with respect to the relationship between discrete and continuous formulations.

In recent years, thresholding dynamics methods have attracted considerable attention as an alternative paradigm for topology optimization. Originally developed by Merriman, Bence, and Osher (MBO) for curvature-driven interface motion \cite{Ruuth2001}, these methods have been successfully extended to image segmentation \cite{Shen2002,ESEDOGLU2006367}, wetting dynamics \cite{XU2017510}, and topology optimization for fluids \cite{CHEN2022} and structures \cite{CHEN202311891216}. The key advantages of the thresholding dynamics method include its simplicity of implementation, high computational efficiency, the natural handling of topological changes through convolution and thresholding operations, and, most importantly, the monotonic descent of the objective functional that is independent of parameters related to the perimeter approximation. However, the direct application of the thresholding dynamics method to non-self-adjoint problems is nontrivial, as the standard energy decay property may fail when the constraint set varies with the design variables \cite{CHEN2024113119}. To address these limitations, Chen et al. \cite{CHEN2024113119} recently proposed a prediction-correction based iterative convolution-thresholding method (ICTM) for heat transfer problems with design-dependent heat generation. Their approach explicitly enforces monotonic descent of the objective functional through a correction step that adjusts the predicted material update based on the actual energy evaluation. Although effective, this method requires solving additional partial differential equations during the correction phase, which increases the computational cost. Moreover, the theoretical foundation of the prediction-correction strategy, particularly with regard to convergence to the continuous problem, remains to be fully established.

In this paper, we propose a penalty-based reformulation tailored for non-self-adjoint topology optimization problems, focusing on compliant mechanism problems. Our main contributions are as follows.
\begin{enumerate}
    \item Transformation of the original problem into an equivalent bilevel formulation in terms of stress variables, followed by a single-level penalized form that enforces the state constraint via a differentiable penalty term. We prove equivalence for sufficiently large penalty parameters.
    
    \item A rigorous convergence analysis using \(\Gamma\)-convergence, showing that the discrete penalized problems \(\Gamma\)-converge to the continuous one and that local minimizers of the discrete problems converge to isolated local minimizers of the continuous problem. 
    
    \item The proposed generalized material interpolation function (GMIF) allows for a  targeted control of the topological connectivity in the resulting optimal design. This is particularly important for heat transfer problems, as thickness constraints are usually imposed for the purpose of  manufacturability. 
\end{enumerate}
We further establish an abstract theoretical framework for a family of functions controlling the shape connectivity and extend the approach to the heat dissipation problems.

The remainder of the paper is organized as follows. Section~\ref{sec:problem} introduces the compliant mechanism problem, its penalty-based reformulation, and the proposed numerical algorithm. Section~\ref{sec:theory} presents the theoretical results, including equivalence proofs and \(\Gamma\)-convergence analysis. Section~\ref{sec:heat} extends the proposed penalty method to the heat transfer problem, details the numerical algorithm, and introduces the family of penalized problems. Section~\ref{sec:numerics} presents the numerical results. Finally, Section~\ref{sec:conclusion} concludes the article and discusses future directions.
  
 \section{A penalty method for compliant mechanism problems}\label{sec:problem}
We first propose an effective convergent algorithm for the compliant mechanism problem. The general idea can be described as follows: 
\begin{enumerate}
    \item Reformulate the compliant mechanism problem to a penalized double minimization problem;
    
    \item Analyze the equivalence between the penalized double minimization problem and the original optimization problem;
    
    \item Propose an efficient algorithm for the penalized optimization problem.
\end{enumerate}
In the derivation of the first part, we omit the perimeter constraint, as it does not affect the derivation.

\subsection{Reformulation into a penalized double minimization problem}
Let $\Omega\subset\mathbb{R}^d$ be a bounded Lipschitz domain and $\partial\Omega=\bar{\Gamma}_D\cup \bar{\Gamma}_N$ such that $\Gamma_D\cap \Gamma_N=\emptyset$. 
For given boundary forces $q_{\rm in},q_{\rm out}\in L^2(\Gamma_N)$, consider the following topology optimization problem
\begin{align}\label{opt_obj}
    &\min_{(\chi,u)\in \mathcal{U}\times H_D^1(\Omega)^d}\ J(\chi,u) = \int_{\Gamma_N}q_{\rm out}\cdot u\ ds
\end{align}
subject to
\begin{equation}\label{elasticity_state}                                
\int_{\Omega}E(\chi)\varepsilon(u):\varepsilon(\hat{u})\ dx=\int_{\Gamma_N}q_{\rm in}\cdot \hat{u}\ ds\quad \forall \hat{u}\in H_D^1(\Omega)^d:=\{v\in H^1(\Omega)^d:\ v=0\ \mbox{on}\ \Gamma_D\},
\end{equation}
where $E(\chi):=(\chi E_{\max} +(1-\chi)E_{\min})E_0$ for some constant tensor $E_0$, positive constants $E_{\max}$ and $E_{\min}$. For some $\beta\in (0,1)$ the set of admissible densities is given by  
\begin{equation}
 \mathcal{U}: =\Big\{\chi\in BV(\Omega):\ \chi\in \{0,1\},\ \int_{\Omega}\chi\ dx \leq\beta|\Omega|\Big\}.\nonumber   
\end{equation}
Here, $u$ is the displacement of the structure under external force $q_{\rm in}$ and $\chi$ is the distribution of the material. We want to find the optimal distribution of materials to minimize compliance under another external force $q_{\rm out}$. For simplicity, we assume that $q_{\rm in},q_{\rm out}$ are located on the same boundary $\Gamma_N$, but can  be defined in different parts of the domain boundary.

The problem \eqref{opt_obj}-\eqref{elasticity_state} is a classical PDE-constrained optimization problem, to compute its derivative, we use the well-known adjoint approach.  Introduce the adjoint problem: find $v\in H_D^1(\Omega)^d$ such that
\begin{equation}\label{elasticity_adjoint}
    \int_{\Omega}E(\chi)\varepsilon(v):\varepsilon(\hat{v})\ dx=\int_{\Gamma_N}q_{\rm out}\cdot \hat{v}\ ds\quad \forall \hat{v}\in H_D^1(\Omega)^d.
\end{equation}
For any $u,v\in H_D^1(\Omega)^d$ we define the following bilinear and linear forms:
\begin{align}
    &a(\chi;u,v): = \int_{\Omega}E(\chi)\varepsilon(u):\varepsilon(v)\ dx,\\
    &\ell_{\rm out}({v}): = \int_{\Gamma_N}q_{\rm out}\cdot {v}\ ds,\quad \ell_{\rm in}({v}): = \int_{\Gamma_N}q_{\rm in}\cdot {v}\ ds.
\end{align}
Setting $\hat u=v$ in \eqref{elasticity_state} and $\hat v=u$ in \eqref{elasticity_adjoint}, we have
\begin{eqnarray}
\ell_{\rm in}(v)=a(\chi;u,v)=\ell_{\rm out}(u). \label{equivalence}   
\end{eqnarray}

Therefore, the optimization problem \eqref{opt_obj}-\eqref{elasticity_state} can be transformed into the following form:
\begin{equation}\label{orign_problem}
    \min_{(\chi,u)\in \mathcal{U}\times H_D^1(\Omega)^d}J(\chi,u)=\ell_{\rm out}(u)\quad \mbox{subject\ to}\quad a(\chi;u,\hat{u})=\ell_{\rm in}(\hat{u})\quad\forall\hat{u}\in H_D^1(\Omega)^d.
\end{equation}
Note that the constraint equation of the above optimization problem is the Euler-Lagrange equation of the following optimization problem:
\begin{eqnarray}
 u=\arg\min\limits_{w\in H_D^1(\Omega)^d} \quad   \dfrac{1}{2}a(\chi;w,w)-\ell_{\rm in}(w).\nonumber 
\end{eqnarray}
If we define 
\begin{align*}
    g_{\rm in}(\chi,w):=\dfrac{1}{2}a(\chi;w,w)-\ell_{\rm in}(w),\quad 
    g_{\rm out}(\chi,w):=\dfrac{1}{2}a(\chi;w,w)-\ell_{\rm out}(w),
\end{align*}
then it is obvious that $ -\min\limits_{w\in H_D^1(\Omega)^d}g_{\rm in}(\chi,w) = \dfrac{1}{2}a(\chi;u,u)$ and $ -\min\limits_{w\in H_D^1(\Omega)^d}g_{\rm out}(\chi,w) = \dfrac{1}{2}a(\chi;v,v)$ with $u$ and $v$ solving the state equation \eqref{elasticity_state} and the adjoint equation \eqref{elasticity_adjoint}, respectively. 

Let $v$ be the solution to the adjoint state equation \eqref{elasticity_adjoint}. To reformulate the optimization problem, we introduce the following new stress variables
\begin{eqnarray}
\sigma=E(\chi)\varepsilon(u), \quad\rho=E(\chi)\varepsilon(v).\nonumber
\end{eqnarray}
In view of \eqref{equivalence}, we can transform the topology optimization problem into the following form \cite{CHEN202311891216}: 
\begin{align}\label{bilevel_modify}
\begin{split}
    \min\limits_{(\chi,\sigma,\varrho)\in \mathcal{U}\times S}\ \tilde{J}(\chi,\sigma,\varrho)=\int_{\Omega}E^{-1}(\chi)\sigma:\varrho\ dx\\
    \mbox{subject\ to}\quad (\sigma,\varrho)=\arg\min_{(\hat\sigma,\hat\varrho)\in S}\Big(\int_{\Omega}E^{-1}(\chi)(\hat\sigma:\hat\sigma+\hat\varrho:\hat\varrho)\ dx\Big),
\end{split}
\end{align}
where $E^{-1}(\chi):=\Big(\Big(\dfrac{1}{E_{\max}}-\dfrac{1}{E_{\min}}\Big)\chi+\dfrac{1}{E_{\min}}\Big)E_0^{-1}$, $S$ is defined by
\begin{equation}
    S:=\{(\sigma,\varrho)\in (L^2(\Omega)^{d\times d})^2:(\sigma,\varrho)_{i,j}=(\sigma,\varrho)_{j,i},\ \nabla\cdot(\sigma,\varrho)=(0,0),\ (\sigma,\varrho)\cdot\vec{n}|_{\Gamma_N}=(q_{\rm in},q_{\rm out})\}.\nonumber
\end{equation}
In Section 2.2 we will prove that the reformulated problem \eqref{bilevel_modify} is equivalent to the original problem \eqref{orign_problem}.

We utilize the penalty method for bilevel optimization problems that are well-known in optimization theory, cf. \cite{ChenMa2025}, to formulate a new optimization problem
\begin{equation}\label{penalty_formule}
    \min_{(\chi,\sigma,\varrho)\in \mathcal{U}\times S}   L(\chi,\sigma,\varrho):=\int_{\Omega}E^{-1}(\chi)\sigma:\varrho\ dx + \lambda\Big( \tilde{g}(\chi,\sigma,\varrho)-\min_{(\hat{\sigma},\hat{\varrho})\in S} \tilde{g}(\chi,\hat{\sigma},\hat{\varrho})\Big),
\end{equation}
where \(\lambda>0\) is the penalty parameter, $\tilde{g}$ is defined by
$$\tilde{g}(\chi,\sigma,\varrho):=\int_{\Omega}E^{-1}(\chi)(\sigma:\sigma+\varrho:\varrho)\ dx.$$
We remark that $\chi$, $\sigma$ and $\rho$ are independent optimization variables in the optimization problem (\ref{penalty_formule}), this is in contrast to the optimization problem \eqref{bilevel_modify}. In Section 2.2 we will prove that in this specific setting, as long as \(\lambda\) is sufficiently large, the optimization problem in penalty form \eqref{penalty_formule} is equivalent to the optimization problem \eqref{bilevel_modify}.

Of course, we could also derive an alternative penalty form, starting from the original problem involving the displacement variables
\begin{equation}\label{orign_penalty}
    \min_{(\chi,u,v)\in\mathcal{U}\times [H_D^1(\Omega)^d]^2}L(\chi)=\ell_{\rm out}(u)+\lambda\Big (g_{\rm in}(\chi,u)+g_{\rm out}(\chi,v)-\min_{(\hat u,\hat v)\in[H_D^1(\Omega)^d]^2}(g_{\rm in}(\chi,\hat{u})+g_{\rm out}(\chi,\hat{v}))\Big).
\end{equation}
However, this penalty form \eqref{orign_penalty} does not possess the property of being equivalent to the original problem \eqref{orign_problem} when \(\lambda\) is sufficiently large. Therefore, we will not discuss it in depth here. However, this form has its advantages, as it can handle cases with design-dependent body forces. In our numerical experiments, we implemented the penalty method based on the above displacement formulation. We can observe similar optimal shapes with those obtained by  the  penalty formulation based on the stress variables if the penalty parameter $\lambda$ is large enough. %which I have not seen addressed in the review articles on engineering problems that I consulted \cite{ZHU2020103622}.

\subsection{Equivalence between the original problem and the penalty form}

First, we prove the following lemma, which reveals the equivalence between the minimization problem we introduced and the linear elasticity equation.
\begin{lemma}\label{stress_equ}
    The first-order optimality condition for the minimization problem 
    \begin{equation}\label{stress_problem}
        \min_{\sigma\in S_1}\int_{\Omega}E^{-1}(\chi)\sigma:\sigma\ dx
    \end{equation}
    can be characterized as follows
    \begin{equation}
    \left\{
    \begin{aligned}
    &\sigma = E(\chi)\varepsilon(\lambda) \quad &\text{in } \Omega, \\
    &\nabla \cdot \sigma = 0 \quad &\text{in } \Omega, \\
    &\lambda = 0 \quad &\text{on } \Gamma_D, \\
    %&\lambda = -\mu \quad &\text{on } \Gamma_N, \\
    &\lambda = -\mu,\quad \sigma \cdot \vec{n} = q_{\rm in} \quad &\text{on } \Gamma_N,
    \end{aligned}
    \right.
    \end{equation}
    where
    $$ S_1:=\{\sigma\in L^2(\Omega)^{d\times d}:\sigma_{i,j}=\sigma_{j,i},\ \nabla\cdot\sigma=0,\ \sigma\cdot\vec{n}|_{\Gamma_N}=q_{\rm in}\}.$$
\end{lemma}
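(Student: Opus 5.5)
This is a Lagrangian / KKT argument for a convex quadratic problem with affine constraints. Throughout, $\lambda$ denotes the multiplier field from the statement, not the penalty parameter.

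\textbf{Setting up the Lagrangian.} The constraints $\nabla\cdot\sigma=0$ in $\Omega$ and $\sigma\cdot\vec n=q_{\rm in}$ on $\Gamma_N$ are imposed with multipliers: a displacement-like field $\lambda\in H^1(\Omega)^d$ and a boundary multiplier $\mu\in H^{-1/2}(\Gamma_N)^d$. Symmetry of $\sigma$ is built into the space of symmetric $L^2$ tensors. The natural weak form of the constraints gives
\begin{equation*}
\mathcal L(\sigma,\lambda,\mu)=\frac12\int_\Omega E^{-1}(\chi)\sigma:\sigma\,dx+\int_\Omega \lambda\cdot(\nabla\cdot\sigma)\,dx+\langle \mu,\sigma\cdot\vec n-q_{\rm in}\rangle_{\Gamma_N}.
\end{equation*}
The divergence term is only formal for $\sigma\in L^2$. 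I will therefore integrate it by parts:
\begin{equation*}
\int_\Omega\lambda\cdot(\nabla\cdot\sigma)\,dx=-\int_\Omega\varepsilon(\lambda):\sigma\,dx+\langle\lambda,\sigma\cdot\vec n\rangle_{\partial\Omega},
\end{equation*}
using the symmetry of $\sigma$ to replace $\nabla\lambda$ by $\varepsilon(\lambda)$. This form is well defined on $H(\mathrm{div})$-type symmetric tensors.

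\textbf{Stationarity.} Next I vary $\sigma$ arbitrarily among symmetric tensors with enough regularity for traces, and set the directional derivative $\delta\sigma\mapsto\partial_\sigma\mathcal L[\delta\sigma]$ to zero.
\begin{itemize}
\item Taking $\delta\sigma$ compactly supported gives $E^{-1}(\chi)\sigma=\varepsilon(\lambda)$, that is, $\sigma=E(\chi)\varepsilon(\lambda)$. This uses that $E(\chi)$ is pointwise invertible, since $E_{\min},E_{\max}>0$ and $E_0$ is positive definite.
\item Taking $\delta\sigma$ with normal trace supported on $\Gamma_D$ forces $\lambda=0$ on $\Gamma_D$. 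This is the natural boundary condition, since $\sigma\cdot\vec n$ is unconstrained there.
\item Taking normal traces on $\Gamma_N$ gives $\lambda+\mu=0$, i.e.\ $\lambda=-\mu$ on $\Gamma_N$.
\end{itemize}
Finally, stationarity in $\lambda$ and $\mu$ returns the primal constraints $\nabla\cdot\sigma=0$ and $\sigma\cdot\vec n=q_{\rm in}$, which completes the system.

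\textbf{Justification.} To make this rigorous rather than formal, I would argue by orthogonality. The minimizer exists and is unique because the functional is strictly convex and coercive on the closed affine set $S_1$, which is nonempty: $E(\chi)\varepsilon(u)$ lies in it, with $u$ the state solution. The optimality condition is then $\int_\Omega E^{-1}(\chi)\sigma:\tau\,dx=0$ for all $\tau$ in the tangent space
\begin{equation*}
S_0=\{\tau \text{ symmetric}:\ \nabla\cdot\tau=0,\ \tau\cdot\vec n|_{\Gamma_N}=0\}.
\end{equation*}
The key step is the orthogonal decomposition of symmetric $L^2$ tensors:
\begin{equation*}
S_0^{\perp}=\{\varepsilon(w):\ w\in H^1_D(\Omega)^d\}.
\end{equation*}
Here the inclusion "$\supseteq$" is the Green formula above. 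The reverse inclusion follows from closedness of the range of $\varepsilon$ on $H^1_D$ (Korn's inequality, which needs $|\Gamma_D|>0$) together with a duality/closed range argument. Hence $E^{-1}(\chi)\sigma=\varepsilon(\lambda)$ for some $\lambda\in H^1_D$, which gives the first and third equations. Defining $\mu:=-\lambda|_{\Gamma_N}$ supplies the remaining one.

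As a consistency check, testing against $\varepsilon(\hat u)$ shows that $\lambda$ solves the state equation \eqref{elasticity_state}, so $\lambda=u$. The main obstacle is this decomposition step, namely Korn plus closed range, and the correct functional setting for the traces on $\Gamma_N$.
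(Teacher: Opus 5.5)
Your formal Lagrangian computation is essentially the paper's whole proof. The paper writes the same Lagrangian (with $\lambda,\mu$ taken in $H^1$), differentiates in $\sigma$, integrates by parts, and reads off $E^{-1}(\chi)\sigma=\varepsilon(\lambda)$, $\lambda=0$ on $\Gamma_D$ and $\lambda=-\mu$ on $\Gamma_N$ by choosing test tensors $\hat\sigma$. It stops there, tacitly assuming that the multipliers exist.

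Your justification paragraph is the genuinely different part, and it is correct. You reduce optimality to orthogonality of $E^{-1}(\chi)\sigma$ to the tangent space of $S_1$. You then use $S_0=\varepsilon(H_D^1(\Omega)^d)^{\perp}$, with the normal trace understood weakly, together with closedness of that range (Korn). This gives $S_0^{\perp}=\overline{\varepsilon(H_D^1(\Omega)^d)}=\varepsilon(H_D^1(\Omega)^d)$, and hence an actual multiplier $\lambda\in H_D^1(\Omega)^d$. What this buys over the paper is:
\begin{itemize}
\item an existence statement for the multipliers, i.e.\ the constraint qualification the paper skips;
\item correct function spaces;
\item the identification $\lambda=u$, which the paper only asserts in the remark after the lemma.
\end{itemize}

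The step you flag as the main obstacle can in fact be sidestepped. The tensor $E(\chi)\varepsilon(u)$ lies in $S_1$, and it satisfies the orthogonality condition by the easy (Green) inclusion alone. By strict convexity it is therefore the unique minimizer, so $\lambda=u$ serves as the multiplier. Conversely, any $\sigma$ solving the system is optimal by the same Green computation. So the closed-range direction is never needed; Korn enters only through Lax--Milgram for $u$. This is exactly the type of computation the paper uses later, in proving equivalence of the penalty form.

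One small slip: the multiplier $\mu$ for the normal-trace constraint must lie in the space dual to normal traces on $\Gamma_N$, i.e.\ $H^{1/2}_{00}(\Gamma_N)^d$, not $H^{-1/2}(\Gamma_N)^d$. That is consistent with your own final choice $\mu=-\lambda|_{\Gamma_N}$.
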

\begin{proof}
    First, we introduce the function space 
    $$ S_0:= \{\sigma\in L^2(\Omega)^{d\times d}:\sigma_{i,j}=\sigma_{j,i}\}$$ 
    and construct the following Lagrange functional $ L:S_0\times H^1(\Omega)\times H^1(\Omega)\rightarrow \mathbb{R}$
    $$\mathcal{L}(\sigma,\lambda,\mu) = \dfrac{1}{2}\int_{\Omega}E^{-1}(\chi)\sigma:\sigma\ dx+\int_{\Omega}(\nabla\cdot\sigma)\cdot\lambda\ dx+\int_{\Gamma_N}(\sigma\cdot\vec{n}-q_{\rm in})\cdot\mu\ ds.$$
    Then we can derive the first-order optimality condition as follows: 
    \begin{align}
        &\dfrac{\partial \mathcal{L}}{\partial \lambda}(\hat{\lambda})=\int_{\Omega}\nabla\cdot\sigma \cdot\hat{\lambda}\ dx=0,\nonumber\\
        &\dfrac{\partial \mathcal{L}}{\partial \mu}(\hat{\mu})=\int_{\Gamma_N}(\sigma\cdot\vec{n}-q_{\rm in})\cdot\hat{\mu}\ ds=0,\nonumber\\
        &\dfrac{\partial \mathcal{L}}{\partial \sigma}(\hat{\sigma})=\int_{\Omega}E^{-1}(\chi)\sigma:\hat{\sigma}\ dx+\int_{\Omega}(\nabla\cdot\hat{\sigma})\cdot\lambda\ dx+\int_{\Gamma_N}\hat{\sigma}\cdot\vec{n}\cdot\mu\ ds=0,\nonumber
    \end{align}
    where $\hat{\lambda}\in H^1(\Omega)$, $\hat{\mu}\in H^1(\Omega)$ and $\hat{\sigma}\in S_0$. 
    For the last equation, we have
    \begin{align*}
        0=&\int_{\Omega}E^{-1}(\chi)\sigma:\hat{\sigma}\ dx+\int_{\Omega}(\nabla\cdot\hat{\sigma})\cdot\lambda\ dx+\int_{\Gamma_N}\hat{\sigma}\cdot\vec{n}\cdot\mu\ ds\\
        =&\int_{\Omega}E^{-1}(\chi)\sigma:\hat{\sigma}\ dx+\int_{\Gamma_N\cup\Gamma_D}\hat{\sigma}\cdot\vec{n}\cdot\lambda\ dx-\int_{\Omega}\hat{\sigma}:\nabla\lambda\ dx+\int_{\Gamma_N}\hat{\sigma}\cdot\vec{n}\cdot\mu\ ds\\
        =&\int_{\Omega}(E^{-1}(\chi)\sigma-\varepsilon(\lambda)):\hat{\sigma}\ dx+\int_{\Gamma_D}\hat{\sigma}\cdot\vec{n}\cdot\lambda\ dx+\int_{\Gamma_N}\hat{\sigma}\cdot\vec{n}\cdot(\lambda+\mu).
    \end{align*}
Choosing arbitrary $\hat{\sigma}\in S_0$ we can derive that
     $E^{-1}(\chi)\sigma=\varepsilon(\lambda).$
    Setting $\hat{\sigma}\in S_0$ and $\hat{\sigma}\cdot\vec{n}|_{\Gamma_N}=0$ implies
    $\lambda =0$ on $\Gamma_D.$
    Lastly, setting $\hat{\sigma}\in H(\mbox{div};\Omega)$ yields
    $ \lambda = -\mu$ on $\Gamma_N.$
     Thus, the result is proven, where $ \lambda$ and $\mu $ are Lagrange multipliers.
\end{proof}
Here, we should note that the problem \eqref{stress_problem} is a strictly convex optimization problem, so the solution is unique (if a solution exists), and the first-order optimality condition is necessary and sufficient. On the other hand, the first-order optimality condition corresponds to the linear elasticity equation. Thus, we establish the existence and uniqueness of the solution to the problem \eqref{stress_problem}. This also demonstrates the existence and uniqueness of the solution to the optimization problem \(\min_{(\hat{\sigma}, \hat{\varrho})\in S} \tilde{g}(\chi,\hat{\sigma},\hat{\varrho})\). Hence, we can prove the following theorem.
\begin{theorem}
    The problem \eqref{orign_problem} is equivalent to the problem \eqref{bilevel_modify}, as they are different formulations of the same problem. Specifically, \eqref{orign_problem} is formulated with the displacement field as the state variable, while \eqref{bilevel_modify} is formulated with the stress field as the state variable.
\end{theorem}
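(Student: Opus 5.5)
We will prove the equivalence by exhibiting, for each fixed admissible design $\chi\in\mathcal{U}$, a correspondence between the feasible sets of the two problems under which the objective values agree. Once this is done, minimizing over $\chi$ gives the same optimal values, and the minimizers correspond via $\sigma=E(\chi)\varepsilon(u)$ and $\varrho=E(\chi)\varepsilon(v)$. Here $u$ and $v$ solve \eqref{elasticity_state} and \eqref{elasticity_adjoint}, respectively.

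\emph{Step 1 (lower-level problem of \eqref{bilevel_modify}).} Fix $\chi\in\mathcal{U}$. The functional $\tilde g(\chi,\cdot,\cdot)$ decouples into $\int_\Omega E^{-1}(\chi)\hat\sigma:\hat\sigma\,dx$ and $\int_\Omega E^{-1}(\chi)\hat\varrho:\hat\varrho\,dx$, and $S=S_1\times S_2$. Here $S_2$ is defined like $S_1$ but with $q_{\rm out}$ in place of $q_{\rm in}$. So the lower-level problem splits into two independent problems of the form \eqref{stress_problem}. Each is strictly convex, since $E^{-1}(\chi)$ is uniformly positive definite: its coefficient lies between $1/E_{\max}$ and $1/E_{\min}$. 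By Lemma~\ref{stress_equ} and the remark after it, the unique minimizer $\sigma$ satisfies $\sigma=E(\chi)\varepsilon(\lambda)$, $\nabla\cdot\sigma=0$, $\lambda=0$ on $\Gamma_D$, and $\sigma\cdot\vec n=q_{\rm in}$ on $\Gamma_N$. Therefore $\lambda\in H^1_D(\Omega)^d$ solves the weak form \eqref{elasticity_state}, and by uniqueness (Korn's inequality plus Lax--Milgram) $\lambda=u$. The same argument gives $\varrho=E(\chi)\varepsilon(v)$.

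\emph{Step 2 (matching objectives).} Using $E^{-1}(\chi)E(\chi)=\mathrm{Id}$ (pointwise, since $\chi\in\{0,1\}$ and the scalar factors are reciprocal) and symmetry of $E(\chi)$, we get
\[
\tilde J(\chi,\sigma,\varrho)=\int_\Omega E^{-1}(\chi)E(\chi)\varepsilon(u):E(\chi)\varepsilon(v)\,dx=a(\chi;u,v).
\]
By \eqref{equivalence}, $a(\chi;u,v)=\ell_{\rm out}(u)=J(\chi,u)$. Thus for every $\chi$ the unique feasible triple of \eqref{bilevel_modify} has the same cost as the unique feasible pair $(\chi,u)$ of \eqref{orign_problem}. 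Conversely, from a feasible $(\chi,u)$ we build $\sigma,\varrho$ through $v$, and Step 1 shows this triple is feasible. Taking infima over $\chi\in\mathcal{U}$ gives equality of the optimal values and the correspondence of minimizers.

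\emph{Main obstacle.} The delicate point is Step 1, namely rigorously identifying the Lagrange multiplier $\lambda$ with the displacement $u$. The Lagrangian argument in Lemma~\ref{stress_equ} is formal, so one must verify that the minimizer in $S_1$ exists and that the multiplier lies in $H^1_D$. I would avoid the multiplier issue by a direct verification. Set $\sigma^*=E(\chi)\varepsilon(u)$; it belongs to $S_1$ because $u$ solves \eqref{elasticity_state}. For any $\tau\in S_1$, the difference $\tau-\sigma^*$ is divergence free with zero normal trace on $\Gamma_N$. Hence
\[
\int_\Omega E^{-1}(\chi)\sigma^*:(\tau-\sigma^*)\,dx=\int_\Omega\varepsilon(u):(\tau-\sigma^*)\,dx=0
\]
by integration by parts, using $u=0$ on $\Gamma_D$. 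Strict convexity then shows $\sigma^*$ is the unique minimizer. This gives Step 1 without relying on the regularity of the multipliers.
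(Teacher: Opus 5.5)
Your proposal is correct and follows the paper's route. For each fixed $\chi\in\mathcal{U}$ you identify the lower-level minimizer of \eqref{bilevel_modify} with $(E(\chi)\varepsilon(u),E(\chi)\varepsilon(v))$, then use $E^{-1}(\chi)E(\chi)=\mathrm{Id}$ and \eqref{equivalence} to get $\tilde J(\chi,\sigma,\varrho)=a(\chi;u,v)=\ell_{\rm out}(u)=J(\chi,u)$. The only difference is that you back up the formal Lagrange-multiplier argument of Lemma~\ref{stress_equ} with a direct orthogonality-plus-strict-convexity check. That check is more rigorous, and it is the same integration-by-parts computation the paper itself carries out in the proof of the subsequent penalty-equivalence theorem.
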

\begin{proof}
    Let $u^*$ and $v^*$ be solutions of the state equation \eqref{elasticity_state} and the adjoint state equation \eqref{elasticity_adjoint}, respectively. We have established in Lemma \ref{stress_equ} that the constraint of problem \eqref{bilevel_modify} is essentially equivalent to solving two linear elasticity equations, where \(\sigma^*=E(\chi)\varepsilon(u^*)\) and \(\varrho^*=E(\chi)\varepsilon(v^*)\) are the stress fields corresponding to the state equation \eqref{elasticity_state} and the adjoint equation \eqref{elasticity_adjoint}. Therefore, we have
    \begin{align*}
        \tilde{J}(\chi,\sigma^*,\varrho^*)=\int_\Omega E^{-1}(\chi)\sigma^*:\varrho^*\ dx
        =\int_\Omega E(\chi)\varepsilon(u^*):\varepsilon(v^*)\ dx
        =a(\chi;u^*,v^*)
        =\ell_{\rm out}(u^*)
        =J(\chi,u^*),
    \end{align*}
    which demonstrates the equivalence between problem \eqref{orign_problem} and problem \eqref{bilevel_modify}.
\end{proof}
In the following, we prove the equivalence between the problem \eqref{bilevel_modify} and the penalty form \eqref{penalty_formule}.
\begin{theorem}
    Provided $ \lambda > 1/2 $, it holds that 
    \begin{equation} 
        \min_ {(\hat\sigma,\hat\varrho)\in S}L(\chi,\hat\sigma,\hat\varrho) = \tilde{J}(\chi,\sigma^*,\varrho^*),
    \end{equation}
    where $(\sigma^*,\varrho^*)\in \arg\min\limits_{(\hat{\sigma},\hat{\varrho})\in S}\tilde{g}(\chi,\hat\sigma,\hat\varrho)$.
\end{theorem}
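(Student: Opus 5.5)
Fix $\chi\in\mathcal{U}$ and write $A:=E^{-1}(\chi)$, a pointwise symmetric positive definite tensor bounded above and below since $E_{\min},E_{\max}>0$. Put $(\sigma,\varrho)=(\sigma^*+\tau,\varrho^*+\eta)$ for an arbitrary element of $S$, where $(\sigma^*,\varrho^*)$ is the unique minimizer of $\tilde g(\chi,\cdot,\cdot)$ given by Lemma \ref{stress_equ}. Then $(\tau,\eta)$ ranges over the homogeneous space
\[
S^0:=\{(\tau,\eta)\ \text{symmetric}:\ \nabla\cdot\tau=\nabla\cdot\eta=0,\ \tau\cdot\vec n|_{\Gamma_N}=\eta\cdot\vec n|_{\Gamma_N}=0\}.
\]
The plan is to expand $L(\chi,\sigma,\varrho)-\tilde J(\chi,\sigma^*,\varrho^*)$ in $(\tau,\eta)$ and show that it is a nonnegative quadratic form when $\lambda>1/2$, vanishing at $(\tau,\eta)=(0,0)$.

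\textbf{Step 1 (orthogonality).} By Lemma \ref{stress_equ}, $A\sigma^*=\varepsilon(u^*)$ with $u^*=0$ on $\Gamma_D$. For $\tau\in S^0$ integration by parts gives
\[
\int_\Omega A\sigma^*:\tau\,dx=\int_\Omega \varepsilon(u^*):\tau\,dx=-\int_\Omega u^*\cdot(\nabla\cdot\tau)\,dx+\int_{\partial\Omega}u^*\cdot\tau\vec n\,ds=0,
\]
since the boundary term vanishes on $\Gamma_D$ because $u^*=0$ and on $\Gamma_N$ because $\tau\vec n=0$. In the same way $\int_\Omega A\varrho^*:\eta\,dx=0$ and $\int_\Omega A\sigma^*:\eta\,dx=\int_\Omega A\varrho^*:\tau\,dx=0$. (This is the first-order optimality of $(\sigma^*,\varrho^*)$.)

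\textbf{Step 2 (expansion).} Using Step 1, the cross terms drop out and
\[
\tilde g(\chi,\sigma,\varrho)-\tilde g(\chi,\sigma^*,\varrho^*)=\int_\Omega A\tau:\tau+A\eta:\eta\,dx,
\qquad
\int_\Omega A\sigma:\varrho\,dx=\tilde J(\chi,\sigma^*,\varrho^*)+\int_\Omega A\tau:\eta\,dx .
\]
Hence
\[
L(\chi,\sigma,\varrho)-\tilde J(\chi,\sigma^*,\varrho^*)=\int_\Omega\big(A\tau:\eta+\lambda(A\tau:\tau+A\eta:\eta)\big)\,dx .
\]

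\textbf{Step 3 (coercivity).} This is the only real inequality, and also where the threshold $\lambda>1/2$ enters. Since $A$ is symmetric positive definite pointwise, Cauchy--Schwarz and Young in the $A$-inner product give
\[
|A\tau:\eta|\le\tfrac12\,(A\tau:\tau+A\eta:\eta).
\]
So the integrand is bounded below by $(\lambda-\tfrac12)(A\tau:\tau+A\eta:\eta)\ge0$. The difference is therefore nonnegative, and it vanishes at $\tau=\eta=0$, which proves the stated identity of minima. For $\lambda>1/2$ the lower bound is strict whenever $(\tau,\eta)\neq0$, so $(\sigma^*,\varrho^*)$ is moreover the unique minimizer of $L(\chi,\cdot,\cdot)$; this is what yields the equivalence with \eqref{bilevel_modify}. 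At $\lambda=1/2$ the choice $\eta=-\tau$ makes the bound sharp, which explains the threshold.

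The main obstacle is the rigor of Step 1: the integration by parts for $\tau\in H(\mathrm{div})$ with a normal trace condition on only part of the boundary. I would justify it with the $H^{-1/2}$–$H^{1/2}$ duality on $\partial\Omega$, using that $u^*\in H_D^1(\Omega)^d$ and that $\tau\vec n$ vanishes on $\Gamma_N$, which is built into the definition of $S$.
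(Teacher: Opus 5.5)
Your proof is correct and is essentially the paper's argument. The paper rewrites $L(\chi,\hat\sigma,\hat\varrho)=\tfrac12\int_\Omega E^{-1}(\chi)(\hat\sigma+\hat\varrho):(\hat\sigma+\hat\varrho)\,dx+(\lambda-\tfrac12)\tilde g(\chi,\hat\sigma,\hat\varrho)-\lambda\min_S\tilde g$, which is the polarized form of your Young bound in Step 3, and shows each piece is minimized at $(\sigma^*,\varrho^*)$ by integrating by parts against $u^*+v^*$, the same orthogonality you use in Step 1.
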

\begin{proof}
    First, let $u^*$ and $v^*$ be the solutions of \eqref{elasticity_state} and \eqref{elasticity_adjoint}, respectively. We observe that
    \begin{equation}
    \sigma^*=E(\chi)\varepsilon(u^*),\quad \varrho^*=E(\chi)\varepsilon(v^*).
    \end{equation}
    Next, we prove that when $ \lambda > 1/2 $, $L(\chi,\sigma^*,\varrho^*)= \min\limits_{(\hat\sigma,\hat\varrho)\in S}L(\chi,\hat\sigma,\hat\varrho)$.  
    Note that
    \begin{align*}
        L(\chi,\hat\sigma,\hat\varrho)-L(\chi,\sigma^*,\varrho^*)
        &=\int_{\Omega}E^{-1}(\chi)\hat\sigma:\hat\varrho\ dx +\lambda\Big(\int_{\Omega}E^{-1}(\chi)(\hat\sigma:\hat\sigma+\hat\varrho:\hat\varrho)\ dx\Big)\\
        &-\lambda\Big(\int_{\Omega}E^{-1}(\chi)(\sigma^*:\sigma^*+\varrho^*:\varrho^*)\ dx\Big)-\int_{\Omega}E^{-1}(\chi)\sigma^*:\varrho^*\ dx\\
        &=\dfrac{1}{2}\int_{\Omega}E^{-1}(\chi)(\hat\sigma+\hat\varrho):(\hat\sigma+\hat\varrho)\ dx+\Big(\lambda-\dfrac{1}{2}\Big)\tilde{g}(\chi,\hat\sigma,\hat\varrho)\\
        &-\dfrac{1}{2}\int_{\Omega}E^{-1}(\chi)(\sigma^*+\varrho^*):(\sigma^*+\varrho^*)\ dx-\Big(\lambda-\dfrac{1}{2}\Big)\tilde{g}(\chi,\sigma^*,\varrho^*).
    \end{align*}
    Since $(\sigma^*,\varrho^*)\in \arg\min_{(\hat{\sigma},\hat{\varrho})\in S} \tilde{g}(\chi,\hat{\sigma},\hat{\varrho})$, we only need to prove that
    $$(\sigma^*+\varrho^*)\in\arg\min_{(\hat\sigma,\hat\varrho)\in S}\int_{\Omega}E^{-1}(\chi)(\hat\sigma+\hat\varrho):(\hat\sigma+\hat\varrho)\ dx.$$
Indeed, it follows from the fact $(\sigma^*,\varrho^*),(\hat\sigma,\hat\varrho)\in S$ that 
    \begin{align*}
        &\int_{\Omega}E^{-1}(\chi)(\hat\sigma+\hat\varrho):(\hat\sigma+\hat\varrho)\ dx-\int_{\Omega}E^{-1}(\chi)(\sigma^*+\varrho^*):(\sigma^*+\varrho^*)\ dx\\
        =&\int_{\Omega}E^{-1}(\chi)|\hat\sigma+\hat\varrho-\sigma^*-\varrho^*|^2\ dx + 2\int_{\Omega}E^{-1}(\chi)(\sigma^*+\varrho^*):(\hat\sigma+\hat\varrho-\sigma^*-\varrho^*)\ dx\\
        =&\int_{\Omega}E^{-1}(\chi)|\hat\sigma+\hat\varrho-\sigma^*-\varrho^*|^2\ dx + 2\int_{\Omega}(\varepsilon(u^*)+\varepsilon(v^*)):(\hat\sigma+\hat\varrho-\sigma^*-\varrho^*)\ dx\\
        =&\int_{\Omega}E^{-1}(\chi)|\hat\sigma+\hat\varrho-\sigma^*-\varrho^*|^2\ dx - 2\int_{\Omega}(u^*+v^*)\cdot\nabla\cdot(\hat\sigma+\hat\varrho-\sigma^*-\varrho^*)\ dx\\
        &+2\int_{\Gamma_D\cup\Gamma_N}(u^*+v^*)\cdot(\hat\sigma+\hat\varrho-\sigma^*-\varrho^*)\cdot\vec{n}\ ds\\
        =&\int_{\Omega}E^{-1}(\chi)|\hat\sigma+\hat\varrho-\sigma^*-\varrho^*|^2\ dx\geq 0.
    \end{align*}
    The inequality above indicates that we have identified a minimizer so that the problem has a solution. Therefore, further discussion of the existence of solutions is unnecessary. This finishes the proof.
\end{proof}
Therefore, we can immediately show that the compliant mechanism problem \eqref{orign_problem} is equivalent to the following double minimization problem:
\begin{equation}
    \min_{\chi\in\mathcal{U}}\min_{(\sigma,\varrho)\in S}L(\chi,\sigma,\varrho).
\end{equation}
Moreover, the solution of the inner minimization problem can be obtained by solving the state equation
\begin{equation}
    a(\chi;u,\hat{u})=\ell_{\rm in}(\hat{u})\quad\forall\hat{u}\in H_D^1(\Omega)^d,
\end{equation}
the adjoint state equation
\begin{equation}
    a(\chi;v,\hat{v})=\ell_{\rm out}(\hat{v})\quad\forall\hat{v}\in H_D^1(\Omega)^d,
\end{equation}
and set \(\sigma = E(\chi)\varepsilon(u)\), \(\rho = E(\chi)\varepsilon(v)\).

\subsection{Derivation of the optimization algorithm}
Since $\mathcal{U}$ is not a convex set, it is challenging to analyze the properties of the objective functional with respect to \(\chi\). To do this, we first relax $ \chi $ to its convex hull $\bar{\mathcal{U}}$ and consider the following double minimization problem
\begin{equation}
    \min\limits_{\chi\in \bar{\mathcal{U}}}\min\limits_{(\sigma,\rho)\in S}L(\chi,\sigma,\varrho):=\int_{\Omega}E^{-1}(\chi)\sigma:\varrho\ dx + \lambda\left( \tilde{g}(\chi,\sigma,\varrho)-\min_{(\hat{\sigma},\hat{\varrho})\in S} \tilde{g}(\chi,\hat{\sigma},\hat{\varrho})\right),
\end{equation}
where $$ \bar{\mathcal{U}}:=\Big\{\chi\in BV(\Omega):\ \chi\in [0,1],\int_{\Omega}\chi\ dx \leq\beta|\Omega|\Big\}.$$ 

It should be noted that, due to the relaxation of $ \chi$ from $\mathcal{U}$ to $ \bar{\mathcal{U}} $, some previously valid equations no longer hold, such as:
\begin{align}
    a(\chi;u,v)=\int_{\Omega}E(\chi)\varepsilon(u):\varepsilon(v)\ dx\neq\int_{\Omega}E^{-1}(\chi)\sigma:\varrho\ dx\quad\mbox{for\ some}\  \chi\in\bar{\mathcal{U}},
\end{align}
because $E^{-1}(\chi)\neq E(\chi)^{-1}$ when $\chi\in [0,1]$. Therefore, for $ \chi $ belonging to the closed convex set $\bar{\mathcal{U}}$, we introduce the following notation: 
\begin{equation}
    E(\chi)=\dfrac{1}{A(\chi)}{E}_0,\quad E^{-1}(\chi)=A(\chi){E}_0^{-1},
\end{equation}
 and
$$ A(\chi)=\Big(\dfrac{1}{E_{\max}}-\dfrac{1}{E_{\min}}\Big)\chi+\dfrac{1}{E_{\min}} .$$
Clearly, when $ \chi \in \mathcal{U} $ is the indicator function, the following equality is maintained
$$\dfrac{1}{A(\chi)} = (E_{\max}-E_{\min})\chi + E_{\min}.$$
Therefore, these definitions are reasonable extensions of $ E^{-1}(\chi) $ and $ E(\chi) $ defined in Section 2.1.

Throughout the remainder of this paper, we redefine \( a(\chi; u, v): L^\infty(\Omega) \times H_D^1(\Omega)^d \times H_D^1(\Omega)^d \to \mathbb{R} \) as follows
\begin{equation}
    a(\chi; u, v):=\int_\Omega\dfrac{1}{A(\chi)}{E}_0\varepsilon(u):\varepsilon(v)\ dx.
\end{equation}
Of course, such an extension also introduces a problem, namely that the optimization problem formulated in terms of the stress field and the optimization problem formulated in terms of the displacement field are essentially two distinct problems, with different properties on \(\mathcal{U}\). This is precisely the reason why transforming the problem to the stress field formulation yields better numerical performance. However, in the section on theoretical analysis, we need to clearly articulate the relationship between each step of the transformation.

First, we derive the Fréchet derivative of $ L $ with respect to $ \chi $.
\begin{lemma}\label{F-derivative}
    Define $$G(\chi) = \min_{(\hat\sigma,\hat\varrho)\in S} \tilde{g}(\chi,\hat\sigma,\hat\varrho):=\int_{\Omega}A(\chi)E_0^{-1}(\hat\sigma:\hat\sigma+\hat\varrho:\hat\varrho)\ dx,$$
    then from Lemma \ref{stress_equ} we have
    \begin{eqnarray}
        G(\chi) =\int_\Omega A(\chi)E_0^{-1}\big(\sigma(\chi):\sigma(\chi)+\varrho(\chi):\varrho(\chi)\big)\ dx.\nonumber
    \end{eqnarray}
    Here we denote the stress fields corresponding to \(\chi\) as \(\sigma(\chi):=\dfrac{1}{A(\chi)}{E}_0\varepsilon(u)\) and \(\varrho(\chi):=\dfrac{1}{A(\chi)}{E}_0\varepsilon(v)\) to emphasize their dependence on \(\chi\),
%    \begin{eqnarray}
%    (\sigma(\chi),\varrho(\chi)):=(\dfrac{1}{A(\chi)}{E}_0\varepsilon(u),\dfrac{1}{A(\chi)}{E}_0\varepsilon(v)),\nonumber    
%    \end{eqnarray}
    where $ u\in H_D^1(\Omega)^d $ and $ v\in H_D^1(\Omega)^d $ satisfy
    \begin{eqnarray} \label{u_weak}
    \int_{\Omega}\dfrac{1}{A(\chi)}{E}_0\varepsilon(u):\varepsilon(\hat{u})\ dx=\int_{\Gamma_N}q_{\rm in}\cdot\hat{u}\ ds\quad\forall\hat{u}\in H_D^1(\Omega)^d
    \end{eqnarray}
    and 
    \begin{eqnarray} \label{v_weak}
    \int_{\Omega}\dfrac{1}{A(\chi)}{E}_0\varepsilon(v):\varepsilon(\hat{v})\ dx=\int_{\Gamma_N}q_{\rm out}\cdot\hat{v}\ ds\quad\forall\hat{v}\in H_D^1(\Omega)^d.
    \end{eqnarray}
    Moreover, there holds
    $$\dfrac{\partial G}{\partial \chi}(\chi)(\hat{\chi})=\int_{\Omega}A'(\chi)(\hat{\chi}){E}_0^{-1}(\sigma(\chi):\sigma(\chi)+\varrho(\chi):\varrho(\chi))\ dx.$$
\end{lemma}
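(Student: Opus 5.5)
The plan is to treat $G$ as the value function of a parametrized convex minimization problem and use an envelope (Danskin-type) argument. The first claim, the representation of $G(\chi)$ through the stresses $\sigma(\chi),\varrho(\chi)$, comes from Lemma \ref{stress_equ} applied separately to the $\sigma$- and $\varrho$-components. The minimization over $S$ decouples into two problems of the form \eqref{stress_problem}, with data $q_{\rm in}$ and $q_{\rm out}$. For $\chi\in\bar{\mathcal{U}}$ the weight $A(\chi)$ is bounded between $1/E_{\max}$ and $1/E_{\min}$. Hence the optimality system identifies the unique minimizer as $A(\chi)^{-1}E_0\varepsilon(u)$, where $u$ solves \eqref{u_weak}, and likewise for $v$ and \eqref{v_weak}. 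Existence of $u$ and $v$ follows from Lax--Milgram and Korn's inequality, uniformly in $\chi$.

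For the derivative I would avoid differentiating $\sigma(\chi)$ directly. Fix $\chi$ and a direction $\hat\chi$ with $\chi+t\hat\chi$ admissible, or work in $L^\infty$ with $t$ small. Since $A$ is affine, $A(\chi+t\hat\chi)=A(\chi)+tA'(\chi)(\hat\chi)$. The key point is that the feasible set $S$ does not depend on $\chi$. This gives two-sided bounds. Testing with the fixed competitor $(\sigma(\chi),\varrho(\chi))$ yields
\begin{equation*}
G(\chi+t\hat\chi)-G(\chi)\le t\int_\Omega A'(\chi)(\hat\chi)E_0^{-1}\big(\sigma(\chi):\sigma(\chi)+\varrho(\chi):\varrho(\chi)\big)\,dx .
\end{equation*}
Testing with $(\sigma(\chi+t\hat\chi),\varrho(\chi+t\hat\chi))$ in the problem at $\chi$ yields the reverse inequality, with $\sigma(\chi)$ replaced by $\sigma(\chi+t\hat\chi)$ on the right-hand side.

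It therefore remains to show that $\sigma(\chi+t\hat\chi)\to\sigma(\chi)$ in $L^2$ as $t\to0$, and similarly for $\varrho$. This is the main obstacle, though a mild one. I would subtract the weak formulations \eqref{u_weak} at the two parameters. Using uniform ellipticity, this gives $\|u_t-u\|_{H^1}\le Ct\|\hat\chi\|_{L^\infty}\|u\|_{H^1}$. Since $1/A$ is Lipschitz on the relevant range, it follows that $\|\sigma_t-\sigma\|_{L^2}=O(t)$. The two bounds then pinch, and the remainder is $o(t)$, in fact $O(t^2)$, uniformly over $\|\hat\chi\|_{L^\infty}\le1$.

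This argument gives Fréchet differentiability in $L^\infty$ together with the stated formula. Because the formula is linear and bounded in $\hat\chi$, it is indeed the Fréchet derivative and not merely a directional one.
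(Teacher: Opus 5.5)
Your proposal is correct, and it takes a genuinely different route from the paper.

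The paper works in displacement variables. It rewrites $G(\chi)=\int_\Omega A(\chi)^{-1}E_0(\varepsilon(u):\varepsilon(u)+\varepsilon(v):\varepsilon(v))\,dx$ as a sum of two compliances and treats each one as a PDE-constrained functional. It forms a Lagrangian with multiplier $\hat u$, identifies $\hat u=-2u$, and reads off the derivative $\int_\Omega A'(\chi)(\hat\chi)A(\chi)^{-2}E_0\varepsilon(u):\varepsilon(u)\,dx$, which it then translates into $\sigma(\chi)$. This is the formal adjoint calculus: it assumes that $\chi\mapsto u(\chi)$ is differentiable, and it gives no control of the remainder.

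You stay instead in the stress (complementary-energy) formulation, where two facts do the work:
\begin{itemize}
\item the feasible set $S$ does not depend on $\chi$;
\item the integrand is affine in $\chi$.
\end{itemize}
A Danskin-type sandwich then only needs the Lipschitz bound $\|\sigma(\chi+h)-\sigma(\chi)\|_{0,\Omega}\le C\|h\|_{L^\infty}$, and you never differentiate the state. This yields an explicit $O(\|h\|_{L^\infty}^2)$ remainder. So you obtain genuine Fr\'echet differentiability on any $L^\infty$-neighbourhood where $A$ stays bounded below, not just the formal derivative. A further bonus of your viewpoint is that $G$ is an infimum of affine functionals of $\chi$. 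The concavity asserted in Lemma~\ref{F-second-derivative} is therefore immediate, with no second derivatives or implicit function theorem needed.

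What the paper's Lagrangian route buys in return is robustness when the equilibrium constraint itself depends on the design. Examples are design-dependent loads, or the source $q(\chi)$ in the heat-transfer problem of Section~\ref{sec:heat}. There the feasible set of the stress or flux problem moves with $\chi$, so your fixed-competitor sandwich no longer applies directly, while the adjoint computation still goes through, at least formally.
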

\begin{proof}
    From the definition of $\sigma(\chi)$ and $\rho(\chi)$, $G(\chi)$ can be transformed into
    \begin{eqnarray}
    G(\chi)=\int_{\Omega}\dfrac{1}{A(\chi)}{E}_0(\varepsilon(u):\varepsilon(u)+\varepsilon(v):\varepsilon(v))\ dx.\label{G_rep}    
    \end{eqnarray}
    Since the two components of $ G(\chi) $ have identical forms, we simply consider the following functional
    $$j(\chi) = \int_{\Omega}\dfrac{1}{A(\chi)}{E}_0\varepsilon(u):\varepsilon(u)\ dx$$
    subject to
    $$ \int_{\Omega}\dfrac{1}{A(\chi)}{E}_0\varepsilon(u):\varepsilon(\hat{u})\ dx=\int_{\Gamma_N}q_{\rm in}\cdot\hat{u}\ ds\quad \forall \hat{u}\in H_D^1(\Omega)^d.$$
    Hence, its Lagrange functional reads as follows
    $$\ell(\chi,u,\hat{u})=\int_{\Omega}\dfrac{1}{A(\chi)}{E}_0\varepsilon(u):\varepsilon(u)\ dx+\int_{\Omega}\dfrac{1}{A(\chi)}{E}_0\varepsilon(u):\varepsilon(\hat{u})\ dx-\int_{\Gamma_N}q_{\rm in}\cdot\hat{u}\ ds.$$
    It is straightforward to show that $\hat u=-2u$. Therefore, 
    $$\dfrac{\partial j}{\partial \chi}(\chi)(\hat{\chi})=\int_{\Omega}\dfrac{A'(\chi)\hat{\chi}}{A(\chi)^2}E_0\varepsilon(u):\varepsilon(u)\ dx.$$
    Furthermore, since $\sigma(\chi)=\dfrac{1}{A(\chi)}{E}_0\varepsilon(u)$, it follows that
    $$\dfrac{\partial j}{\partial \chi}(\chi)(\hat{\chi})=\int_{\Omega}A'(\chi)(\hat{\chi}){E}_0^{-1}\sigma(\chi):\sigma(\chi)\ dx.$$
    Hence,
    \begin{eqnarray}
    \dfrac{\partial G}{\partial \chi}(\chi)(\hat{\chi})&=&\int_{\Omega}\dfrac{A'(\chi)(\hat{\chi})}{A(\chi)^2}{E}_0(\varepsilon(u):\varepsilon(u)+\varepsilon(v):\varepsilon(v))\ dx\nonumber\\
    &=&\int_{\Omega}A'(\chi)(\hat{\chi}){E}_0^{-1}(\sigma(\chi):\sigma(\chi)+\varrho(\chi):\varrho(\chi))\ dx.\nonumber
    \end{eqnarray} 
    This finishes the proof.
\end{proof}

From Lemma \ref{F-derivative}, the Fréchet derivative of $ L $ is obtained directly as follows:
\begin{align}
    \dfrac{\partial L}{\partial \chi}(\chi,\sigma,\varrho)(\hat{\chi}) =&\int_{\Omega}A'(\chi)(\hat{\chi}){E}_0^{-1}\sigma:\varrho\ dx +
    \lambda\Big(\int_{\Omega}A'(\chi)(\hat{\chi}){E}_0^{-1}(\sigma:\sigma+\varrho:\varrho)\ dx\notag\\
    &-\int_{\Omega}A'(\chi)(\hat{\chi}){E}_0^{-1}(\sigma(\chi):\sigma(\chi)+\varrho(\chi):\varrho(\chi))\ dx\Big).
\end{align}

In particular, if $(\sigma,\varrho)=(\sigma(\chi),\varrho(\chi))$, then
\begin{equation}\label{obj_derivative}
    \dfrac{\partial L}{\partial \chi}(\chi,\sigma(\chi),\varrho(\chi))(\hat{\chi}) =\int_{\Omega}A'(\chi)(\hat{\chi}){E}_0^{-1}\sigma(\chi):\varrho(\chi)\ dx.
\end{equation}
    
Next, we demonstrate that $ \min_{(\sigma,\varrho)\in S}\tilde{g}(\chi,\sigma,\varrho) $ is concave with respect to $ \chi $. We only need to prove the following lemma.
\begin{lemma}\label{F-second-derivative}
    The functional 
    $$g_1(\chi)=\min_{\sigma\in S_1}\int_{\Omega}E^{-1}(\chi)\sigma:\sigma\ dx$$ 
    is concave with respect to $ \chi $.
\end{lemma}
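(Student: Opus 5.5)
The plan is to write $g_1$ as a pointwise infimum of functions that are affine in $\chi$, and then use the elementary fact that an infimum of affine (hence concave) functions is concave. On the relaxed set $\bar{\mathcal{U}}$ we have $E^{-1}(\chi)=A(\chi)E_0^{-1}$, and
$$A(\chi)=\Big(\dfrac{1}{E_{\max}}-\dfrac{1}{E_{\min}}\Big)\chi+\dfrac{1}{E_{\min}}$$
is affine in $\chi$. Hence, for each fixed $\sigma\in S_1$, the map
$$\chi\mapsto \Phi_\sigma(\chi):=\int_\Omega A(\chi)E_0^{-1}\sigma:\sigma\,dx$$
is affine on the convex set $\bar{\mathcal{U}}$. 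The point is that the constraint set $S_1$ does not depend on $\chi$; this is exactly why the stress formulation is convenient here.

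Concretely, take $\chi_1,\chi_2\in\bar{\mathcal U}$ and $t\in[0,1]$, and set $\chi_t=t\chi_1+(1-t)\chi_2$. For every $\sigma\in S_1$, affinity gives
$$\Phi_\sigma(\chi_t)=t\Phi_\sigma(\chi_1)+(1-t)\Phi_\sigma(\chi_2)\ge t\,g_1(\chi_1)+(1-t)\,g_1(\chi_2).$$
Taking the infimum over $\sigma\in S_1$ yields $g_1(\chi_t)\ge t g_1(\chi_1)+(1-t)g_1(\chi_2)$, which is concavity. By the remarks after Lemma~\ref{stress_equ} the minimum is attained (it is the elastic stress), but the infimum argument does not even need attainment.

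As a cross-check, I would also use Lemma~\ref{F-derivative}. For each fixed $\sigma$ the second derivative of $\Phi_\sigma$ in $\chi$ vanishes, so any curvature of $g_1$ comes only from $\chi$-dependence of the minimizer $\sigma(\chi)$. Such dependence can only lower $g_1$, which is the envelope-theorem view of the same inequality.

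I expect the main obstacle to be a technical one: checking that $\Phi_\sigma(\chi)$ is finite and well defined for $\chi\in\bar{\mathcal U}$. Since $0\le\chi\le1$, $A(\chi)$ lies between $1/E_{\max}$ and $1/E_{\min}$, so it is bounded and positive. Together with $\sigma\in L^2$ this makes every integral finite. With that settled, the infimum-of-affine argument goes through without further work.
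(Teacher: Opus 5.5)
Your proof is correct, but it takes a different route from the paper. The paper passes to the displacement representation $g_1(\chi)=\int_\Omega \frac{1}{A(\chi)}E_0\varepsilon(u):\varepsilon(u)\,dx$. It differentiates the state equation \eqref{u_weak} twice in $\chi$ via the implicit function theorem and arrives at
\[
\frac{\partial^2 g_1}{\partial\chi^2}(\hat\chi,\hat\chi)=-2\int_\Omega\frac{[A'(\chi)\hat\chi]^2}{A(\chi)^3}E_0\varepsilon(u):\varepsilon(u)\,dx+2\int_\Omega\frac{1}{A(\chi)}E_0\varepsilon(u'):\varepsilon(u')\,dx.
\]
It then settles the sign with the tangent identity \eqref{state_tangent} and Cauchy--Schwarz.

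You stay in the stress (complementary energy) formulation instead. There the feasible set $S_1$ does not depend on $\chi$ and the integrand is affine in $\chi$, so $g_1$ is an infimum of affine functionals and concavity is immediate.

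Your route is more elementary and more robust. It needs no differentiability of the control-to-state map $\chi\mapsto u(\chi)$ and no attainment of the minimum. It also avoids the step from a sign of the second variation back to concavity along segments. It applies verbatim to $G(\chi)$ on $S$, and to any interpolation $E^{-1}(\chi)=a(\chi)E_0^{-1}$ with $a$ concave, since $E_0^{-1}\sigma:\sigma\ge 0$. It also makes transparent why the stress formulation produces this convexity structure.

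What the paper's computation offers in return is an explicit formula for the curvature. Your envelope-theorem cross-check can in fact be made exact. Differentiate the optimality condition for $\sigma(\chi)$ and test with $\sigma'=\frac{\partial\sigma}{\partial\chi}(\chi)(\hat\chi)$, which lies in the $\chi$-independent tangent space of $S_1$. This gives $\frac{\partial^2 g_1}{\partial\chi^2}(\hat\chi,\hat\chi)=-2\int_\Omega A(\chi)E_0^{-1}\sigma':\sigma'\,dx$. That expression coincides with the paper's formula above and displays its sign without any Cauchy--Schwarz argument.
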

\begin{proof}
     It follows from the proof of Lemma \ref{F-derivative} that 
    \begin{align*}
g_1(\chi)=\int_{\Omega}\dfrac{1}{A(\chi)}{E}_0\varepsilon(u):\varepsilon(u)\ dx,
    \end{align*}
    where $u$ solves the state equation \eqref{u_weak}. Here, we do not employ the Lagrangian approach, but instead use the implicit function theorem. Denoting \( u' = \dfrac{\partial u}{\partial\chi}(\hat{\chi}) \) and \( u'' = \dfrac{\partial^2 u}{\partial\chi^2}(\hat{\chi}^2) \), by the chain rule, we can compute
    \begin{align*}
        \dfrac{\partial^2 g_1}{\partial\chi^{2}}(\hat{\chi}^2)=& 2\int_{\Omega}\dfrac{[A'(\chi)(\hat{\chi})]^2}{A(\chi)^{3}}{E}_0\varepsilon(u):\varepsilon(u)\ dx-4\int_{\Omega}\dfrac{A'(\chi)(\hat{\chi})}{A(\chi)^2}{E}_0\varepsilon(u):\varepsilon(u')\ dx\\
        &+2\int_{\Omega}\dfrac{1}{A(\chi)}{E}_0\varepsilon(u'):\varepsilon(u')\ dx+2\int_{\Omega}\dfrac{1}{A(\chi)}{E}_0\varepsilon(u):\varepsilon(u'')\ dx.
    \end{align*}
    Differentiating with respect to $\chi$ in \eqref{u_weak}, we obtain 
    \begin{equation}
        -\int_{\Omega}\dfrac{A'(\chi)(\hat{\chi})}{A(\chi)^{2}}{E}_0\varepsilon(u):\varepsilon(\hat{u})\ dx+\int_{\Omega}\dfrac{1}{A(\chi)}{E}_0\varepsilon(u'):\varepsilon(\hat{u})\ dx=0,\nonumber
    \end{equation}
    \begin{align*}
        2\int_{\Omega}\dfrac{[A'(\chi)(\hat{\chi})]^2}{A(\chi)^{3}}{E}_0\varepsilon(u):\varepsilon(\hat{u})\ dx-2\int_{\Omega}\dfrac{A'(\chi)(\hat{\chi})}{A(\chi)^2}{E}_0\varepsilon(u'):\varepsilon(\hat{u})\ dx
        +\int_{\Omega}\dfrac{1}{A(\chi)}{E}_0\varepsilon(u''):\varepsilon(\hat{u})\ dx=0
    \end{align*}
   for any $\hat{u}\in H_D^1(\Omega)^d$.
    Hence, setting $\hat{u} = u'$ in the first equation yields
    \begin{equation}\label{state_tangent}
        \int_{\Omega}\dfrac{1}{A(\chi)}{E}_0\varepsilon(u'):\varepsilon(u')\ dx=\int_{\Omega}\dfrac{A'(\chi)(\hat{\chi})}{A(\chi)^{2}}{E}_0\varepsilon(u):\varepsilon(u')\ dx,
    \end{equation} 
    while setting $\hat{u} = u$ in the second equation gives
    \begin{align*}
        \int_{\Omega}\dfrac{1}{A(\chi)}{E}_0\varepsilon(u''):\varepsilon(\hat{u})\ dx
        =-2\int_{\Omega}\dfrac{[A'(\chi)(\hat{\chi})]^2}{A(\chi)^{3}}{E}_0\varepsilon(u):\varepsilon(u)\ dx+2\int_{\Omega}\dfrac{A'(\chi)(\hat{\chi})}{A(\chi)^2}{E}_0\varepsilon(u'):\varepsilon(u)\ dx.
    \end{align*}
    By applying the Cauchy-Schwarz inequality in \eqref{state_tangent}, we obtain
    \begin{equation}
        \int_{\Omega}\dfrac{1}{A(\chi)}{E}_0\varepsilon(u'):\varepsilon(u')\ dx\leq \int_{\Omega}\dfrac{[A'(\chi)(\hat{\chi})]^2}{A(\chi)^{3}}{E}_0\varepsilon(u):\varepsilon(u)\ dx.
    \end{equation}
    Therefore
    \begin{align*}
      \dfrac{\partial^2 g_1(\chi,u,v)}{\partial \chi^2}(\hat{\chi})(\hat{\chi})=-2\int_{\Omega}\dfrac{[A'(\chi)(\hat{\chi})]^2}{A(\chi)^{3}}{E}_0\varepsilon(u):\varepsilon(u)\ dx+2\int_{\Omega}\dfrac{1}{A(\chi)}{E}_0\varepsilon(u'):\varepsilon(u')\ dx\leq 0.
    \end{align*}
This finishes the proof.
\end{proof}

Since $ \tilde{J}$ and $\tilde{g} $ are linear functionals with respect to $\chi$, it follows from Lemma \ref{F-second-derivative} that $ L(\chi,\sigma,\rho)=\tilde{J}(\chi,\sigma,\rho)+\lambda(\tilde{g}(\chi,\sigma,\rho)-\min_{(\hat{\sigma},\hat{\varrho})\in S}\tilde{g}(\chi,\hat{\sigma},\hat{\rho})) $ is convex with respect to $ \chi $.

To ensure the existence of solutions, we need to add a perimeter penalty term to \( L \). We consider the regularized problem 
\begin{equation}\label{L_permeter}
    L^{\gamma}(\chi,\sigma,\varrho)=L(\chi,\sigma,\varrho)+\gamma\|\chi\|_{TV},
\end{equation}
where \(\|\cdot\|_{TV}\) represents the total variation norm in the space of functions with bounded variation. We remark that the TV-norm  is computationally challenging, so  one usually uses an approximate perimeter regularization for the implementation. To maintain the convexity of the objective functional, we introduce a perimeter approximation, resulting in the following approximate objective functional:
\begin{equation}\label{L_permeter_approx}
    L^{\gamma,\epsilon}(\chi,\sigma,\varrho)=L(\chi^\epsilon,\sigma,\varrho)+\dfrac{\gamma}{\epsilon}P_{G_\epsilon}(\chi),
\end{equation}
where \(\chi^\epsilon := G_\epsilon * \chi\) and the family \(\{G_\epsilon\in C_c^\infty(\mathbb{R}^d)\}_\epsilon\) is a Dirac sequence.

Specifically, we assume that \( G \) is a smooth kernel satisfying \( \int_{\mathbb{R}^d} G(x) \, dx = 1 \). We define \( G_\epsilon(x) = \dfrac{1}{\epsilon^d} G\left( \dfrac{x}{\epsilon} \right) \), which is introduced for the need of theoretical analysis, and
\begin{equation}
     P_{G_\epsilon}(\chi):=\dfrac{1}{2}\int_{\Omega}\int_{\Omega}G_\epsilon(x-y)|\chi(x)-\chi(y)|\ dxdy.
\end{equation}
 It can be shown that the nonlocal convex approximation $\dfrac{C}{\epsilon}P_{G_\epsilon}(\chi)$, with \( C>0 \) a constant, $\Gamma$-converges to $\|\chi\|_{TV}$, we refer to \cite{Maz} for more details. We can provide a subderivative of this functional as follows:
\begin{equation}
    \partial P_{G_\epsilon}(\chi) = \Big\{ g \in L^\infty(\Omega) \mid \langle g,\tilde{\chi}\rangle =  \int_\Omega \Big(\int_\Omega G_\epsilon(x-y) \xi(x,y) dy\Big)\tilde{\chi}(x)\ dx, \ \xi(x,y) \in \text{sign}(\chi(x) - \chi(y)) \Big\}.
\end{equation}
 Consequently, we obtain an optimization problem \( P \) that can be executed efficiently
 \begin{equation}\label{problem_P}
    \min_{(\chi,\sigma,\rho)\in \mathcal{U}\times S}L^{\gamma,\epsilon}(\chi,\sigma,\varrho)=\int_\Omega A(\chi^\epsilon)E^{-1}_0\sigma:\varrho\ dx + \lambda\Big(\int_\Omega A(\chi^\epsilon)E^{-1}_0(|\sigma|^2+|\varrho|^2-|\sigma^*|^2-|\varrho^*|^2)\ dx\Big) +\dfrac{\gamma}{\epsilon} P_{G_\epsilon}(\chi).
\end{equation}

 We emphasize again that the inner minimizer of this double minimization problem can be obtained by solving the corresponding linear elasticity equation, while the outer minimization is a nonconvex optimization problem. However, we have established that the objective functional $L^{\gamma,\epsilon}$ is convex on \(\bar{\mathcal{U}}\). Therefore, we adopt the projected gradient algorithm. 
 
 Let \(\chi_k\) be the current iteration point and \((\sigma_k, \varrho_k)\) be the solution to the corresponding inner minimization problem. We first perform a gradient descent update in \(L^\infty(\Omega)\): 
 \begin{equation}\label{proximal_update}
     \bar\chi_{k+1} = \arg\min_{\chi\in L^\infty(\Omega)}\Big\{L^{\gamma,\epsilon}(\chi_k,\sigma_k,\varrho_k)+\Big\langle\dfrac{\partial L^{\gamma,\epsilon}}{\partial \chi}(\chi_k,\sigma_k,\varrho_k),\chi-\chi_k\Big\rangle +\dfrac{1}{2r}\|\chi-\chi_k\|_{0,\Omega}^2\Big\}.
 \end{equation}
 Then, we project the resulting minimizer \(\bar{\chi}_{k+1}\) onto \(\mathcal{U}\)
 \begin{equation}\label{project_update}
     \chi_{k+1}=\mathcal{P}_\mathcal{U}(\bar\chi_{k+1}).
 \end{equation}
In the following, we derive the explicit computational forms for the update of the gradient descent \eqref{proximal_update} and the projection \eqref{project_update}. First, for the update of the gradient descent, the key lies in the selection of the subgradient. Here, we choose
\begin{equation}
\langle g_k,\tilde{\chi}\rangle =  \int_\Omega \Big(\int_\Omega G_\epsilon(x-y) (\chi_k(x)-\chi_k(y))\ dy\Big)\tilde{\chi}(x)\ dx,\nonumber
\end{equation}
so that $g_k\in \partial P_{G_\epsilon}(\chi)$. Recalling \eqref{obj_derivative},  the calculation proceeds as follows:
\begin{align*}
    &\Big\langle \dfrac{\partial L^{\gamma,\epsilon}}{\partial \chi}(\chi_k,\sigma_k,\varrho_k),\chi-\chi_k\Big\rangle\\
    &=\Big\langle \dfrac{\partial L(\chi_k^\epsilon,\sigma_k,\varrho_k)}{\partial\chi},\chi-\chi_k\Big\rangle + \dfrac{\gamma}{\epsilon}\langle g_k,\chi-\chi_k\rangle\\
    &=\int_\Omega A'(G_\epsilon*(\chi-\chi_k))E_0^{-1}\sigma_k:\varrho_k\ dx + \dfrac{\gamma}{\epsilon}\int_\Omega \Big(\int_\Omega G_\epsilon(x-y) (\chi_k(x)-\chi_k(y)) dy\Big)(\chi(x)-\chi_k(x))\ dx\\
    &=\int_\Omega(\chi-\chi_k)\Big(\dfrac{1}{E_{\max}}-\dfrac{1}{E_{\min}}\Big)G_\epsilon*(E_0^{-1}\sigma_k:\varrho_k)\ dx +\dfrac{\gamma}{\epsilon}\int_\Omega (\chi-\chi_k)(\chi_k-G_\epsilon*\chi_k)\ dx.
\end{align*}
The last equality is obtained by extending the function to zero and then using the properties of convolution. Hence, we have
\begin{equation}\label{proxiaml_update_exp}
    \bar\chi_{k+1}=\chi_k - r\Big( \Big(\dfrac{1}{E_{\max}}-\dfrac{1}{E_{\min}}\Big)G_\epsilon*(E_0^{-1}\sigma_k:\varrho_k) +\dfrac{\gamma}{\epsilon}(\chi_k-G_\epsilon*\chi_k)\Big).
\end{equation}
Next, we address the computational issue of the projection. The key aspect of the projection lies in the choice of norm under which the projection is defined, as different norms lead to different projection results, directly impacting our study of convergence. Here, we use the \( L^1 \) norm.

It is easy to see that for any \( L^p \) norm, the projection result is an element defined as follows: Let \( c_{k+1} = \inf \{ c \in \mathbb{R} : \mu(\{ x : \bar{\chi}_{k+1}(x) > c \}) \leq \beta |\Omega| \} \), then
\begin{equation}\label{project_update_exp}
    \chi_{k+1}(x) =
    \begin{cases} 
    1 & \text{if } \bar{\chi}_{k+1}(x) > c_{k+1}, \\
    0 & \text{if } \bar{\chi}_{k+1}(x) \leq c_{k+1},
    \end{cases}
\end{equation}
where \(\mu\) is a measure of \(\mathbb{R}^d\).

With the update formulas \eqref{proxiaml_update_exp} and \eqref{project_update_exp}, we can easily obtain an estimate \(\chi_{k+1}\). However, the step size \( r \) should be carefully chosen. Therefore, we need to adjust the size of \( r \) to obtain a suitable \(\chi_{k+1}\). Here, we do not adopt the classical Wolfe conditions, as the outer optimization problem at the discrete level is a 0-1 programming problem. It is sufficient to ensure that the objective function decreases. Thus, we use a binary search to find an appropriate \( r \), thereby completing an iteration step.

Clearly, the choice of the search interval for \( r \) significantly affects the number of iterations, which in turn affects the efficiency of the algorithm. By carefully examining the update and projection steps of the projected gradient algorithm, we observe that if the update does not alter the relative order of the points where \(\chi_k\) takes values 1 and 0, then \(\bar{\chi}_{k+1} = \chi_k\). Consequently, we can choose
$$ r_{\min} = \dfrac{1}{2} \sup_{x\in\Omega}\Big( \Big(\dfrac{1}{E_{\max}}-\dfrac{1}{E_{\min}}\Big)G_\epsilon*(E_0^{-1}\sigma_k:\varrho_k) +\dfrac{\gamma}{\epsilon}(\chi_k-G_\epsilon*\chi_k)\Big)^{-1},$$
while the choice of \( r_{\max} \) is relatively arbitrary, as long as \( r_{\max} > r_{\min} \). The \(\bar{\chi}_{k+1}\) obtained from \( r_{\max} \) falls into one of the following three cases: 
\begin{enumerate}
    \item If \(\bar{\chi}_{k+1}\) reduces the objective function, the search can be terminated and we set \(\chi_{k+1} \leftarrow \bar{\chi}_{k+1}\); 
    \item If \(\bar{\chi}_{k+1} = \chi_k\), we update \( r_{\min} \leftarrow r_{\max} \) and select a larger \( r_{\max} \); 
    \item If \(\bar{\chi}_{k+1}\) increases the objective function, it indicates that \( r_{\max} \) is too large, and we can perform a line search within this interval.
\end{enumerate}

\subsection{Numerical implementation of the algorithm}
In the following, we present the numerical implementation of the algorithm in pseudo-code form.

\begin{algorithm}
\caption{The step-size line search algorithm}
\label{alg:coefficient_line_search}
\begin{algorithmic}[1]
\STATE Initialize $r_{\min},r_{\max},\Delta$, and let $r = \dfrac{r_{\max}+r_{\min}}{2}$.
\WHILE{1}
    \STATE Compute $\bar\chi$ using \eqref{proxiaml_update_exp}.
    \STATE Compute $\bar{\chi}_{k+1}$ using \eqref{project_update_exp}.
    \STATE Compute $L^{\gamma,\epsilon}(\bar{\chi}_{k+1},\sigma_k,\varrho_k)$
    \STATE \IF{$L^{\gamma,\epsilon}(\bar{\chi}_{k+1},\sigma_k,\varrho_k)> L^{\gamma,\epsilon}(\chi_k,\sigma_k,\varrho_k)$}
    \STATE  $r_{\max} \leftarrow r$.
    \STATE\IF{$\|\bar{\chi}_{k+1}-\chi_k\|_1 \leq \Delta$}
    \STATE $\chi_{k+1} \leftarrow \chi_k$, break;
    \STATE\ENDIF
    \STATE \ELSIF{$L^{\gamma,\epsilon}(\bar{\chi}_{k+1},\sigma_k,\varrho_k)< L^{\gamma,\epsilon}(\chi_k,\sigma_k,\varrho_k)$}
    \STATE $\chi_{k+1} \leftarrow \bar{\chi}_{k+1}$, break;
    \STATE \ELSIF{$L^{\gamma,\epsilon}(\bar{\chi}_{k+1},\sigma_k,\varrho_k)= L^{\gamma,\epsilon}(\chi_k,\sigma_k,\varrho_k)$ and $\|\bar{\chi}_{k+1}-\chi_k\|_1 > \Delta$}
    \STATE $r_{\max} \leftarrow r$.
    \STATE \ENDIF
    \STATE\IF{$\bar{\chi}_{k+1} == \chi^k$} 
    \STATE $r_{\min} \leftarrow r$.
    \STATE \ENDIF
    \STATE $r=\dfrac{r_{\max}+r_{\min}}{2}$.
\ENDWHILE
\RETURN New density \( \chi_{k+1} \) and function value $L^{\gamma,\epsilon}(\chi_{k+1},\sigma_k,\varrho_k)$.
\end{algorithmic}
\end{algorithm}

It should be noted that we introduce a parameter \(\Delta\) here, which serves to terminate the line search and return \(\chi_k\) when a change of \(\chi_k\) by \(\Delta\) (in the sense of \(L^1\)) along the descent direction leads to an increase in the value of the objective function. This effectively terminates the entire optimization algorithm. At the discrete level, we can choose \(\Delta = C \mu(h)\), where \(h\) represents the mesh size of finite elements.

\begin{algorithm}
\caption{Gradient desecent algorithm for the compliant mechanisms problem}
\label{alg:monotonic_descen_with_penlty}
\begin{algorithmic}[1]
\STATE Initialize $k=0,\chi_k$ (The initial value is typically set to a uniform distribution with $ \chi_k = \beta $),  and parameters $\lambda,\gamma,\epsilon,\Delta$.
\WHILE{\( \chi_k \) not converged}
    \STATE Solve  $a(\chi_k^\epsilon;u,\hat{u})=\ell_{\rm in}(\hat{u}),\quad\forall\hat{u}\in H_D^1(\Omega)^d$ and $a(\chi_k^\epsilon;v,\hat{v})=\ell_{\rm out}(\hat{u}),\quad\forall\hat{v}\in H_D^1(\Omega)^d$.
    \STATE Compute $\Big(\dfrac{1}{E_{\max}}-\dfrac{1}{E_{\min}}\Big)G_\epsilon*(E_0^{-1}\sigma_k:\varrho_k) +\dfrac{\gamma}{\epsilon}(\chi_k-G_\epsilon*\chi_k)$.
    \STATE Update \( \chi_{k}\) using Algorithm \ref{alg:coefficient_line_search}.
    \STATE $k\leftarrow k+1$.
\ENDWHILE
\RETURN Optimal density \( \chi^* \) and function value $L^{\gamma,\epsilon}(\chi^*,\sigma^*,\varrho^*)$.
\end{algorithmic}
\end{algorithm}

%\newpage
\section{Theoretical Analysis of Convergence}\label{sec:theory}
 In this section, we investigate the convergence of the proposed algorithm from an abstract perspective. First, we standardize the notation: variables with subscript \( n \) represent those in the discrete problem, which vary with the grid size \( h_n \), while variables without subscript \( n \) denote those in the continuous problem. To clearly present our analysis process and for the convenience of notation, we restate the three optimization problems.  We introduce the notation \( X_n = \mathcal{U}_n \times S_n\) and \( X = \mathcal{U} \times S \), where \( \mathcal{U}_n \subset \mathcal{U}\) consists of piecewise constant finite element functions and \( S_n \) denotes the finite element space that is conforming. 
 
\subsection{Problem description}
The origin problem $Q$ is defined as follows:
\begin{equation}
    \min_{(\chi,\sigma,\varrho)\in X}L^{\gamma,0}(\chi,\sigma,\varrho)= L^{0}(\chi,\sigma,\varrho)+\dfrac{\gamma}{C_G} \|\chi\|_{TV},
\end{equation}
where $C_G:=\lim_{\epsilon\downarrow0}\dfrac{\|\chi\|_{TV}}{ P_{G_\epsilon}(\chi)/\epsilon}$, and
$$L^{0}(\chi,\sigma,\varrho):=\int_\Omega A(\chi)E_0^{-1}\sigma:\varrho\ dx + \lambda\Big(\int_\Omega A(\chi)E_0^{-1}(|\sigma|^2+|\varrho|^2-|\sigma^*|^2-|\varrho^*|^2)\ dx\Big).$$
Here $(\sigma^*,\varrho^*):=(\dfrac{1}{A(\chi)}E_0\varepsilon(u^*),\dfrac{1}{A(\chi)}E_0\varepsilon(u^*))$ where
$$(u^*,v^*)=\arg\min_{(u,v)\in [H_D^1(\Omega)^d]^2}\Big(\dfrac{1}{2}(a(\chi;u,u)+a(\chi;v,v)-\ell_{\rm in}(u)-\ell_{\rm out}(v)\Big).$$
The functions \( (\sigma,\varrho)\) are independent of $\chi$, while \( (\sigma^*,\varrho^*) \) depend on \( \chi \) (in our application, they are solutions to PDEs).

The continuous problem $P$ is given by
\begin{equation}
    \min_{(\chi,\sigma,\varrho)\in X}L^{\gamma,\epsilon_k}(\chi,\sigma,\varrho)=L^{\epsilon_k}(\chi^{\epsilon_k},\sigma,\varrho) +\dfrac{\gamma}{\epsilon_k} P_{G_{\epsilon_k}}(\chi),
\end{equation}
where \(\chi^{\epsilon_k} := G_{\epsilon_k} * \chi\), $\epsilon_k\rightarrow 0$ as $k\rightarrow +\infty$ and
$$L^{\epsilon_k}(\chi^{\epsilon_k},\sigma,\varrho):=\int_\Omega A(\chi^{\epsilon_k})E_0^{-1}\sigma:\varrho\ dx + \lambda\Big(\int_\Omega A(\chi^{\epsilon_k})E_0^{-1}(|\sigma|^2+|\varrho|^2-|\sigma^*_k|^2-|\varrho^*_k|^2)\ dx\Big).$$
Here $(\sigma^*_k,\varrho^*_k):=(\dfrac{1}{A(\chi^{\epsilon_k})}E_0\varepsilon(u^*_k),\dfrac{1}{A(\chi^{\epsilon_k})}E_0\varepsilon(v^*_k))$ where
$$(u^*_k,v^*_k)=\arg\min_{(u,v)\in [H_D^1(\Omega)^d]^2}\Big(\dfrac{1}{2}(a(\chi^{\epsilon_k};u,u)+a(\chi^{\epsilon_k};v,v)-\ell_{\rm in}(u)-\ell_{\rm out}(v)\Big).$$

The discrete problem $P_n$ is defined as follows: 
\begin{align}
    \min_{(\chi_n,\sigma_n,\varrho_n)\in X_n}L_n^{\gamma,\epsilon_k}(\chi_n,\sigma_n,\varrho_n)=L_n^{\epsilon_k}(\chi_n^{\epsilon_k},\sigma_n,\varrho_n)+\dfrac{\gamma}{\epsilon_k} P_{G_{\epsilon_k}}(\chi_n),
\end{align}
where
$$L_n^{\epsilon_k}(\chi_n^{\epsilon_k},\sigma_n,\varrho_n):=\int_\Omega A(\chi_n^{\epsilon_k})E_0^{-1}\sigma_n:\varrho_n\ dx + \lambda\Big(\int_\Omega A(\chi_n^{\epsilon_k})E_0^{-1}(|\sigma_n|^2+|\varrho_n|^2-|\sigma_n^*|^2-|\varrho_n^*|^2)\ dx\Big).$$
Here $(\sigma^*_n,\varrho^*_n):=(\dfrac{1}{A(\chi^{\epsilon_k})}E_0\varepsilon(u^*_n),\dfrac{1}{A(\chi^{\epsilon_k})}E_0\varepsilon(v^*_n))$ where
$$(u^*_n,v^*_n)=\arg\min_{(u,v)\in V_n\times V_n}\Big(\dfrac{1}{2}(a(\chi^{\epsilon_k};u,u)+a(\chi^{\epsilon_k};v,v)-\ell_{\rm in}(u)-\ell_{\rm out}(v)\Big),$$
$V_n\subset (H_D^1(\Omega))^d$ denotes the $H^1$-conforming finite element space for  displacement. The functions \( \sigma_n^* \) and \( \varrho_n^* \) depend on $\chi_n^{\epsilon_k}$ and thus \( \chi_n \) (in our application, they are FEM solutions of PDEs).

\subsection{Mathematical tools}In this subsection, we collect some known results from the literature which will be used in our following convergence analysis.

\begin{definition}[{\cite[Definition 2.3, Chapter 2, Page 9]{Andrea}}]
    The characterization of \(\Gamma\)-convergence is as follows: 
    \begin{enumerate}
        \item \textbf{Liminf inequality (lower semicontinuity)}: For every $ u \in X $ and every sequence $ \{u_n\} \subset X $ such that $ u_n \to u $ in $ X $,
        $$\liminf_{n \to +\infty} L_n(u_n) \geq L(u).$$
        \item \textbf{Limsup inequality (recovery sequence)}: For every $ u \in X $, there exists a sequence $ \{u_n\} \subset X $ such that $ u_n \to u $ in $ X $ and
        $$\limsup_{n \to +\infty} L_n(u_n) \leq L(u).$$
    \end{enumerate}
\end{definition}
\begin{definition}
    Let \(\{f_n\},f: \mathcal{X\to Y}\). We say that \(f_n\) converges continuously to \(f\), denoted by $f_n\stackrel{c}{\longrightarrow}f$, if for any \(x_n \stackrel{\mathcal{X}}{\to} x\), one has 
    $$\|f_n(x_n) - f(x)\|_\mathcal{Y} \to 0 \quad as\quad n \to \infty.$$
\end{definition}
\begin{definition}
    Let $\{f_n:\mathcal{X}_n\to \mathcal{Y}\}$ for $\mathcal{X}_n\subset \mathcal{X}$ and $f: \mathcal{X\to Y}$. We say that \(f_n\) sub-continuously converges to \(f\), denoted by $f_n\stackrel{sub-c}{\longrightarrow}f$, if for any \(x_n\in\mathcal{X}_n \stackrel{\mathcal{X}}{\longrightarrow} x\), one has 
    $$\|f_n(x_n) - f(x)\|_\mathcal{Y} \to 0 \quad as\quad n \to \infty.$$
    \end{definition}
\begin{lemma}[{\cite[Lemma 1.4, Chapter 1, Page 5]{Hajer}}]\label{Young_inequality}
 For any $f\in L^q(\Omega)$ and $g\in L^r(\Omega)$, we have the following Young's convolution inequality
\[
\|f * g\|_{0,p,\Omega} \leq \|f\|_{0,q,\Omega} \|g\|_{0,r,\Omega},
\]
where \( p, q, r\in [1,+\infty] \) satisfy \( \frac{1}{p} + 1 = \frac{1}{q} + \frac{1}{r} \).
\end{lemma}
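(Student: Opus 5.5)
The plan is to prove Young's convolution inequality by the standard interpolation-free route: a three-factor Hölder inequality applied pointwise to the convolution integral. Throughout, $f$ and $g$ are extended by zero outside $\Omega$. This extension does not change the $L^q(\Omega)$ and $L^r(\Omega)$ norms, and the $L^p(\Omega)$ norm of $f*g$ is bounded by its $L^p(\mathbb{R}^d)$ norm. So it suffices to prove $\|f*g\|_{L^p(\mathbb{R}^d)}\le \|f\|_{L^q}\|g\|_{L^r}$ for functions on $\mathbb{R}^d$. Replacing $f,g$ by $|f|,|g|$ only increases $|f*g|$ pointwise, so we may also assume $f,g\ge 0$.

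First dispose of the degenerate exponents. If $p=\infty$, then $1/q+1/r=1$, and Hölder's inequality in $y$ gives $|f*g(x)|\le\|f\|_q\|g(x-\cdot)\|_r=\|f\|_q\|g\|_r$, using translation and reflection invariance of Lebesgue measure. If $q=1$ (so $r=p$), or symmetrically $r=1$ (so $q=p$), apply Minkowski's integral inequality: $\|\int f(y)g(\cdot-y)\,dy\|_p\le\int |f(y)|\,\|g(\cdot-y)\|_p\,dy=\|f\|_1\|g\|_p$.

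For the general case $1<q,r<p<\infty$, write the integrand as a product of three factors:
\[
f(y)g(x-y)=\big(f(y)^{q}g(x-y)^{r}\big)^{1/p}\, f(y)^{q(1/q-1/p)}\, g(x-y)^{r(1/r-1/p)}.
\]
Apply Hölder's inequality with exponents $p$, $s_1=(1/q-1/p)^{-1}$ and $s_2=(1/r-1/p)^{-1}$. These satisfy $1/p+1/s_1+1/s_2=1/q+1/r-1/p=1$ because of the hypothesis $1/p+1=1/q+1/r$. The result is
\[
|f*g(x)|\le\Big(\int f(y)^q g(x-y)^r\,dy\Big)^{1/p}\|f\|_q^{1-q/p}\|g\|_r^{1-r/p}.
\]
Raising to the power $p$, integrating in $x$ and using Fubini–Tonelli (legitimate since the integrand is nonnegative) gives $\int\!\!\int f(y)^q g(x-y)^r\,dy\,dx=\|f\|_q^q\|g\|_r^r$. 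Hence $\|f*g\|_p^p\le\|f\|_q^{p}\|g\|_r^{p}$, which is the claim.

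The only real obstacle is the exponent bookkeeping in the three-factor Hölder step: checking that the chosen powers reproduce $f(y)g(x-y)$ exactly and that the three reciprocal exponents sum to one. Measurability of $x\mapsto f*g(x)$, and its finiteness almost everywhere, come out of the same Tonelli computation.
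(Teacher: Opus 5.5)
Your proof is correct and complete. The case split ($p=\infty$; $q=1$ or $r=1$; otherwise necessarily $1<q,r<p<\infty$) covers every admissible triple of exponents. The three-factor H\"older splitting reproduces $f(y)g(x-y)$ exactly, with reciprocal exponents $\frac1p+\bigl(\frac1q-\frac1p\bigr)+\bigl(\frac1r-\frac1p\bigr)=1$. Tonelli then gives $\|f*g\|_p^p\le\|f\|_q^p\|g\|_r^p$ together with a.e.\ finiteness and measurability.

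The paper gives no argument of its own for this lemma; it only cites a textbook result. So there is no route to compare against, and your elementary, interpolation-free proof supplies what the citation leaves out. Your zero-extension convention is the reading under which the $\Omega$-norms in the statement are legitimate. It is also the convention the paper itself invokes when it manipulates $G_\epsilon*\chi$ on $\Omega$.
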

\begin{lemma}[{\cite[Theorem 4.22, Chapter 4, Page 109]{Brezis2010}}]
    \label{mollifier_convergence}
    Let \( \rho_\epsilon(x) = \epsilon^{-d} \rho(x/\epsilon) \) be a mollifier in \( \mathbb{R}^d \), where \( \rho \in C_c^\infty(\mathbb{R}^d) \), \( \rho \geq 0 \), \( \int_{\mathbb{R}^d} \rho(x) \, dx = 1 \), and \( \rho \) is radial (e.g., a Gaussian kernel \( \rho(x) = (2\pi)^{-d/2} \exp(-|x|^2/2) \)). For any \( f \in L^p(\mathbb{R}^d) \), \( 1 \leq p < \infty \), the convolution \( \rho_\epsilon * f \) converges to \( f \) in \( L^p(\mathbb{R}^d) \):
    \[
    \|\rho_\epsilon * f - f\|_{L^p(\mathbb{R}^d)} \to 0 \quad \text{as} \quad \epsilon \to 0.
    \]
    This result extends to bounded domains \( \Omega \subset \mathbb{R}^d \) with Lipschitz boundary by zero extension of \( f\) to \( \mathbb{R}^d \).
\end{lemma}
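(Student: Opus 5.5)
The plan is the classical density argument, which reduces the statement to the case of continuous compactly supported functions and then transfers it to general $f\in L^p$ using Young's inequality from Lemma \ref{Young_inequality}. Throughout I use that $\rho\ge 0$ and $\int\rho=1$ give $\|\rho_\epsilon\|_{L^1(\mathbb{R}^d)}=1$ for every $\epsilon>0$.

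First I treat $g\in C_c(\mathbb{R}^d)$. Since $\int\rho_\epsilon=1$, I can write
\[
(\rho_\epsilon*g)(x)-g(x)=\int_{\mathbb{R}^d}\rho_\epsilon(y)\,\big(g(x-y)-g(x)\big)\,dy .
\]
If $\operatorname{supp}\rho\subset B_R$, then $\rho_\epsilon$ is supported in $B_{\epsilon R}$. Uniform continuity of $g$ gives $\sup_x|\rho_\epsilon*g-g|\to 0$. Moreover, for $\epsilon\le 1$ all the functions $\rho_\epsilon*g-g$ are supported in the fixed compact set $\operatorname{supp}g+\overline{B_R}$. Uniform convergence on a set of finite measure then yields convergence in $L^p$.

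For the Gaussian example, where $\rho$ is not compactly supported, I would split the integral into $|y|<\delta$ and $|y|\ge\delta$. The near part is at most $\omega_g(\delta)$, the modulus of continuity of $g$. The far part is at most $2\|g\|_\infty\int_{|z|\ge\delta/\epsilon}\rho(z)\,dz$, which tends to $0$ as $\epsilon\to0$. This gives uniform convergence again. For the $L^p$ bound I would use Minkowski's integral inequality in the form
\[
\|\rho_\epsilon*g-g\|_p\le\int_{\mathbb{R}^d}\rho_\epsilon(y)\,\|g(\cdot-y)-g\|_p\,dy .
\]
The translation modulus $\|g(\cdot-y)-g\|_p$ is bounded by $2\|g\|_p$ and tends to $0$ as $y\to 0$, so the same near/far splitting applies. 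I regard this non-compact-support tail estimate as the only mildly delicate step; the Minkowski form handles it most cleanly and covers both cases at once.

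Next I take general $f\in L^p(\mathbb{R}^d)$ with $1\le p<\infty$. Given $\eta>0$, I choose $g\in C_c(\mathbb{R}^d)$ with $\|f-g\|_p<\eta$, which is possible by density of $C_c$ in $L^p$ for $p<\infty$. By Lemma \ref{Young_inequality} with $q=1$ and $r=p$,
\[
\|\rho_\epsilon*(f-g)\|_p\le\|\rho_\epsilon\|_1\,\|f-g\|_p<\eta .
\]
The triangle inequality then gives
\[
\|\rho_\epsilon*f-f\|_p\le \|\rho_\epsilon*(f-g)\|_p+\|\rho_\epsilon*g-g\|_p+\|g-f\|_p<2\eta+\|\rho_\epsilon*g-g\|_p .
\]
Taking $\limsup_{\epsilon\to0}$ and using the first step gives a bound of $2\eta$. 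Since $\eta>0$ is arbitrary, the limit is $0$.

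Finally, for a bounded Lipschitz domain $\Omega$ and $f\in L^p(\Omega)$, I let $\tilde f$ be the extension of $f$ by zero. Then $\tilde f\in L^p(\mathbb{R}^d)$ with the same norm, and $\rho_\epsilon*f$ is understood as $(\rho_\epsilon*\tilde f)|_\Omega$. Hence
\[
\|\rho_\epsilon*f-f\|_{L^p(\Omega)}\le\|\rho_\epsilon*\tilde f-\tilde f\|_{L^p(\mathbb{R}^d)}\to0 .
\]
The Lipschitz regularity of $\partial\Omega$ is not actually needed for this restriction argument; it only ensures that $\Omega$ is a reasonable domain for the later applications.
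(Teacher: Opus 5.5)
Your proposal is correct and is essentially the argument behind the paper's citation: the paper gives no proof of its own and defers to Brezis, Theorem 4.22, which uses the same density reduction to $C_c$, uniform convergence on a fixed compact support, and the bound $\|\rho_\epsilon*(f-g)\|_p\le\|f-g\|_p$. Your additional Minkowski-inequality step for non-compactly supported kernels goes beyond the cited theorem (stated only for $C_c$ mollifiers). It correctly covers the paper's Gaussian example, which is inconsistent with the hypothesis $\rho\in C_c^\infty$.
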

\begin{lemma}[{\cite[Theorem 5.2, Chapter 5, Page 199]{Evans}}]
\label{TV_ineq}
     Suppose $f_k\in BV(\Omega)\ (k=1,2,\cdots)$ and 
    $$ f_k\to f\quad in\ L_{loc}^1(\Omega).$$
    Then we have the following lower semicontinuity of variation measure
    $$\lim_{k\to \infty}\|f_k\|_{TV}\geq \|f\|_{TV}.$$
\end{lemma}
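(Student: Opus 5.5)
The plan is to use the dual (distributional) characterization of the total variation,
\[
\|f\|_{TV}=\sup\Big\{\int_\Omega f\,\nabla\cdot\varphi\ dx:\ \varphi\in C_c^1(\Omega;\mathbb{R}^d),\ |\varphi|\le 1\Big\},
\]
which is valid for every $f\in L^1_{loc}(\Omega)$, the value $+\infty$ being allowed. I will read the left-hand side of the inequality in the statement as $\liminf_{k\to\infty}\|f_k\|_{TV}$. This is the natural meaning, and when the limit exists it coincides with the statement as written.

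\textbf{Step 1: one test field at a time.} Fix an admissible $\varphi\in C_c^1(\Omega;\mathbb{R}^d)$ with $|\varphi|\le 1$, and let $K=\operatorname{supp}\varphi$. Then $K$ is a compact subset of $\Omega$ and $\nabla\cdot\varphi$ is bounded on $K$. Since $f_k\to f$ in $L^1_{loc}(\Omega)$, we have $f_k\to f$ in $L^1(K)$. It follows that
\[
\Big|\int_\Omega (f_k-f)\,\nabla\cdot\varphi\ dx\Big|\le \|\nabla\cdot\varphi\|_{L^\infty}\,\|f_k-f\|_{L^1(K)}\to 0 .
\]

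\textbf{Step 2: compare with the variations of $f_k$.} For each $k$, the definition of the supremum gives $\int_\Omega f_k\,\nabla\cdot\varphi\,dx\le \|f_k\|_{TV}$. Passing to the limit using Step 1, we obtain
\[
\int_\Omega f\,\nabla\cdot\varphi\ dx=\lim_{k\to\infty}\int_\Omega f_k\,\nabla\cdot\varphi\ dx\le \liminf_{k\to\infty}\|f_k\|_{TV}.
\]

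\textbf{Step 3: take the supremum.} The right-hand side of the last inequality does not depend on $\varphi$. Taking the supremum over all admissible $\varphi$ therefore yields $\|f\|_{TV}\le\liminf_k\|f_k\|_{TV}$. As a by-product, if the $\liminf$ is finite then $f$ has bounded variation in $\Omega$.

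There is no genuine obstacle in this argument. The only points requiring care are the following two.
\begin{itemize}
\item The dual formula must be used as the definition of $\|\cdot\|_{TV}$, so that it applies to the limit $f$ before we know that $f\in BV(\Omega)$.
\item The test fields must have compact support in $\Omega$. This is exactly what allows mere $L^1_{loc}$ convergence to suffice, with no control of $f_k$ near $\partial\Omega$.
\end{itemize}
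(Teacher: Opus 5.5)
Your argument is correct and is essentially the standard proof. The paper gives no proof of its own and simply cites Evans--Gariepy, where the inequality is obtained exactly this way: pass to the limit in $\int_\Omega f_k\,\nabla\cdot\varphi\,dx\le\|f_k\|_{TV}$ for each fixed compactly supported test field $\varphi$ with $|\varphi|\le 1$, then take the supremum. Your reading of the left-hand side as $\liminf_{k\to\infty}\|f_k\|_{TV}$ is the right one, since the limit need not exist, and it is what the cited theorem actually asserts.
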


 \begin{lemma}[{\cite[Page 238]{Maz}}]
 \label{parmeter_gamma}
      The approximation \( \dfrac{C_G}{\epsilon}P_{G_\epsilon}(\cdot) \) \(\Gamma\)-converges and  pointwise converges to \(\|\cdot\|_{TV}\). For each $\chi\in BV(\Omega)$, we have (cf. \cite{Maz})
     $$\lim_{\epsilon\downarrow0}\dfrac{C}{\epsilon}P_{G_\epsilon}(\chi)=\|\chi\|_{TV},$$
     where $C_G=\dfrac{2}{\int_\Omega G_1(x)|x_d|\ dx}.$
 \end{lemma}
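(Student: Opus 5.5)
The plan is to reduce everything to one change of variables in the nonlocal functional, identify the limit density through the radial symmetry of the kernel, and then obtain the $\Gamma$-convergence from the pointwise limit plus a lower-semicontinuity argument. Throughout I assume, as in Lemma \ref{mollifier_convergence}, that $G\ge 0$ is radial with $\int G=1$. This is an assumption I am adding: the statement as written does not impose it, and without it the constant $C_G$ would depend on the direction $\nu$. Since the result is cited from \cite{Maz}, the proof should track the argument there (Bourgain--Brezis--Mironescu, D\'avila, Ponce) rather than invent a new one.

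\textbf{Change of variables and the constant.} Substituting $y=x+\epsilon z$ gives
\begin{equation*}
\frac{1}{\epsilon}P_{G_\epsilon}(\chi)=\frac12\int_{\mathbb{R}^d}G(z)\int_{\Omega\cap(\Omega-\epsilon z)}\frac{|\chi(x+\epsilon z)-\chi(x)|}{\epsilon}\,dx\,dz .
\end{equation*}
For $\chi\in C^1(\bar\Omega)$ the inner quotient tends to $\int_\Omega|\nabla\chi(x)\cdot z|\,dx$, and dominated convergence in $z$ then yields the limit
\begin{equation*}
\frac12\int_\Omega\int G(z)|\nabla\chi\cdot z|\,dz\,dx .
\end{equation*}
Because $G$ is radial, for each $x$ we can rotate so that $\nabla\chi(x)=|\nabla\chi(x)|e_d$. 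This gives
\begin{equation*}
\int G(z)|\nabla\chi\cdot z|\,dz=|\nabla\chi|\int G|z_d|\,dz .
\end{equation*}
The limit is therefore $\tfrac12\big(\int G|z_d|\big)\|\chi\|_{TV}=\|\chi\|_{TV}/C_G$, which is exactly where the normalization $C_G=2/\int G_1|x_d|$ comes from. For general BV functions the same rotation argument applies with the polar decomposition $D\chi=\nu|D\chi|$.

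\textbf{Pointwise convergence on $BV(\Omega)$.} First I prove the upper bound
\begin{equation*}
\int_{\Omega\cap(\Omega-h)}|\chi(x+h)-\chi(x)|\,dx\le\int_0^1|D\chi\cdot h|\big(\Omega\cap(\Omega-th)\ \text{slab}\big)\,dt\le |h|\,|D\chi|(\Omega).
\end{equation*}
I get this first for smooth functions by the fundamental theorem of calculus, and then for BV functions by strict approximation (Anzellotti--Giaquinta), using the Lipschitz boundary so that smooth functions up to the boundary approximate strictly. Plugging this into the change-of-variables formula gives the limsup with the directional measure $|D\chi\cdot z|(\Omega)$. Integrating against $G(z)$ then recovers $\|\chi\|_{TV}/C_G$.

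For the liminf I fix $z$ and a field $\varphi\in C_c^1(\Omega)$ with $|\varphi|\le1$, and write the difference quotient weakly:
\begin{equation*}
\int\frac{\chi(x+\epsilon z)-\chi(x)}{\epsilon}\varphi\,dx=-\int\chi\,\frac{\varphi(x)-\varphi(x-\epsilon z)}{\epsilon}\,dx\to-\int\chi\,z\cdot\nabla\varphi .
\end{equation*}
Taking the supremum over $\varphi$ gives the lower bound $|D\chi\cdot z|(\Omega)$. Fatou's lemma in $z$ then closes the matching lower bound. This step proves the pointwise statement.

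\textbf{$\Gamma$-convergence.} I take convergence in $L^1(\Omega)$ as the topology. The limsup inequality is immediate from the pointwise limit, using the constant recovery sequence $\chi_\epsilon=\chi$. The liminf inequality is the main obstacle, because $\chi_\epsilon\to\chi$ only in $L^1$ and the pointwise argument cannot be applied to a moving sequence. I would follow Ponce's strategy here. First mollify: by convexity of $|\cdot|$ and Jensen's inequality, $P_{G_\epsilon}(\rho_\delta*\chi_\epsilon)\le P_{G_\epsilon}(\chi_\epsilon)$ up to a boundary layer of width $\delta$. This layer is handled by working on $\Omega'\Subset\Omega$ and letting $\Omega'\uparrow\Omega$ at the end. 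Next, for fixed $\delta$ the functions $\rho_\delta*\chi_\epsilon$ converge in $C^2(\bar\Omega')$ to $\rho_\delta*\chi$, so the smooth Taylor computation applies uniformly and gives
\begin{equation*}
\liminf_\epsilon\frac{C_G}{\epsilon}P_{G_\epsilon}(\chi_\epsilon)\ge\|\rho_\delta*\chi\|_{TV(\Omega')} .
\end{equation*}
Finally I let $\delta\to0$ and invoke Lemma \ref{TV_ineq}. I expect the delicate points to be the boundary-layer control and the uniformity of the Taylor remainder in $z$, given that $G$ has compact support.
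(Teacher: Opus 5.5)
The paper gives no proof of this lemma; it only cites \cite{Maz}. Your outline follows the standard Bourgain--Brezis--Mironescu/D\'avila/Ponce route behind such results. Your added hypothesis that $G\ge0$ is radial is genuinely needed. Two parts of your sketch are sound:
\begin{itemize}
\item the pointwise lower bound, via duality with $\varphi\in C_c^1(\Omega)$ followed by Fatou in $z$;
\item the $\Gamma$-liminf, via Jensen after mollification on $\overline{\Omega'}\subset\Omega$, then Lemma~\ref{TV_ineq} and $\Omega'\uparrow\Omega$.
\end{itemize}

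\textbf{The gap is the pointwise upper bound.} You use
\[
\int_{\Omega\cap(\Omega-h)}|\chi(x+h)-\chi(x)|\,dx\le |h|\,|D\chi|(\Omega),
\]
derived from the fundamental theorem of calculus along the segment $[x,x+h]$. When $\Omega$ is not convex, that segment need not lie in $\Omega$, and the inequality then fails even for Lipschitz domains. For example, take
\[
\Omega=(0,2+\delta)\times(0,H)\setminus[1,1+\delta]\times[1,H),\qquad \delta<1,
\]
and $\chi=\varphi(x_1)$ with $\varphi$ nondecreasing, $\varphi=0$ on $(-\infty,1]$, $\varphi=1$ on $[1+\delta,\infty)$. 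Then $|D\chi|(\Omega)=1$. But for $h=(2\delta,0)$, every $x\in(1-\delta,1)\times(1,H)$ has $x+h\in\Omega$ and $|\chi(x+h)-\chi(x)|=1$. So the left side is at least $\delta(H-1)$, which exceeds $2\delta$ once $H>3$. Strict approximation by functions smooth up to $\partial\Omega$ does not repair this, because the defect comes from chords leaving $\Omega$, not from lack of smoothness.

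A telling symptom: your argument never uses the Lipschitz hypothesis essentially, yet the conclusion fails without it. On a slit domain, with $\chi$ jumping across the slit, the limit strictly exceeds $|D\chi|(\Omega)$.

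\textbf{The fix is the BV extension property of Lipschitz domains.} There is $\bar u\in BV(\mathbb{R}^d)$ with $\bar u=\chi$ a.e.\ in $\Omega$ and $|D\bar u|(\partial\Omega)=0$. Apply the fundamental theorem of calculus to mollifications of $\bar u$ on all of $\mathbb{R}^d$ and pass to the limit. This gives
\[
\int_{\Omega\cap(\Omega-h)}|\chi(x+h)-\chi(x)|\,dx\le\int_\Omega|\bar u(x+h)-\bar u(x)|\,dx\le|D\bar u\cdot h|(\Omega_{2|h|}),\qquad \Omega_r:=\{y:\ \mathrm{dist}(y,\Omega)<r\}.
\]
Set $F_\epsilon(z):=\epsilon^{-1}\int_{\Omega\cap(\Omega-\epsilon z)}|\chi(x+\epsilon z)-\chi(x)|\,dx$. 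The estimate yields two things:
\begin{itemize}
\item the domination $F_\epsilon(z)\le|z|\,|D\bar u|(\mathbb{R}^d)$, which you need to move the $\limsup$ inside the $z$-integral;
\item since $\Omega_{2\epsilon|z|}\downarrow\overline{\Omega}$, the bound $\limsup_{\epsilon\downarrow0}F_\epsilon(z)\le|D\bar u\cdot z|(\overline{\Omega})=|D\chi\cdot z|(\Omega)$.
\end{itemize}
This is exactly where the Lipschitz boundary enters. With this replacement, the pointwise limit goes through, and with it the recovery-sequence half of the $\Gamma$-convergence, which you based on the constant sequence.
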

\begin{definition}
    Let \( (X, \tau) \) be a topological space, and \( \{f_n\} \) be a sequence of functions such that \( f_n: X \to \mathbb{R} \cup \{+\infty\} \). The sequence \( \{f_n\} \) is said to be \emph{equicoercive} if, for every \( t \in \mathbb{R} \), there exists a compact set \( K_t \subset X \) such that, for all \( n \), the sublevel set \( \{ x \in X : f_n(x) \leq t \} \subset K_t \).
\end{definition}

 \begin{lemma}[{\cite[Theorem 5.1, Chapter 5, Page 67]{Andrea}}]
    \label{gamma_convergence}
    Let \( (X, d) \) be a metric space and \( f_n, f: X \to \mathbb{R} \cup \{+\infty\} \) be a sequence of functions such that each \(f_n\) is coercive and lower semicontinuous, and the sequence \(f_n\) \(\Gamma\)-converges to \(f\) and is equicoercive. Suppose \( x^* \in X \) is an isolated local minimizer of \( f \), i.e., there exists a neighborhood \( U \subset X \) of \( x^* \) such that:
    \begin{enumerate}
        \item \( f(x^*) < f(x) \) for all \( x \in U \).
        \item \( x^* \) is the only local minimizer in \( U \).
    \end{enumerate}
    If \( \{x_n\} \subset U \) is a sequence of local minimizers of \( f_n \) such that \( x_n \to x \) in the metric \( d \), then \( x = x^* \).
 \end{lemma}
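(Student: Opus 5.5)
The plan is the standard ``minimizers converge to minimizers'' argument of $\Gamma$-convergence, localized to the neighbourhood $U$. I will read ``$x_n$ is a local minimizer of $f_n$ in $U$'' as $f_n(x_n)\le f_n(y)$ for all $y\in U$, with $U$ open. If only the weaker notion is available, I will assume the minimality radius $\delta>0$ is uniform in $n$; this is discussed at the end. I will also shrink $U$ if necessary, so that property (1) holds on a slightly larger open set $U'\supset\overline U$.

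\textbf{Step 1 (comparison with recovery sequences).} Fix an arbitrary $y\in U$. By the limsup inequality there is a recovery sequence $y_n\to y$ with $\limsup_n f_n(y_n)\le f(y)$. Since $U$ is open, $y_n\in U$ for $n$ large. Local minimality then gives $f_n(x_n)\le f_n(y_n)$ for all large $n$.

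\textbf{Step 2 (passing to the limit).} Since $x_n\to x$, the liminf inequality yields
\[
f(x)\le\liminf_{n\to\infty} f_n(x_n)\le\limsup_{n\to\infty} f_n(y_n)\le f(y).
\]
Because $y\in U$ was arbitrary, $f(x)\le \inf_U f$. The limit $x$ lies in $\overline U\subset U'$.

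\textbf{Step 3 (identification of the limit).} Choosing $y=x^*$ gives $f(x)\le f(x^*)$. By property (1), $f(x^*)<f(z)$ for every $z\in U'\setminus\{x^*\}$, so $x=x^*$. Alternatively, if $x\in U$, then $x$ minimizes $f$ over the open set $U$, hence is a local minimizer of $f$ in $U$, and property (2) gives $x=x^*$. Equicoercivity and the lower semicontinuity of each $f_n$ are used only to ensure that $f_n(x_n)$ stays bounded and that $\liminf_n f_n(x_n)$ is a real number rather than $\pm\infty$. In particular, equicoercivity supplies a compact set containing the $x_n$, which is consistent with the assumed convergence $x_n\to x$.

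\textbf{Main obstacle.} The delicate point is Step 1 when $x_n$ is only a local minimizer of $f_n$ on some ball $B(x_n,\delta_n)$. In that case the recovery sequence $y_n$ must eventually lie in $B(x_n,\delta_n)$. This is automatic if $\delta_n\ge\delta>0$ uniformly, because then $d(y_n,x_n)\le d(y_n,y)+d(y,x)+d(x,x_n)<\delta$ for every $y\in B(x,\delta/2)$ and large $n$. Under that uniform-radius assumption, Step 2 shows that $x$ is a local minimizer of $f$ on $B(x,\delta/2)\cap U$, and Step 3 concludes as before. Without a uniform radius the conclusion can fail, so I would state this uniformity (or minimality over all of $U$) explicitly as the interpretation of the hypothesis.
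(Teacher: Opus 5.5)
\textbf{Context.} The paper gives no proof of this lemma; it only cites Braides. Your core argument (compare $x_n$ with a recovery sequence, then pass to the limit with the liminf inequality) is the standard one and is correct. You are also right that the statement cannot be proved as written.

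For instance, take $X=\mathbb{R}$ and $f_n(z)=z^2+\frac1n\sin(n^2z)$. Then $f_n$ converges uniformly to $f(z)=z^2$, so all the hypotheses hold, and $x^*=0$ is isolated in $U=(-1,1)$. But $f_n'(z)=2z+n\cos(n^2z)$ changes sign from negative to positive within distance $2\pi/n^2$ of every point of $U$ once $n\ge3$. Hence local minimizers $x_n\in U$ can be chosen to converge to any point of $U$.

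The standard theorem in this setting, due to Kohn and Sternberg, goes the other way. It produces local minimizers of $f_n$ converging to $x^*$ by minimizing $f_n$ over a closed neighbourhood of $x^*$, and that is where coercivity, lower semicontinuity and equicoercivity are really used. In your direction they play no role at all: convergence of $x_n$ is assumed, and the inequalities of Step 2 hold in $[-\infty,+\infty]$.

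\textbf{The gap.} The genuine gap is the step where you shrink $U$ so that (1) holds on some $U'\supset\overline U$. This is not a without-loss-of-generality reduction. The hypothesis places the $x_n$ in the \emph{given} $U$, and after shrinking they need not lie in the smaller set. So the limit $x$ can still sit on $\partial U$, where (1) and (2) say nothing. This is a second failure of the literal statement, independent of the radius issue, and it persists even for global minimizers.

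\textbf{Counterexample for the boundary case.} On $X=\mathbb{R}$ let $f_n(z)=-1$ for $1-\frac1n\le|z|\le 2$ and $f_n(z)=z^2$ otherwise. Let $f$ be defined the same way with $1-\frac1n$ replaced by $1$.
\begin{itemize}
\item Each $f_n$ is lower semicontinuous and coercive, and the family is equicoercive.
\item $f_n$ $\Gamma$-converges to $f$: near any $z$ with $|z|<1$ or $|z|>2$ the $f_n$ eventually coincide with $f$, while on $1\le|z|\le2$ one has $f_n=-1=f$ and $f_n\ge-1$ everywhere.
\item $x^*=0$ satisfies (1) and (2) on $U=(-1,1)$.
\item Yet $x_n=1-\frac1n\in U$ is a global minimizer of $f_n$, and $x_n\to1\ne x^*$.
\end{itemize}

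\textbf{What is needed.} The boundary case must be excluded by an explicit extra hypothesis, for example $x\in U$, or $x_n$ contained in a closed set $K\subset U$, or (1) holding on a neighbourhood of $\overline U$. Under such a hypothesis your Step 3 is fine. Under your uniform-radius reading only the second route of Step 3 is available, since $x^*$ need not lie in $B(x,\delta/2)$; so there too you need $x\in U$ in order to invoke (2). The paper's later application of this lemma does assume the accumulation point lies in $U$, but it imposes nothing like your uniform-radius condition.
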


\subsection{Relationships among the three optimization problems}
In this subsection, we establish the connections among the three optimization problems $Q$, $P$ and $P_n$. The proof strategy is as follows. First, we demonstrate that problem \( P_n \) \(\Gamma\)-converges to problem \( Q \), and discuss the relationships among the three optimization problems. Then, we use Lemma \ref{gamma_convergence} to prove that the local minimizers of \( P_n \) converge to the isolated local minimizers of \( Q \). Finally, we prove that the solution sequence generated by the optimization algorithm for solving \( P_n \) converges to the local minimizers of \( P_n \).

\begin{lemma}
    If the domain $\Omega$ has a Lipschitz boundary \(\Gamma\), then the origin problem \( Q \) has at least one solution.
\end{lemma}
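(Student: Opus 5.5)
We use the direct method of the calculus of variations, after first eliminating the stress variables. By the equivalence theorem proved above, for fixed $\chi\in\mathcal U$ and $\lambda>1/2$ the inner minimum of $L^0(\chi,\cdot,\cdot)$ over $S$ is attained at $(\sigma^*,\varrho^*)=(\sigma(\chi),\varrho(\chi))$. Its value is the reduced functional
\[
j(\chi):=\int_\Omega A(\chi)E_0^{-1}\sigma(\chi):\varrho(\chi)\,dx=\ell_{\rm out}(u(\chi)),
\]
where $u(\chi)$ solves \eqref{u_weak}. Problem $Q$ therefore reduces to minimizing $F(\chi):=j(\chi)+\frac{\gamma}{C_G}\|\chi\|_{TV}$ over $\mathcal U$. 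If $\chi^*$ minimizes $F$, then $(\chi^*,\sigma(\chi^*),\varrho(\chi^*))$ solves $Q$.

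\textbf{Step 1: boundedness from below.} Since $1/A(\chi)\ge E_{\min}>0$, the state equation \eqref{u_weak} gives $\|u(\chi)\|_{H^1}\le C\|q_{\rm in}\|_{L^2(\Gamma_N)}$ uniformly in $\chi$. This uses Korn's inequality on $H_D^1(\Omega)^d$, which holds because $\Omega$ is Lipschitz and $|\Gamma_D|>0$. By the trace theorem, $|j(\chi)|\le C$ uniformly, so $\inf F>-\infty$. Take a minimizing sequence $\chi_m$. Then $\|\chi_m\|_{TV}$ is bounded, and $\|\chi_m\|_{L^1}\le|\Omega|$.

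\textbf{Step 2: compactness and closedness.} Since $\Omega$ is Lipschitz, the compact embedding $BV(\Omega)\hookrightarrow L^1(\Omega)$ yields a subsequence with $\chi_m\to\chi$ in $L^1$ and a.e. Hence $\chi\in\{0,1\}$ a.e. and $\int_\Omega\chi\le\beta|\Omega|$. By Lemma \ref{TV_ineq}, $\|\chi\|_{TV}\le\liminf\|\chi_m\|_{TV}$, so $\chi\in\mathcal U$.

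\textbf{Step 3: continuity of $j$ (main obstacle).} We need $j(\chi_m)\to j(\chi)$. The coefficients $1/A(\chi_m)$ are uniformly bounded above and below and converge a.e. (indeed in every $L^p$, $p<\infty$). Set $u_m=u(\chi_m)$. Subtracting the two state equations and testing with $u_m-u$ gives
\[
\int_\Omega \tfrac{1}{A(\chi_m)}E_0\varepsilon(u_m-u):\varepsilon(u_m-u)\,dx=\int_\Omega\Big(\tfrac{1}{A(\chi)}-\tfrac{1}{A(\chi_m)}\Big)E_0\varepsilon(u):\varepsilon(u_m-u)\,dx .
\]
The right-hand side is bounded by $\|(\frac1{A(\chi)}-\frac1{A(\chi_m)})\varepsilon(u)\|_{L^2}\|\varepsilon(u_m-u)\|_{L^2}$. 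The first factor tends to zero by dominated convergence, since $\varepsilon(u)$ is fixed and in $L^2$. Uniform ellipticity together with Korn's inequality then gives $u_m\to u$ in $H^1$. The trace is continuous, so $j(\chi_m)=\ell_{\rm out}(u_m)\to\ell_{\rm out}(u)=j(\chi)$.

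The delicate point is to use dominated convergence with $u$ fixed, rather than $u_m$, because only a.e. convergence of the coefficients is available. The same argument gives $\sigma(\chi_m)\to\sigma(\chi)$ and $\varrho(\chi_m)\to\varrho(\chi)$ in $L^2$, should one prefer to keep the stress variables explicit.

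\textbf{Step 4: conclusion.} Combining Steps 2 and 3, $F(\chi)\le\liminf F(\chi_m)=\inf F$. So $\chi$ is a minimizer, and $(\chi,\sigma(\chi),\varrho(\chi))$ solves $Q$, since the penalty term vanishes there and any other admissible triple has no smaller value.
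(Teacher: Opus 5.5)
Your proof is correct and follows the same direct-method route as the paper. Both arguments restrict to triples $(\chi_m,\sigma(\chi_m),\varrho(\chi_m))$, extract an $L^1$-convergent subsequence via $BV(\Omega)\hookrightarrow L^1(\Omega)$, use lower semicontinuity of $\|\cdot\|_{TV}$, and pass to the limit in $\ell_{\rm out}(u(\chi_m))$. The only real difference is that you get strong $H^1$ convergence of the states directly, from the energy identity tested with $u_m-u$ and dominated convergence with the fixed limit state $u$; the paper instead takes a weak $H^1$ limit, identifies it by citing an external result, and handles the boundary term with Rellich plus the interpolation trace inequality.

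You also make explicit what the paper leaves tacit: the uniform lower bound on $j$ that bounds $\|\chi_m\|_{TV}$, the closedness of $\mathcal U$ under $L^1$ limits, and the hypothesis $\lambda>1/2$ behind the reduction. That hypothesis cannot be dropped. For $\lambda<1/2$, take triples $(\chi,\sigma^*+ts,\varrho^*-ts)$ with $s\neq0$ symmetric, divergence-free and with zero normal trace on $\Gamma_N$; these give $L^{0}=j(\chi)+(2\lambda-1)t^2\int_\Omega A(\chi)E_0^{-1}s:s\,dx\to-\infty$ as $t\to\infty$.
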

\begin{proof}
    Let \(\{\chi_k\}\subset \mathcal{U}\) be a minimizing sequence for problem \( Q \). Thus, \(\{\chi_k\} \subset BV(\Omega)\). Since \( BV(\Omega) \hookrightarrow L^1(\Omega) \), there exists a convergent subsequence (still denoted by \(\{\chi_h\}\)) that converges to some \(\chi^*\) in \( L^1(\Omega) \). 
    
    Meanwhile, the solutions \((u(\chi_k), v(\chi_k))\) to the state equation \eqref{u_weak} and the adjoint equation \eqref{v_weak} with $\chi$ replaced by $\chi_k$  are bounded in \([H_D^1(\Omega)^d]^2\), so \((u(\chi_h), v(\chi_h))\) weakly converges to some \((u^*, v^*) \in [H_D^1(\Omega)]^2\). According to \cite{Ambr}, we have 
    $$(u^*, v^*) = \arg\min_{(u, v) \in [H_D^1(\Omega)]^2} \Big( \dfrac{1}{2}a(\chi^*;u,u)-\ell_{\rm in}(u)+\dfrac{1}{2}a(\chi^*;v,v)-\ell_{\rm out}(v)\Big).$$
    Define $(\sigma^*,\varrho^*):=(\dfrac{1}{A(\chi^*)}{E}_0\varepsilon(u^*),\dfrac{1}{A(\chi^*)}{E}_0\varepsilon(v^*))$. In the following, we demonstrate that \(\{(\chi^*, \sigma^*, \varrho^*)\}\) is a solution to the problem \( Q \). Note that
    \begin{align*}
        &L^{\gamma,0}(\chi^*,\sigma^*,\varrho^*)-L^{\gamma,0}(\chi_k,\sigma(\chi_k),\varrho(\chi_k))\\
        =&\int_\Omega A(\chi^*)E^{-1}_0\sigma^*:\varrho^*\ dx+\dfrac{\gamma}{C_G} \|\chi^*\|_{TV}-\int_\Omega A(\chi_h)E^{-1}_0\sigma(\chi_k):\varrho(\chi_k)\ dx-\dfrac{\gamma}{C_G} \|\chi_k\|_{TV}\\
        =&\int_\Gamma q_{\rm out}\cdot u^* dx + \dfrac{\gamma}{C_G} \|\chi^*\|_{TV}-\int_\Gamma q_{\rm out}\cdot u(\chi_h) dx-\dfrac{\gamma}{C_G} \|\chi_k\|_{TV}\\
        =&\int_\Gamma q_{\rm out}\cdot (u^*-u(\chi_k))\ ds + \dfrac{\gamma}{C_G}(\|\chi^*\|_{TV}-\|\chi_k\|_{TV}).
    \end{align*}
   Recall the classical trace inequality (cf. \cite{BrennerScott2008})
    \begin{equation}
        \|u\|_{0,\Gamma}\leq C\|u\|_{0,\Omega}^{1/2}\|u\|_{1,\Omega}^{1/2}\qquad \forall u\in H^1(\Omega).
    \end{equation}
    Since \( u(\chi_k) \rightharpoonup u^*\) in $H^1(\Omega)^d$, it follows that \(u(\chi_k) \rightarrow u^*\) in $L^2(\Omega)^d$. Thus, we have
    \begin{align*}
        \int_\Gamma q_{\rm out}\cdot (u^*-u(\chi_k))\ ds&\leq \|q_{\rm out}\|_{0,\Gamma}\|u^*-u(\chi_k)\|_{0,\Gamma}\\
        &\leq C_1\|q_{\rm out}\|_{0,\Gamma}\|u^*-u(\chi_k)\|_{0,\Omega}^{\frac{1}{2}}\|u^*-u(\chi_k)\|_{1,\Omega}^\frac{1}{2}\\
        &\leq C_2\|q_{\rm out}\|_{0,\Gamma}(\|u^*\|_{1,\Omega}+\|u(\chi_k)\|_{1,\Omega})^{\frac{1}{2}}\|u^*-u(\chi_k)\|_{0,\Omega}^{\frac{1}{2}}.
    \end{align*}
    Furthermore, based on the lower semi-continuity of the TV-norm, we obtain
    \begin{align*}
        &\quad\limsup_{h\downarrow 0}\left(L^{\gamma,0}(\chi^*,\sigma^*,\varrho^*)-L^{\gamma,0}(\chi_k,\sigma(\chi_k),\varrho(\chi_k))\right)\\
        &\leq \limsup_{h\downarrow 0}C_2\|q_{\rm out}\|_{0,\Gamma}(\|u^*\|_{1,\Omega}+\|u(\chi_k)\|_{1,\Omega})^{\frac{1}{2}}\|u^*-u(\chi_k)\|_{0,\Omega}^{\frac{1}{2}}+\dfrac{\gamma}{C_G}\limsup_{h\downarrow 0}(\|\chi^*\|_{TV}-\|\chi_k\|_{TV})\\
        &\leq\dfrac{\gamma}{C_G}(\|\chi^*\|_{TV}-\liminf_{h\downarrow 0}\|\chi_k\|_{TV})\leq 0.
    \end{align*}
    Therefore, we obtain the following inequality,
    \begin{equation}
        L^{\gamma,0}(\chi^*,\sigma^*,\varrho^*)\leq \liminf_{h\downarrow 0}L^{\gamma,0}(\chi_h,\sigma(\chi_h),\varrho(\chi_h)).
    \end{equation}
    This shows that \((\chi^*, \sigma^*, \varrho^*)\) is a solution to the optimization problem \( Q \).
\end{proof}

In the following, we first investigate the convergence of problem $P$ to problem $Q$.
\begin{theorem}\label{P_star_Q}
    For each $s>2$, the functional \( L^{\gamma,\epsilon_k} \) \(\Gamma\)-converges to \( L^{\gamma,0} \) in $X^s:=\mathcal{U}\times(S\cap [L^s(\Omega)^{d\times d}]^2)$ with respect to the parameter \(\epsilon_k\), and \( L^{\gamma,\epsilon_k} \) converges pointwise to \( L^{\gamma,0} \) in $X$.
\end{theorem}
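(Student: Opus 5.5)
I will split $L^{\gamma,\epsilon_k}$ into a bulk part $L^{\epsilon_k}(\chi^{\epsilon_k},\sigma,\varrho)$ and the perimeter part $\frac{\gamma}{\epsilon_k}P_{G_{\epsilon_k}}(\chi)$ and treat them separately. Two facts reduce $\Gamma$-convergence of the sum to $\Gamma$-convergence of the perimeter part (with the constant $C_G$ absorbed as in the definition of $Q$, via Lemma \ref{parmeter_gamma}):
\begin{enumerate}
\item the bulk part converges continuously on $X^s$;
\item $\Gamma$-convergence is stable under continuously convergent perturbations.
\end{enumerate}
Pointwise convergence on $X$ follows the same route. For fixed $(\chi,\sigma,\varrho)$, the perimeter part converges pointwise by Lemma \ref{parmeter_gamma}, and the bulk part converges with a fixed sequence, which requires no extra integrability.

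\textbf{Step 1 (state variables).} Take $\chi_k\to\chi$ in $L^1(\Omega)$ with $\chi_k\in\mathcal{U}$. Since $0\le\chi_k\le1$, we have
$$\|G_{\epsilon_k}*\chi_k-\chi\|_{L^1}\le\|G_{\epsilon_k}*(\chi_k-\chi)\|_{L^1}+\|G_{\epsilon_k}*\chi-\chi\|_{L^1}\to0$$
by Lemma \ref{Young_inequality} and Lemma \ref{mollifier_convergence}. Hence $\chi_k^{\epsilon_k}\to\chi$ in every $L^p$, $p<\infty$, and a.e. along a subsequence. The coefficients $A(\chi_k^{\epsilon_k})$ and $1/A(\chi_k^{\epsilon_k})$ are uniformly bounded above and below, because $A$ is affine with values between $1/E_{\max}$ and $1/E_{\min}$. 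The states $u_k^*,v_k^*$ are therefore bounded in $H^1_D$.

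Testing the difference of the equations for $u_k^*$ and $u^*$ with $u_k^*-u^*$ gives
$$\|u_k^*-u^*\|_{H^1}\lesssim\big\|(1/A(\chi_k^{\epsilon_k})-1/A(\chi))\,\varepsilon(u^*)\big\|_{L^2}.$$
The right-hand side tends to zero by dominated convergence along the a.e. subsequence. Uniqueness of the limit upgrades this to the full sequence. So $\sigma_k^*\to\sigma^*$ and $\varrho_k^*\to\varrho^*$ strongly in $L^2$, and the subtracted terms $\int A(\chi_k^{\epsilon_k})E_0^{-1}(|\sigma_k^*|^2+|\varrho_k^*|^2)$ converge, again using boundedness of $A$ and dominated convergence.

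\textbf{Step 2 (terms in the free variables).} Take $(\sigma_k,\varrho_k)\to(\sigma,\varrho)$ in $X^s$. I interpret convergence in $X^s$ as $L^s$-convergence. The integrands have the form $A(\chi_k^{\epsilon_k})\,\sigma_k:\varrho_k$. Since $\sigma_k:\varrho_k\to\sigma:\varrho$ in $L^{s/2}$ with $s/2>1$, Hölder's inequality gives
$$\Big|\int (A(\chi_k^{\epsilon_k})-A(\chi))\,\sigma_k:\varrho_k\Big|\le \|A(\chi_k^{\epsilon_k})-A(\chi)\|_{L^{(s/2)'}}\,\|\sigma_k:\varrho_k\|_{L^{s/2}}\to0,$$
because $\chi_k^{\epsilon_k}\to\chi$ in $L^{(s/2)'}$. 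This is where $s>2$ is used. With only $L^2$ convergence, $\sigma_k:\varrho_k$ is merely in $L^1$ and $A(\chi_k^{\epsilon_k})$ converges only a.e. and boundedly, which is insufficient without uniform integrability. The same argument handles $|\sigma_k|^2$ and $|\varrho_k|^2$. This yields continuous convergence of the bulk part on $X^s$.

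\textbf{Step 3 (perimeter part).} I now need the liminf and limsup inequalities for $\frac{\gamma}{\epsilon_k}P_{G_{\epsilon_k}}$ under $L^1$ convergence in $\mathcal{U}$. Both are supplied by the $\Gamma$-convergence in Lemma \ref{parmeter_gamma}; the pointwise statement of the same lemma gives the constant recovery sequence. For the recovery sequence of the full functional I take $\chi_k\equiv\chi$ and $(\sigma_k,\varrho_k)\equiv(\sigma,\varrho)$, and Step 2 applies.

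The main obstacle is that the cited $\Gamma$-limit for the nonlocal perimeter is usually stated on $L^1(\Omega)$. I must check two things about the restriction to $\mathcal{U}$, i.e. characteristic functions with the volume constraint. For the liminf this is automatic. For the limsup the constant sequence already lies in $\mathcal{U}$. A secondary point is the normalization $C_G$: I will follow the paper's convention that $\frac{1}{\epsilon}P_{G_\epsilon}\to\frac{1}{C_G}\|\cdot\|_{TV}$.
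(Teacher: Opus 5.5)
Your overall architecture matches the paper's. You split off the perimeter term and take its liminf/limsup from Lemma~\ref{parmeter_gamma}, and you prove continuous convergence of the bulk part $L^{\epsilon_k}$ on $X^s$. Your Step~2 is exactly the paper's H\"older--Young estimate with exponents $\frac{s}{s-2},s,s$.

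The genuine difference is Step~1. The paper tests the difference of the state equations so that the \emph{perturbed} state sits on the right-hand side. It obtains $\|u(\chi)-u(\chi_m^{\epsilon_k})\|_{1,\Omega}\le C\|G_{\epsilon_k}*\chi_m-\chi\|_{0,\frac{2s}{s-2},\Omega}\|u(\chi_m^{\epsilon_k})\|_{1,s,\Omega}$. This tacitly needs a bound on $\|u(\chi_m^{\epsilon_k})\|_{1,s,\Omega}$ uniform in $m,k$, i.e.\ a Meyers-type higher-integrability estimate. The paper never establishes it, and for discontinuous high-contrast coefficients one can only expect it for some $s$ close to $2$.

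You put the \emph{fixed} limit state on the right, so only $\varepsilon(u^*)\in L^2$ is used. Dominated convergence along a.e.-convergent subsequences, plus the subsequence principle, then closes the argument with no regularity assumption. The same device gives you pointwise convergence on $X$ with merely $L^2$ stresses, which the paper's $L^s$-based estimates do not literally cover. So your route is more elementary and actually repairs a weak point of the paper's proof.

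One side remark of yours is wrong, though it does not affect the proof: $s>2$ is not needed in Step~2. Suppose $(\sigma_k,\varrho_k)\to(\sigma,\varrho)$ only in $L^2$. Then $f_k:=E_0^{-1}\sigma_k:\varrho_k\to f:=E_0^{-1}\sigma:\varrho$ strongly in $L^1$, so uniform integrability is automatic, and
\[
\int_\Omega\big(A(\chi_k^{\epsilon_k})-A(\chi)\big)f_k\,dx=\int_\Omega\big(A(\chi_k^{\epsilon_k})-A(\chi)\big)(f_k-f)\,dx+\int_\Omega\big(A(\chi_k^{\epsilon_k})-A(\chi)\big)f\,dx.
\]
The first term is bounded by $C\|f_k-f\|_{0,1,\Omega}$ because $A(\chi_k^{\epsilon_k})-A(\chi)$ is uniformly bounded. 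The second vanishes by the same dominated-convergence argument as in your Step~1. Hence your proof actually gives the $\Gamma$-convergence on all of $X$ with the $L^1\times L^2\times L^2$ topology; the restriction to $X^s$ is inherited from the paper, not forced.
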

\begin{proof}
    It follows from the definition that
    $$L^{\gamma,\epsilon_k}(\chi,\sigma,\varrho)=L^{\epsilon_k}(\chi^{\epsilon_k},\sigma,\varrho)+\dfrac{\gamma}{\epsilon_k}P_{G_{\epsilon_k}}(\chi),$$
    $$L^{\gamma,0}(\chi,\sigma,\varrho)=L^0(\chi,\sigma,\varrho)+\dfrac{\gamma}{C_G}\|\chi\|_{TV},$$
    so we separately discuss the convergence of these two parts. By Lemma \ref{parmeter_gamma}, we have established that \( \dfrac{\gamma}{\epsilon_k}P_{G_{\epsilon_k}}(\cdot) \) \(\Gamma\)-converges and pointwise converges to \(\dfrac{\gamma}{C_G}\|\cdot\|_{TV}\). For $L^{\epsilon_k}(\cdot)$ and $L^0(\cdot)$, we will prove the continuous convergence, denoted as $L^{\epsilon_k}\stackrel{c}{\longrightarrow}L^0 $.

    Let $\chi_m\stackrel{L^1}{\longrightarrow}\chi$ and $(\sigma_m,\varrho_m)\stackrel{L^s}{\longrightarrow}(\sigma,\varrho)$ for some  $s>2$, it follows from the definition and \eqref{G_rep}  that
    \begin{align*}
        L^{\epsilon_k}(\chi_m^{\epsilon_k},\sigma_m,\varrho_m)-L^0(\chi,\sigma,\varrho)
        &=\int_\Omega(A(\chi_m^{\epsilon_k})-A(\chi))E_0^{-1}\sigma_m:\varrho_m\ dx + \int_\Omega A(\chi)E_0^{-1}(\sigma_m-\sigma):\varrho_m\ dx \\
        &+\int_\Omega A(\chi)E_0^{-1}\sigma:(\varrho_m-\varrho) dx + \lambda\int_\Omega(A(\chi_m^{\epsilon_k})-A(\chi))E_0^{-1}(|\sigma_m|^2+|\varrho_m|^2) dx\\
        &+\lambda\int_\Omega A(\chi)E_0^{-1}(|\sigma_m|^2-|\sigma|^2+|\varrho_m|^2-|\varrho|^2)\ dx + \lambda a(\chi;u(\chi),u(\chi))\\
        &+\lambda a(\chi;v(\chi),v(\chi))-\lambda a(\chi_m^{\epsilon_k};u(\chi_m^{\epsilon_k}),u(\chi_m^{\epsilon_k}))
        -\lambda a(\chi_m^{\epsilon_k};v(\chi_m^{\epsilon_k}),v(\chi_m^{\epsilon_k})).
    \end{align*}
    We first address the term involving \(\chi_m^{\epsilon_k} - \chi\). Note that for elements in \(\mathcal{U}\), \(L^1\)-convergence is equivalent to \(L^p\)-convergence  for any \(p \in [1, +\infty)\). We take 
    $$ \int_\Omega(A(\chi_m^{\epsilon_k})-A(\chi))E_0^{-1}\sigma_m:\varrho_m\ dx$$
    as an example to illustrate the main idea of the proof. Recalling the definition of $A(\chi)$, we have
    \begin{align*}
        &\int_\Omega(A(\chi_m^{\epsilon_k})-A(\chi))E_0^{-1}\sigma_m:\varrho_m\ dx\\
        &=\int_\Omega(A(\chi_m^{\epsilon_k})-A(\chi^{\epsilon_k}))E_0^{-1}\sigma_m:\varrho_m\ dx+\int_\Omega(A(\chi^{\epsilon_k})-A(\chi))E_0^{-1}\sigma_m:\varrho_m\ dx\\
        &=\Big(\dfrac{1}{E_{\max}}-\dfrac{1}{E_{\min}}\Big)\Big(\int_\Omega G_{\epsilon_k}*(\chi_m-\chi)E_0^{-1}\sigma_m:\varrho_m\ dx + \int_\Omega(G_{\epsilon_k}*\chi-\chi)E_0^{-1}\sigma_m:\varrho_m\ dx\Big)\\
        &\leq \dfrac{C}{E_{\min}}\Big(\|G_{\epsilon_k}*(\chi_m-\chi)\|_{0,\frac{s}{s-2},\Omega}+\|G_{\epsilon_k}*\chi-\chi\|_{0,\frac{s}{s-2},\Omega}\Big)\|\sigma_m\|_{0,s,\Omega}\|\varrho_m\|_{0,s,\Omega}\\
        &\leq\dfrac{C}{E_{\min}}\Big(\|\chi_m-\chi\|_{0,\frac{s}{s-2},\Omega}+\|G_{\epsilon_k}*\chi-\chi\|_{0,\frac{s}{s-2},\Omega}\Big)\|\sigma_m\|_{0,s,\Omega}\|\varrho_m\|_{0,s,\Omega}\to 0 \quad as \quad k,m\to+\infty,
    \end{align*}
    where we used the generalized Hölder inequality in the first inequality and Young's inequality for convolutions (cf. Lemma \ref{Young_inequality}) in the second inequality. The convergence of the first term in the limit is due to \(\chi_k\stackrel{L^p}{\longrightarrow} \chi\), while the second term converges because of the convergence of the  smooth Dirac sequence (cf. Lemma \ref{mollifier_convergence}).

    The handling of \(\sigma_m - \sigma\) and \(\varrho_m - \varrho\) is straightforward. The key lies in the estimate of \(u(\chi_m^{\epsilon_k}) - u(\chi)\), which involves the stability of solutions to linear elasticity equations with respect to the elastic tensor.
    \begin{align*}
        a(\chi;u(\chi),u(\chi))-a(\chi_m^{\epsilon_k};u(\chi_m^{\epsilon_k}),u(\chi_m^{\epsilon_k}))
        &=\ell_{\rm in}(u(\chi))-\ell_{\rm in}(u(\chi_m^{\epsilon_k}))\\
        &=\|\ell_{\rm in}\|_{0,\Gamma_N}\|u(\chi)-u(\chi_m^{\epsilon_k})\|_{0,\Gamma}\\
        &\leq C\|\ell_{\rm in}\|_{0,\Gamma_N}\|u(\chi)-u(\chi_m^{\epsilon_k})\|_{1,\Omega}.
    \end{align*}
    On the other hand, we have
    \begin{align*}
        a(\chi;u(\chi),\hat{u})=\ell_{\rm in}(\hat{u})\quad \forall \hat{u}\in H_D^1(\Omega)^d,\quad 
        a(\chi_m^{\epsilon_k};u(\chi_m^{\epsilon_k}),\tilde{u})=\ell_{\rm in}(\tilde{u})\quad \forall\tilde{u}\in  H_D^1(\Omega)^d.
    \end{align*}
    Setting $\hat{u}=\tilde{u}=u(\chi)-u(\chi_k^{\epsilon_k})$, we have
    \begin{align*}
        a(\chi;u(\chi)-u(\chi_m^{\epsilon_k}),u(\chi)-u(\chi_m^{\epsilon_k}))
    &=a(\chi_m^{\epsilon_k};u(\chi_m^{\epsilon_k}),u(\chi)-u(\chi_m^{\epsilon_k}))-a(\chi;u(\chi_m^{\epsilon_k}),u(\chi)-u(\chi_m^{\epsilon_k}))\\
       & =\Big(\dfrac{1}{E_{\max}}-\dfrac{1}{E_{\min}}\Big)\int_\Omega (G_{\epsilon_k}*\chi_m-\chi)E_0\varepsilon(u(\chi_m^{\epsilon_k})):\varepsilon(u(\chi)-u(\chi_m^{\epsilon_k}))dx\\
        &\leq\dfrac{C}{E_{\min}}\|G_{\epsilon_k}*\chi_m-\chi\|_{0,\frac{2s}{s-2},\Omega}\|u(\chi_m^{\epsilon_k})\|_{1,s,\Omega}\|u(\chi)-u(\chi_m^{\epsilon_k})\|_{1,\Omega}.
    \end{align*}
    Hence, 
    $$\|u(\chi)-u(\chi_m^{\epsilon_k})\|_{1,\Omega}\leq C\dfrac{E_{\max}}{E_{\min}}\|G_{\epsilon_k}*\chi_m-\chi\|_{0,\frac{2s}{s-2},\Omega}\|u(\chi_m^{\epsilon_k})\|_{1,s,\Omega}.$$
    The remaining terms can also be handled sequentially, ultimately controlled by \(\|\chi_m - \chi\|_{0,p,\Omega}\), \(\|G_{\epsilon_k} * \chi - \chi\|_{0,p,\Omega}\), \(\|\sigma_m - \sigma\|_{0,s,\Omega}\), and \(\|\varrho_m - \varrho\|_{0,s,\Omega}\). This completes the proof that \(L^{{\epsilon_k}} \stackrel{c}{\longrightarrow} L^{0}\).
\end{proof}

Then we investigate the convergence of problem $P_n$ to problem $P$.
\begin{theorem}\label{P_n_star_P}
     For all $\epsilon_k> 0$, we have $L_n^{\gamma,{\epsilon_k}}\stackrel{sub-c}{\longrightarrow} L^{\gamma,{\epsilon_k}}$ and  $L_n^{\epsilon_k}\stackrel{sub-c}{\longrightarrow} L^{\epsilon_k}$ as $n\rightarrow +\infty$. In particular, if the solutions to the linear elasticity equation have a higher regularity \((u, v) \in [H^s(\Omega)^d\cap H_D^1(\Omega)^d]^2\) for some \(s>1\)), then for any sequence \(\{\epsilon_n\}_{n \geq 0}\) with \(\epsilon_n \to 0\), we have \(L_n^{\epsilon_n} \stackrel{sub-c}{\longrightarrow}  L^0\).
\end{theorem}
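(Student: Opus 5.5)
We follow the scheme of Theorem \ref{P_star_Q}: split $L_n^{\gamma,\epsilon_k}=L_n^{\epsilon_k}+\frac{\gamma}{\epsilon_k}P_{G_{\epsilon_k}}$ and treat the two parts separately. The perimeter part is the same functional for $n$ and the continuous problem, and for fixed $\epsilon_k>0$ it is Lipschitz in $L^1$. Indeed,
$$\frac{1}{\epsilon_k}\big|P_{G_{\epsilon_k}}(\chi_n)-P_{G_{\epsilon_k}}(\chi)\big|\le \frac{\|G_{\epsilon_k}\|_{L^1}}{\epsilon_k}\|\chi_n-\chi\|_{0,1,\Omega},$$
so it converges continuously. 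It therefore suffices to prove $L_n^{\epsilon_k}\stackrel{sub-c}{\longrightarrow}L^{\epsilon_k}$.

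Take $(\chi_n,\sigma_n,\varrho_n)\in X_n$ converging to $(\chi,\sigma,\varrho)$, with $\chi_n\to\chi$ in $L^1$ (hence in every $L^p$, $p<\infty$) and the stresses converging in $L^s$, $s>2$. Write $L_n^{\epsilon_k}-L^{\epsilon_k}$ as the same telescoping sum used in Theorem \ref{P_star_Q}.
\begin{enumerate}
\item The terms with $A(\chi_n^{\epsilon_k})-A(\chi^{\epsilon_k})$ are bounded by Hölder and Lemma \ref{Young_inequality} by $C\|\chi_n-\chi\|_{0,\frac{s}{s-2},\Omega}\|\sigma_n\|_{0,s}\|\varrho_n\|_{0,s}$.
\item The terms with $\sigma_n-\sigma$ and $\varrho_n-\varrho$ are immediate, since $A$ is bounded by $1/E_{\min}$.
\item The remaining energy terms $a(\chi_n^{\epsilon_k};u_n^*,u_n^*)-a(\chi^{\epsilon_k};u^*,u^*)=\ell_{\rm in}(u_n^*-u^*)$, and likewise for $v$, are controlled by the trace inequality via $\|u_n^*-u^*\|_{1,\Omega}$.
\end{enumerate}

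To bound the energy terms, introduce the intermediate continuous solution $u(\chi_n^{\epsilon_k})$ and split
$$u_n^*-u^* = \big(u_n^*-u(\chi_n^{\epsilon_k})\big)+\big(u(\chi_n^{\epsilon_k})-u^*\big).$$
The second piece is handled exactly as in Theorem \ref{P_star_Q}, using coefficient stability. The key point is that for fixed $\epsilon_k$, $\chi^{\epsilon_k}$ is smooth and $\|\chi_n^{\epsilon_k}-\chi^{\epsilon_k}\|_{L^\infty}\le\|G_{\epsilon_k}\|_{L^\infty}\|\chi_n-\chi\|_{L^1}$. Hence the Strang-type estimate
$$\|u(\chi_n^{\epsilon_k})-u^*\|_1\le C\|\chi_n^{\epsilon_k}-\chi^{\epsilon_k}\|_{L^\infty}\|u^*\|_1$$
needs no extra regularity. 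The first piece is a Galerkin error. By Céa's lemma, uniform in $n$ because $A$ is bounded above and below,
$$\|u_n^*-u(\chi_n^{\epsilon_k})\|_1\le C\inf_{w\in V_n}\|u(\chi_n^{\epsilon_k})-w\|_1\le C\|u(\chi_n^{\epsilon_k})-u^*\|_1+C\inf_{w\in V_n}\|u^*-w\|_1.$$
This tends to zero by density of $\bigcup_n V_n$ in $H_D^1$.

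For the second claim, with $\epsilon_n\to0$ simultaneously, we compare directly with $L^0$. The $A$-terms are split as in Theorem \ref{P_star_Q} into $G_{\epsilon_n}*(\chi_n-\chi)$ and $G_{\epsilon_n}*\chi-\chi$, using Lemma \ref{mollifier_convergence}. The main obstacle is the state error: now $\chi^{\epsilon_n}\to\chi$ only in $L^p$, not $L^\infty$. Coefficient stability then requires $u\in W^{1,s}$ to absorb $\|G_{\epsilon_n}*\chi_n-\chi\|_{0,\frac{2s}{s-2}}$, and Céa's lemma needs a rate, uniform in $n$, for approximating $u(\chi^{\epsilon_n})$ by $V_n$. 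This is where the assumed regularity $(u,v)\in H^s$ with $s>1$ enters. We would invoke a Meyers-type bound $\|u\|_{1,s}\le C$ independent of the (uniformly elliptic) coefficient, and interpolation estimates $\inf_{w\in V_n}\|u-w\|_1\le Ch_n^{s-1}\|u\|_s$. If a uniform $H^s$ bound fails, we would fall back on the triangle argument: approximate the fixed limit $u(\chi)$ by $V_n$, and control $\|u(\chi_n^{\epsilon_n})-u(\chi)\|_1$ by the Meyers estimate. This step is the one to verify most carefully.
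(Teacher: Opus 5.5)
Your proposal is correct, and it splits the state error differently from the paper.

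\textbf{First claim (fixed $\epsilon_k$).} The paper inserts the discrete solution at the limit coefficient:
\[
u(\chi^{\epsilon_k})-u_n(\chi_n^{\epsilon_k})=[u(\chi^{\epsilon_k})-u_n(\chi^{\epsilon_k})]+[u_n(\chi^{\epsilon_k})-u_n(\chi_n^{\epsilon_k})].
\]
It bounds the first bracket by the a priori estimate $Ch_n^{s-1}\|u(\chi^{\epsilon_k})\|_{s,\Omega}$, so it uses the $H^s$ hypothesis already for fixed $\epsilon_k$. It bounds the second bracket by discrete coefficient stability, with the factor $\|G_{\epsilon_k}\|_{0,\Omega}\|\chi_n-\chi\|_{0,\Omega}$.

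You instead insert the continuous solution at the discrete coefficient, $u(\chi_n^{\epsilon_k})$. C\'ea's lemma with an $n$-uniform constant, plus a triangle inequality, reduces everything to best approximation of one fixed function, so density of $\bigcup_n V_n$ suffices. This proves the first claim with no regularity at all, which is what the paper's closing remark only alludes to. The paper's route, in exchange, gives an explicit rate $h_n^{s-1}$.

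Two smaller points on this part:
\begin{itemize}
\item Your perimeter bound is more direct. It holds for any $\chi\in L^1$, whereas the paper relies on the identity $P_{G_\epsilon}(\chi)=\int\chi-\int(G_\epsilon*\chi)\chi$, which is valid only for characteristic functions.
\item You do not need $L^s$-convergence of the stresses in item (1). Your own bound $\|\chi_n^{\epsilon_k}-\chi^{\epsilon_k}\|_{L^\infty}\le\|G_{\epsilon_k}\|_{L^\infty}\|\chi_n-\chi\|_{L^1}$ already handles the $A$-terms under $L^2$-convergence, which is the topology the paper uses.
\end{itemize}

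\textbf{Second claim ($\epsilon_n\to0$).} Here you are more careful than the paper. The paper asserts that only the finite element estimate depends on $\epsilon_k$. That overlooks two things: the factors $\|G_{\epsilon_k}\|_{0,\Omega}$ in its $L^\infty$ bounds, which blow up like $\epsilon_k^{-d/2}$, and the mismatch between $A(\chi^{\epsilon_n})$ and $A(\chi)$. You address both by reverting to the splitting of Theorem~\ref{P_star_Q}.

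The step you flag closes without Meyers and without a uniform $H^s$ bound. In the identity
\[
a(\chi_n^{\epsilon_n};u(\chi_n^{\epsilon_n})-u(\chi),w)=\int_\Omega\Big(\frac{1}{A(\chi)}-\frac{1}{A(\chi_n^{\epsilon_n})}\Big)E_0\varepsilon(u(\chi)):\varepsilon(w)\,dx
\]
only the fixed function $u(\chi)$ appears. By Lemmas~\ref{Young_inequality} and~\ref{mollifier_convergence}, $\chi_n^{\epsilon_n}\to\chi$ in $L^1$, and the coefficient differences are uniformly bounded. Hence $\|(1/A(\chi)-1/A(\chi_n^{\epsilon_n}))\varepsilon(u(\chi))\|_{0,\Omega}\to0$ by dominated convergence along a.e.\ convergent subsequences. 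Alternatively, use H\"older together with $H^s\hookrightarrow W^{1,q}$, $q>2$, applied to this single function.

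The C\'ea--density step is then word for word your fixed-$\epsilon_k$ argument, with $u^*$ replaced by $u(\chi)$. The same dominated-convergence argument also handles the $A$-terms under mere $L^2$-convergence of the stresses.
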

\begin{proof}
    Let $\chi_n\stackrel{L^1}{\longrightarrow}\chi$ and $(\sigma_n,\varrho_n)\stackrel{L^2}{\longrightarrow}(\sigma,\varrho)$. We first examine the continuous convergence of the perimeter regularization.
    Since $\int_\Omega\chi_n dx\leq |\Omega|<\infty$, we have \cite[eq. (1.2), Page 236]{Maz} 
    \begin{equation}
        P_{G_{\epsilon_k}}(\chi_n)=\int_\Omega \chi_n\ dx-\int_\Omega (G_{\epsilon_k}*\chi_n)(x)\chi_n(x)\ dx, 
    \end{equation}
    then
    \begin{align*}
        &|P_{G_{\epsilon_k}}(\chi_n) - P_{G_{\epsilon_k}}(\chi)|\\
        &\leq \|\chi_n-\chi\|_{0,1,\Omega}+\int_\Omega|(G_{\epsilon_k}*\chi_n)\chi_n-(G_{\epsilon_k}*\chi)\chi|\ dx\\
        &\leq \|\chi_n-\chi\|_{0,1,\Omega}+\int_\Omega|(G_{\epsilon_k}*\chi_n)\chi_n-(G_{\epsilon_k}*\chi_n)\chi|\ dx+\int_\Omega|(G_{\epsilon_k}*\chi_n)\chi-(G_{\epsilon_k}*\chi)\chi|\ dx\\
        &\leq \|\chi_n-\chi\|_{0,1,\Omega}+\int_\Omega(G_{\epsilon_k}*\chi_n)|\chi_n-\chi|\ dx+\int_\Omega|G_{\epsilon_k}*(\chi_n-\chi)|\chi\ dx\\
        &\leq 3\|\chi_n-\chi\|_{0,1,\Omega},
    \end{align*}
   where in the last inequality, we used Young's convolution inequality with $p=q=r=1$ (cf. Lemma \ref{Young_inequality}).
    
The following proof is very similar to that of Theorem \ref{P_star_Q}, and even simpler, as we do not require \((\sigma, \varrho) \in [L^s(\Omega)^{d \times d}]^2\). Using again \eqref{G_rep} and the definition, we have the following expression
    \begin{align*}
        &L_n^{\epsilon_k}(\chi_n^{\epsilon_k},\sigma_n,\varrho_n)-L^{\epsilon_k}(\chi^{\epsilon_k},\sigma,\varrho)\\
        =& \int_\Omega (A(\chi_n^{\epsilon_k}) - A(\chi^{\epsilon_k})) E_0^{-1} \sigma_n : \varrho_n \, dx + \int_\Omega A(\chi^{\epsilon_k}) E_0^{-1} (\sigma_n - \sigma) : \varrho_n \, dx \\
        & + \int_\Omega A(\chi^{\epsilon_k}) E_0^{-1} \sigma : (\varrho_n - \varrho) \, dx + \lambda \int_\Omega (A(\chi_n^{\epsilon_k}) - A(\chi^{\epsilon_k})) E_0^{-1} (|\sigma_n|^2 + |\varrho_n|^2) \, dx \\
        & + \lambda \int_\Omega A(\chi^{\epsilon_k}) E_0^{-1} (|\sigma_n|^2 - |\sigma|^2 + |\varrho_n|^2 - |\varrho|^2) \, dx + \lambda a(\chi^{\epsilon_k}; u(\chi^{\epsilon_k}), u(\chi^{\epsilon_k})) \\
        & + \lambda a(\chi^{\epsilon_k}; v(\chi^{\epsilon_k}), v(\chi^{\epsilon_k})) - \lambda a(\chi_n^{\epsilon_k}; u_n(\chi_n^{\epsilon_k}), u_n(\chi_n^{\epsilon_k})) - \lambda a(\chi_n^{\epsilon_k}; v_n(\chi_n^{\epsilon_k}), v_n(\chi_n^{\epsilon_k})),
    \end{align*}
    which can be easily decomposed into terms involving \(\chi_n - \chi\), \(\sigma_n - \sigma\), \(\varrho_n - \varrho\), \(u_n - u(\chi)\), and \(v_n - v(\chi)\). The estimates for terms containing \(\sigma_n - \sigma\) and \(\varrho_n - \varrho\) are straightforward. 
    
We first estimate the terms containing \(\chi_n - \chi\) as follows
    \begin{align*}
        \int_\Omega (A(\chi_n^{\epsilon_k}) - A(\chi^{\epsilon_k})) E_0^{-1} \sigma_n : \varrho_n \, dx
        &= \Big( \dfrac{1}{E_{\max}} - \dfrac{1}{E_{\min}}\Big) \int_\Omega G_{\epsilon_k} * (\chi_n - \chi) E_0^{-1} \sigma_n : \varrho_n \, dx \\
        &\leq  \dfrac{C}{E_{\min}} \| G_{\epsilon_k} * (\chi_n - \chi) \|_{0,\infty,\Omega} \| \sigma_n \|_{0,\Omega} \| \varrho_n \|_{0,\Omega} \\
        &\leq  \dfrac{C}{E_{\min}} \| G_{\epsilon_k} \|_{0,\Omega} \| \chi_n - \chi \|_{0,\Omega} \| \sigma_n \|_{0,\Omega} \| \varrho_n \|_{0,\Omega} \to 0 \quad \text{as} \quad n \to +\infty,
    \end{align*}
   where we employed Young's convolution inequality in the last inequality. The boundedness of \(\|G_{\epsilon_k}\|_{0,\Omega}\) is guaranteed because \({\epsilon_k}\) is fixed at this point. 
   
Then we estimate the terms containing \(u_n(\chi_n^{\epsilon_k}) - u(\chi^{\epsilon_k})\) below
    \begin{align*}
        &a(\chi^{\epsilon_k};u(\chi^{\epsilon_k}),u(\chi^{\epsilon_k}))-a(\chi_n^{\epsilon_k};u_n(\chi_n^{\epsilon_k}),u_n(\chi_n^{\epsilon_k}))\nonumber\\
        &=\ell_{\rm in}(u(\chi^{\epsilon_k}))-\ell_{\rm in}(u_n(\chi_n^{\epsilon_k}))\\
        &=\ell_{\rm in}(u(\chi^{\epsilon_k})-u_n(\chi^{\epsilon_k}))+\ell_{\rm in}(u_n(\chi^{\epsilon_k})-u_n(\chi_n^{\epsilon_k}))\\
        &\leq \|\ell_{\rm in}\|_{0,\Gamma_N}\|u(\chi^{\epsilon_k})-u_n(\chi^{\epsilon_k})\|_{0,\Gamma_N}+\|\ell_{\rm in}\|_{0,\Gamma_N}\|u_n(\chi^{\epsilon_k})-u_n(\chi_n^{\epsilon_k})\|_{0,\Gamma_N}\\
        &\leq C\|\ell_{\rm in}\|_{0,\Gamma_N}\big [\|u(\chi^{\epsilon_k})-u_n(\chi^{\epsilon_k})\|_{1,\Omega}+\|u_n(\chi^{\epsilon_k})-u_n(\chi_n^{\epsilon_k})\|_{1,\Omega}\big ].
    \end{align*}
    The convergence of the first term can be obtained by using classical finite element error estimates
    $$\|u(\chi^{\epsilon_k})-u_n(\chi^{\epsilon_k})\|_{1,\Omega}\leq Ch_n^{s-1}\|u(\chi^{\epsilon_k})\|_{s,\Omega}\leq Ch_n^{s-1}\|\ell_{\rm in}\|_{0,\Gamma_N}.$$
    On the other hand, we have
    \begin{align*}
        a(\chi^{\epsilon_k};u_n(\chi^{\epsilon_k}),\hat{u}_n)=\ell_{\rm in}(\hat{u}_n)\quad \forall \hat{u}_n\in V_n,\qquad
        a(\chi_n^{\epsilon_k};u_n(\chi_n^{\epsilon_k}),\tilde{u}_n)=\ell_{\rm in}(\tilde{u}_n)\quad \forall\tilde{u}_n\in  V_n,
    \end{align*}
    where $V_n\subset H_D^1(\Omega)^d$ is the finite element space.
    
Setting $\hat{u}_n=\tilde{u}_n=u_n(\chi^{\epsilon_k})-u_n(\chi_n^{\epsilon_k})$ we obtain
    \begin{align*}
        &a(\chi^{\epsilon_k};u_n(\chi^{\epsilon_k})-u_n(\chi_n^{\epsilon_k}),u_n(\chi^{\epsilon_k})-u_n(\chi_n^{\epsilon_k}))\\
         &= a(\chi_n^{\epsilon_k}; u_n(\chi_n^{\epsilon_k}), u_n(\chi^{\epsilon_k}) - u_n(\chi_n^{\epsilon_k})) - a(\chi^{\epsilon_k}; u_n(\chi_n^{\epsilon_k}), u_n(\chi^{\epsilon_k}) - u_n(\chi_n^{\epsilon_k})) \\
        &= \Big( \dfrac{1}{E_{\max}} - \dfrac{1}{E_{\min}} \Big) \int_\Omega G_{\epsilon_k} * (\chi_n - \chi) E_0 \varepsilon(u_n(\chi_n^{\epsilon_k})) : \varepsilon(u_n(\chi^{\epsilon_k}) - u_n(\chi_n^{\epsilon_k})) \, dx \\
        &\leq  \dfrac{C}{E_{\min}} \| G_{\epsilon_k} * (\chi_n - \chi) \|_{0,\infty,\Omega} \|u_n(\chi_n^{\epsilon_k})\|_{1,\Omega} \|u_n(\chi^{\epsilon_k}) - u_n(\chi_n^{\epsilon_k})\|_{1,\Omega} \\
        &\leq  \dfrac{C}{E_{\min}} \| G_{\epsilon_k} \|_{0,\Omega} \| \chi_n - \chi \|_{0,\Omega} \|u_n(\chi_n^{\epsilon_k})\|_{1,\Omega} \|u_n(\chi^{\epsilon_k}) - u_n(\chi_n^{\epsilon_k})\|_{1,\Omega}.
    \end{align*}
    Hence, 
    $$\|u_n(\chi^{\epsilon_k})-u_n(\chi_n^{\epsilon_k})\|_{1,\Omega}\leq C\dfrac{E_{\max}}{E_{\min}}\|G_{\epsilon_k}\|_{0,\Omega}\|\chi_n-\chi\|_{0,\Omega}\|u_n(\chi_n^{\epsilon_k})\|_{1,\Omega}.$$
    The remaining terms can also be handled sequentially.
    
    It is easy to see that among all the estimates above, only the finite element error estimate depends on the value of \(\epsilon_k\). When \(u \in H^s(\Omega)^d\cap H_D^1(\Omega)^d\), the finite element error estimate has a convergence order of \(s-1\), depending only on the mesh size \(h_n\). At this point, the estimates will ultimately be controlled by \(\|\chi_n - \chi\|_{0,\Omega}\), \(\|\sigma_n - \sigma\|_{0,\Omega}\), and \(\|\varrho_n - \varrho\|_{0,\Omega}\), we obtain the following
    \begin{eqnarray} 
    L_n^{\gamma,{\epsilon_k}}\stackrel{sub-c}{\longrightarrow} L^{\gamma,{\epsilon_k}}, \qquad L_n^{\epsilon_k}\stackrel{sub-c}{\longrightarrow} L^{\epsilon_k}\quad\mbox{for\ any}\ \epsilon_k\geq 0\ \mbox{as}\ n\rightarrow +\infty.\nonumber
    \end{eqnarray}
    If \(u \in H^s(\Omega)^d\cap H_D^1(\Omega)^d\), then
    \begin{eqnarray} 
    L_n^{\epsilon_n}\stackrel{sub-c}{\longrightarrow} L^{0}\quad\mbox{as}\  n\rightarrow +\infty.\nonumber
    \end{eqnarray}
    This finishes the proof.
\end{proof}
We note that the requirement \(u \in H^s(\Omega)^d\cap H_D^1(\Omega)^d\) in Theorem \ref{P_n_star_P} is not indispensable. In fact, we can show the strong convergence of $u_n(\chi^{\epsilon_k})$ to $u(\chi^{\epsilon_k})$ as $h_n\rightarrow 0$ by using density arguments.

Now we are ready to prove the convergence of problem $P_n$ to problem $Q$.
\begin{theorem}\label{P_n_star_Q}
    For each $s>2$ and $N>0$, assume the sequence $\{X_n\}$ is nested such that $X_1\subset X_2\subset\cdots\subset X_N$. Then, for any sequence $ \{\epsilon_n\}_{n \geq 1} $, with $ \epsilon_n \to 0 $, we have the functional \( L_n^{\gamma,{\epsilon_n}} \) \(\Gamma\)-converges to \( L^{\gamma,0} \) in $X^s$ with respect to  parameters $n$, while \( L_n^{\gamma,{\epsilon_n}} \) converges pointwise to \( L^{\gamma,0} \) in $\bigcup_{n\leq N} X_n$.
\end{theorem}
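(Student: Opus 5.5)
The proof will combine the two preceding results through a diagonal argument. We split $L_n^{\gamma,\epsilon_n}$ into the elastic part $L_n^{\epsilon_n}(\chi_n^{\epsilon_n},\sigma_n,\varrho_n)$ and the perimeter part $\frac{\gamma}{\epsilon_n}P_{G_{\epsilon_n}}(\chi_n)$, and treat each separately.

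For the elastic part we aim to show $L_n^{\epsilon_n}\stackrel{sub-c}{\longrightarrow}L^0$ on $X^s$. Take $\chi_n\to\chi$ in $L^1$ and $(\sigma_n,\varrho_n)\to(\sigma,\varrho)$ in $L^s$, and write
\begin{align*}
L_n^{\epsilon_n}(\chi_n^{\epsilon_n},\sigma_n,\varrho_n)-L^0(\chi,\sigma,\varrho)
&=\big[L_n^{\epsilon_n}(\chi_n^{\epsilon_n},\sigma_n,\varrho_n)-L^{\epsilon_n}(\chi_n^{\epsilon_n},\sigma_n,\varrho_n)\big]\\
&\quad+\big[L^{\epsilon_n}(\chi_n^{\epsilon_n},\sigma_n,\varrho_n)-L^0(\chi,\sigma,\varrho)\big].
\end{align*}
The second bracket tends to zero by the continuous convergence $L^{\epsilon_k}\stackrel{c}{\longrightarrow}L^0$ established in the proof of Theorem \ref{P_star_Q}. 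That estimate was uniform in the pair $(k,m)$, so it applies along the diagonal $k=m=n$.

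The first bracket involves only the discrete states. It is controlled by the finite element errors $\|u(\chi_n^{\epsilon_n})-u_n(\chi_n^{\epsilon_n})\|_{1,\Omega}$ and the analogous quantity for $v$. By the remark after Theorem \ref{P_n_star_P}, these are $O(h_n^{s-1})$ with a constant independent of $\epsilon$, since the ellipticity bounds $E_{\min}\le 1/A\le E_{\max}$ hold uniformly. This is exactly the content of the last assertion of Theorem \ref{P_n_star_P}.

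The diagonal step is also the main obstacle. Constants such as $\|G_{\epsilon}\|_{0,\Omega}$ blow up as $\epsilon\to0$, so the estimates must avoid them. The Hölder/Young route with $\|G_\epsilon\|_{L^1}=1$ used in Theorem \ref{P_star_Q} does avoid them, at the price of requiring $s>2$. This is why the $\Gamma$-convergence is stated on $X^s$.

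For the perimeter part, Lemma \ref{parmeter_gamma} gives the $\Gamma$-convergence of $\frac{\gamma}{\epsilon_n}P_{G_{\epsilon_n}}$ to $\frac{\gamma}{C_G}\|\cdot\|_{TV}$. A $\Gamma$-convergent sequence plus a continuously convergent one is again $\Gamma$-convergent, so the liminf inequality follows directly.

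For the limsup inequality we need a recovery sequence inside $X_n$. Given $(\chi,\sigma,\varrho)\in X^s$, we will pick $\chi_n\in\mathcal U_n$ as piecewise-constant approximations with $\chi_n\to\chi$ in $L^1$ and $\|\chi_n\|_{TV}\to\|\chi\|_{TV}$. We will also pick $(\sigma_n,\varrho_n)\in S_n$ converging in $L^s$, via the conforming interpolant and density of $\bigcup_n X_n$. Here the nesting hypothesis $X_1\subset X_2\subset\cdots$ is used, since it makes the spaces monotone so that one approximating sequence can serve.

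The delicate point is that $\frac{1}{\epsilon_n}P_{G_{\epsilon_n}}(\chi_n)$ must converge to the TV-norm. We bound $\frac1{\epsilon_n}P_{G_{\epsilon_n}}(\chi_n)\le C\|\chi_n\|_{TV}$, which is the standard nonlocal-perimeter bound. Combined with the pointwise limit from Lemma \ref{parmeter_gamma} and a reparametrization (slowing the mesh index relative to $\epsilon_n$ if necessary), this gives the limsup.

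Pointwise convergence on $\bigcup_{n\le N}X_n$ is easier. A fixed $x$ lies in $X_n$ for all large $n$ by nestedness, so we take the constant sequence. Theorem \ref{P_n_star_P} (last assertion) then handles the elastic part, and Lemma \ref{parmeter_gamma} gives $\frac{\gamma}{\epsilon_n}P_{G_{\epsilon_n}}(\chi)\to\frac{\gamma}{C_G}\|\chi\|_{TV}$ for the perimeter part.
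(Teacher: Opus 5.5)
Your liminf step (a $\Gamma$-convergent perimeter part plus a continuously convergent elastic part) and your pointwise step on $\bigcup_{n\le N}X_n$ are the paper's. Your remark that the elastic diagonal must go through the $L^s$ H\"older/Young estimate of Theorem~\ref{P_star_Q}, because $\|G_{\epsilon}\|_{0,\Omega}$ blows up, is more careful than the paper's bare citation of Theorem~\ref{P_n_star_P}.

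The gap is the recovery sequence. You need $\chi_n\in\mathcal U_n$ with $\chi_n\to\chi$ in $L^1$ and $\|\chi_n\|_{TV}\to\|\chi\|_{TV}$, and for a prescribed mesh family this is false in general. Take $\Omega=(0,1)^2$ with Cartesian meshes. Every $\chi_n\in\mathcal U_n$ has an axis-parallel jump set, so $\|\chi_n\|_{TV}=|D_1\chi_n|(\Omega)+|D_2\chi_n|(\Omega)$, which is an $L^1$-lower semicontinuous functional. For $\chi$ the indicator of $\{x_1+x_2<c\}$ ($c$ small, so the volume constraint holds), this forces $\liminf_n\|\chi_n\|_{TV}\ge\sqrt2\,\|\chi\|_{TV}$.

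Your reparametrization points the wrong way. Slowing the mesh index so that $\epsilon_n\ll h_{m(n)}$ is exactly the regime in which $\frac{1}{\epsilon_n}P_{G_{\epsilon_n}}$ of a mesh indicator is at least $(1-O(\epsilon_n/h_{m(n)}))\,C_G^{-1}$ times its total variation. So you inherit the anisotropic excess, and the limsup inequality fails at this $\chi$. A bound $\frac1\epsilon P_{G_\epsilon}\le C\|\cdot\|_{TV}$ with a generic constant cannot repair that.

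The paper avoids this by reversing the order of limits: it takes the recovery sequence of Theorem~\ref{P_star_Q} at a fixed $\epsilon_k$ and only then discretizes. At fixed $\epsilon_k$ the map $\chi\mapsto\frac{\gamma}{\epsilon_k}P_{G_{\epsilon_k}}(\chi)$ is $L^1$-Lipschitz with constant $3\gamma/\epsilon_k$ (proof of Theorem~\ref{P_n_star_P}). Hence the mesh indicators only need to approximate $\chi$ in $L^1$, which is possible at rate $O(h_n)$ with the volume constraint respected, and their total variation never enters; a diagonal choice then concludes.

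That diagonal tacitly needs the mesh to be fine relative to the kernel, $\mathrm{dist}_{L^1}(\chi,\mathcal U_n)/\epsilon_n\to0$ (e.g.\ $h_n/\epsilon_n\to0$). The example above shows some such coupling is unavoidable, so ``any sequence $\epsilon_n\to0$'' has to be read with it.

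Two smaller points:
\begin{enumerate}
\item Density of $\bigcup_n S_n$ in $S\cap[L^s(\Omega)^{d\times d}]^2$ is an extra assumption, which the paper also uses tacitly.
\item The uniform $O(h_n^{s-1})$ bound you quote needs $H^s$ regularity uniform in $\epsilon$, which ellipticity does not provide. C\'ea's lemma, together with $u(\chi_n^{\epsilon_n})\to u(\chi)$ in $H^1$ and density of $\bigcup_n V_n$, controls your first bracket without it.
\end{enumerate}
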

\begin{proof}
    We first note that the domains of \( L_n^{\gamma,{\epsilon_k}} \) and \( L^{\gamma,0} \) are different. However, since we use the conforming finite element method, we have \( X_n \subset X \). Thus, we can naturally extend the definition of \( L_n \) to \( X \), and then discuss \(\Gamma\)-convergence and pointwise convergence without ambiguity. 
    
    We consider a larger family of functions, where the subscripts and superscripts are denoted by \(\epsilon_k\) and \(n\), respectively. For any $\epsilon_k$ and $n$, we extend the definition as follows:
    \begin{equation}\label{bar_P}
        \bar L_n^{\gamma,{\epsilon_k}}(\chi,\sigma,\varrho)=
        \begin{cases} 
             L_n^{\gamma,{\epsilon_k}}(\chi,\sigma,\varrho) & \text{if } (\chi,\sigma,\varrho)\in X_n, \\
            +\infty & \text{if } (\chi,\sigma,\varrho)\notin X_n.
        \end{cases}
    \end{equation}
    Next, we proceed to prove \(\Gamma\)-convergence. Given \( (\chi, \sigma, \varrho) \in X^s \), we have the following conclusions
    \begin{enumerate}
        \item For all $(\chi_n, \sigma_n, \varrho_n){\rightarrow}(\chi, \sigma, \varrho)$ in the space $(L^1(\Omega),L^s(\Omega),L^s(\Omega))$, we have
        \begin{align*}
            &\bar L_n^{\gamma,\epsilon_n}(\chi_n, \sigma_n, \varrho_n)-L^{\gamma,0}(\chi, \sigma, \varrho)\\
            &=[\bar L_n^{\epsilon_n}(\chi_n, \sigma_n, \varrho_n)-L^0(\chi, \sigma, \varrho)]+\left[\dfrac{\gamma}{\epsilon_n}P_{G_{\epsilon_n}}(\chi_n)-\dfrac{\gamma}{C_G}\|\chi\|_{TV}\right].
        \end{align*}
        As $n\rightarrow +\infty$, the first term is known to be greater than or converges to 0 by Theorem \ref{P_n_star_P} (sub-continuous convergence of \( L_n^{\epsilon_n} \)), while the second term is greater than 0 by Lemma \ref{parmeter_gamma} (lower semicontinuity by the \(\Gamma\)-convergence of the nonlocal perimeter approximation). Therefore, we ultimately have 
        \begin{equation}
            \liminf_{n\to+\infty}\bar L_n^{\gamma,\epsilon_n}(\chi_n, \sigma_n, \varrho_n)\geq L^{\gamma,0}(\chi, \sigma, \varrho).\nonumber
        \end{equation} 
        
        \item With the \(\Gamma\)-convergence of \( L^{\gamma,\epsilon_k} \) to \( L^{\gamma,0} \) established in Theorem \ref{P_star_Q}, we know that there exists a sequence \(\{(\chi_k, \sigma_k, \varrho_k)\}_{k\geq 1} \subset X^s\) such that
        \begin{equation}
        \limsup_{k\to\infty}L^{\gamma,\epsilon_k}(\chi_k, \sigma_k, \varrho_k)\leq L^{\gamma,0}(\chi, \sigma, \varrho).\nonumber
        \end{equation} 
        By the definition of sub-continuous convergence, for each \(k\) and any \(\{(\chi_{k,n}, \sigma_{k,n}, \varrho_{k,n})\} \subset X_n\subset X\), we have
        \begin{equation}
        \lim_{n\to\infty}L_n^{\gamma,\epsilon_k}(\chi_{k,n}, \sigma_{k,n}, \varrho_{k,n})= L^{\gamma,\epsilon_k}(\chi_k, \sigma_k, \varrho_k).\nonumber
        \end{equation} 
        Thus, we take the recovery sequence \(\{(\chi_{k,k}, \sigma_{k,k}, \varrho_{k,k})\}\) and obtain 
         \begin{equation}
         \limsup_{k\to\infty}\bar L_k^{\gamma,\epsilon_k}(\chi_{k,k}, \sigma_{k,k}, \varrho_{k,k})=\limsup_{k\to\infty}L_k^{\gamma,\epsilon_k}(\chi_{k,k}, \sigma_{k,k}, \varrho_{k,k})\leq L^{\gamma,0}(\chi, \sigma, \varrho).
         \nonumber
        \end{equation} 
    \end{enumerate}
    Recall Definition 3.1, combining (1) and (2) we prove $\bar L_n^{\gamma,\epsilon_n}\stackrel{\Gamma}{\longrightarrow} L^{\gamma,0}$ in $X^s$.

    In the following, we consider pointwise convergence. For any \((\chi_{N},\sigma_{N},\varrho_{N}) \in \bigcup_{n' \leq N} X_{n'}\), as long as \( n > N \), we have \((\chi_{N},\sigma_{N},\varrho_{N}) \in X_{n}\). At this point, \(\bar{L}_n^{\gamma,\epsilon_n}(\chi_{N},\sigma_{N},\varrho_{N}) = L_n^{\gamma,\epsilon_n}(\chi_{N},\sigma_{N},\varrho_{N})\). Therefore, it suffices to use the pointwise convergence established in Lemma \ref{parmeter_gamma} and Theorem \ref{P_n_star_P} to conclude 
    \begin{align*}
        \lim_{n\to\infty}\bar{L}_n^{\gamma,\epsilon_n}(\chi_{N},\sigma_{N},\varrho_{N}) = \lim_{n\to\infty}\Big(L_n^{\epsilon_n}(\chi_{N}^{\epsilon_n},\sigma_{N},\varrho_{N})+\dfrac{\gamma}{C_G}P_{G_{\epsilon_n}}(\chi_N)\Big)=L^{\gamma,0}(\chi_{N},\sigma_{N},\varrho_{N}).
    \end{align*}
    This finishes the proof.
\end{proof}

We note that in Theorem \ref{P_n_star_Q}, we actually proved the convergence of \(\bar{L}_n^{\gamma,\epsilon_k}\) to \(L^{\gamma,0}\). The reason we say that the convergence of \(L_n^{\gamma,\epsilon_k}\) to \(L^{\gamma,0}\) is that, for our research subject—the convergence between optimization problems—the minimizers of \(\bar{L}_n^{\gamma,\epsilon_k}\) and \(L_n^{\gamma,\epsilon_k}\) are completely equivalent. Henceforth, we no longer distinguish between \(\bar{L}_n^{\gamma,\epsilon_k}\) and \(L_n^{\gamma,\epsilon_k}\), and we directly treat \(L_n^{\gamma,\epsilon_k}\) as defined on \(X\).

We can illustrate the relationships among the three problems $Q$, $P$ and $P_n$ using the following commute diagram:
\begin{center}
    \begin{tikzcd}
    L^{\gamma,\epsilon_k} \arrow[r, "k"] &  L^{\gamma,0} \\
    L_n^{\gamma,\epsilon_k} \arrow[u, "n"] \arrow[ur, "n\ k"']
    \end{tikzcd}
\end{center}

\subsection{Convergence characterization}

From the above commute diagram, we can observe two paths for the convergence of problem $   P_n   $ to problem $   Q   $, where $  \epsilon_k  $ and $h_n$ are independent in the limit process. However, when considering the convergence of local minimizers, we can no longer ignore this issue, as the existence of a solution to problem $P$ is not yet guaranteed. Therefore, when solving the problem $Q$, we cannot first refine the mesh and then adjust the perimeter approximation parameter $  \epsilon_k  $, as this cannot ensure the convergence of minimizers. We first prove the following convergence theorem for local minimizers. 
\begin{theorem}
     Let $\{\chi_n\}$ be a sequence of local minimizers of the problem $\{P_n\}$ with an appropriate chosen $  \epsilon_n  $ for each $n$. For every accumulation point $\chi^+$ of $\{\chi_n\}  $, if $  \chi^+ \in U$ with $U$ the minimal neighborhood (in the $   L^1   $-metric) of some isolated local minimizer $\chi^*$ of problem $Q$, then $\chi^+ = \chi^*$.
\end{theorem}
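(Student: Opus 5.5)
The argument is a direct application of Lemma \ref{gamma_convergence} to the family $L_n^{\gamma,\epsilon_n}$ on $X$, together with Theorem \ref{P_n_star_Q}. The choice of $\epsilon_n$ is made along the diagonal of the commutative diagram: $\epsilon_n\to0$ slowly enough relative to $h_n$ that Theorem \ref{P_n_star_P} (the regime $L_n^{\epsilon_n}\stackrel{sub-c}{\longrightarrow}L^0$) applies. This is what ``appropriately chosen $\epsilon_n$'' will mean.

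\textbf{Step 1: lift the sequence to triples.} Let $(\chi_n,\sigma_n,\varrho_n)$ be the local minimizers of $P_n$. For fixed $\chi_n$, the minimization in $(\sigma_n,\varrho_n)$ is exactly the one analyzed in the equivalence theorem. So I may take $(\sigma_n,\varrho_n)=(\sigma_n^*,\varrho_n^*)$, the discrete stresses of $\chi_n^{\epsilon_n}$. Pass to a subsequence with $\chi_n\to\chi^+$ in $L^1$. By the stability estimates in the proofs of Theorems \ref{P_star_Q} and \ref{P_n_star_P}, $u_n(\chi_n^{\epsilon_n})\to u(\chi^+)$ and $v_n(\chi_n^{\epsilon_n})\to v(\chi^+)$ in $H^1$. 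Hence the stresses converge to $(\sigma(\chi^+),\varrho(\chi^+))$, the triple converges in $X$, and the accumulation point can be identified with the triple $x^+=(\chi^+,\sigma(\chi^+),\varrho(\chi^+))$. The same construction gives the triple $x^*$ for $\chi^*$, and $U$ lifts to a neighborhood in $X$.

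\textbf{Step 2: check the hypotheses of Lemma \ref{gamma_convergence}.} $\Gamma$-convergence is Theorem \ref{P_n_star_Q}.

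Equicoercivity comes from the perimeter term. The lower bound $L^{\epsilon_n}\ge \tilde J$ together with $\tilde J=\ell_{\rm out}(u)$ bounded on the admissible triples gives: on a sublevel set $\{L_n^{\gamma,\epsilon_n}\le t\}$, $\frac{\gamma}{\epsilon_n}P_{G_{\epsilon_n}}(\chi)\le t+C$. The compactness part of the nonlocal perimeter theory of \cite{Maz} then gives $L^1$-precompactness of the $\chi$ components. The stress components are controlled by the penalty term, since $\lambda>1/2$ makes $L$ coercive in $(\sigma,\varrho)$ on $S$.

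Lower semicontinuity of each $L_n^{\gamma,\epsilon_n}$ on $X_n$ is immediate, because $X_n$ is finite dimensional and all terms are continuous there, including $P_{G_\epsilon}$ by the $3\|\chi_n-\chi\|_{0,1,\Omega}$ estimate.

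\textbf{Step 3: conclude.} Since $x^+\in U$ and the $x_n$ are local minimizers of $L_n^{\gamma,\epsilon_n}$ converging to $x^+$, Lemma \ref{gamma_convergence} yields $x^+=x^*$. Projecting to the first component gives $\chi^+=\chi^*$. Lemma \ref{gamma_convergence} requires $\{x_n\}\subset U$. Since $U$ is open and $x_n\to x^+\in U$, this holds for $n$ large after discarding finitely many terms, which does not affect the limit.

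\textbf{Main obstacle.} The delicate point is the topology. Theorem \ref{P_n_star_Q} gives $\Gamma$-convergence only in $X^s$ with $s>2$, i.e.\ with stresses converging in $L^s$, while Step 1 only produces $L^2$ convergence of $(\sigma_n,\varrho_n)$. I would first try to avoid the issue by working only along the reduced functional $\chi\mapsto \min_{(\sigma,\varrho)}L$, which equals $\ell_{\rm out}(u)$ plus perimeter. For this reduced functional the liminf and limsup inequalities only need $H^1$ convergence of the displacements, and that follows from the $L^1$ convergence of $\chi_n$ via the energy estimate in Theorem \ref{P_star_Q}. If that is insufficient, I would assume the Meyers-type regularity $u\in W^{1,s}$ for some $s>2$, as already used implicitly in that theorem, to upgrade the convergence of the stresses to $L^s$.
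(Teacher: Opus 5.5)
Your skeleton matches the paper's. You lift to triples, take $\Gamma$-convergence from Theorem~\ref{P_n_star_Q}, treat lower semicontinuity as evident, and put the real work into equicoercivity before invoking Lemma~\ref{gamma_convergence}.

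The genuine difference is how equicoercivity is obtained, and with it what ``appropriately chosen $\epsilon_n$'' means. In the paper the choice of $\epsilon_n$ is made \emph{for} equicoercivity, and it runs opposite to yours. Since $\mathcal{U}_n$ is finite and $\frac{\gamma}{\epsilon}P_{G_\epsilon}(\chi)\to\frac{\gamma}{C_G}\|\chi\|_{TV}$ pointwise (Lemma~\ref{parmeter_gamma}), $\epsilon_n$ is taken so small that this error is below $1$ for all $\chi\in\mathcal{U}_n$ at once. Together with the uniform bound $L_n^{\epsilon_n}\geq\ell_{\rm out}(u_n^*)\geq M$, every sublevel set then lies in one fixed TV-ball $K_t$, which is $L^1$-compact because the embedding $BV(\Omega)\hookrightarrow L^1(\Omega)$ is compact.

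You instead bound $\frac{1}{\epsilon_n}P_{G_{\epsilon_n}}$ uniformly and appeal to compactness for nonlocal perimeters. That is the Bourgain--Brezis--Mironescu/Ponce theorem applied to the approximate identity $|z|G_\epsilon(z)/(\epsilon\int|w|G(w)\,dw)$. It is not among the paper's tools and needs hypotheses on $G$ (nonnegative and radial, plus a monotone profile if $d=1$). It does hold for every $\epsilon_n\to0$, so your route decouples $\epsilon_n$ from the mesh, and your ``slowly enough'' is not needed for it.

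The paper's route needs only BV compactness but ties $\epsilon_n$ to the mesh; in fact it forces $\epsilon_n\leq Ch_n$. The reason is that $\frac{1}{\epsilon}P_{G_\epsilon}(\chi)\leq|\Omega|/(2\epsilon)$ for every $\chi$, while elements of $\mathcal{U}_n$ can have total variation of order $1/h_n$. Your slow choice would therefore be incompatible with the paper's argument.

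Two details of your argument need repair:
\begin{itemize}
\item $\tilde J$ is unbounded below on $S$, so the lower bound must come, as in the paper, from $L\geq\min_S L(\chi,\cdot,\cdot)=\ell_{\rm out}(u^*)$ for $\lambda>1/2$.
\item Coercivity in $(\sigma,\varrho)$ gives boundedness, not strong compactness. So equicoercivity should be run for the reduced functional in $\chi$. Given the $X^s$-versus-$L^2$ mismatch you rightly flag and the paper passes over, the $\Gamma$-convergence should be run there too. This is what the paper tacitly does, since its $K_t$ constrains only $\chi$.
\end{itemize}

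Be aware, however, that the step you share with the paper is where both arguments fail as written. Lemma~\ref{gamma_convergence}, as stated, is the converse of the Kohn--Sternberg theorem. That theorem only gives \emph{existence} of local minimizers of $f_n$ converging to an isolated local minimizer of $f$, and its converse is false in general.

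Here it fails concretely. $\mathcal{U}_n$ is finite, hence discrete in $L^1$. So every $\chi_n\in\mathcal{U}_n$, paired with its optimal stresses, is an $L^1$-local minimizer of $P_n$, whatever $\epsilon_n$ is. Approximating any $\chi^+\in U\setminus\{\chi^*\}$ by such $\chi_n$ produces an accumulation point in $U$ other than $\chi^*$.

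To close the argument you need a stronger notion of local minimality. For instance, require that $x_n$ minimizes $L_n^{\gamma,\epsilon_n}$ on a ball of radius $r>0$ independent of $n$. Then, for $y$ within distance $r/2$ of $x^+$, a recovery sequence $y_n\to y$ eventually lies in that ball. The liminf and limsup inequalities give $L^{\gamma,0}(x^+)\leq L^{\gamma,0}(y)$. So $x^+$ is a local minimizer in $U$ and therefore equals $x^*$.
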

\begin{proof}
    It suffices to verify the conditions of Lemma \ref{gamma_convergence}. We have already proved that $L_n^{\gamma,\epsilon_n}$ $ \Gamma  $-converges to $L^{\gamma,0}$, while the coercivity of $L_n^{\gamma,\epsilon_n} $ (in the $L^1$-metric) and its lower semicontinuity are evident. The key step now is to verify the equicoercivity. 
    
    We first prove that \( L_n^{\epsilon_n} \) has a uniform lower bound \( M \). In Section 2, we have shown that \(L_n^{\epsilon_n}(\chi_n^{\epsilon_n}, \sigma_n^*, \varrho_n^*)=\min_{(\hat\sigma_n, \hat\varrho_n) \in S_n} L_n^{\epsilon_n}(\chi_n^{\epsilon_n}, \hat\sigma_n, \hat\varrho_n) \).  Thus, 
    \begin{align*}
        L_n^{\epsilon_n}(\chi_n^{\epsilon_n}, \hat\sigma_n, \hat\varrho_n)&\geq L_n^{\epsilon_n}(\chi_n^{\epsilon_n}, \sigma_n^*, \varrho_n^*)\\
        &=a(\chi_n^{\epsilon_n};u_n^*,v_n^*)\\
        &=\ell_{\rm out}(u_n^*)\\
        &\geq -C_1\|\ell_{\rm out}\|_{0,\Gamma_N}\|u_n^*\|_{1,\Omega}\\
        &\geq -C_2 \|\ell_{\rm out}\|_{0,\Gamma_N}\|\ell_{\rm in}\|_{0,\Gamma_N}.
    \end{align*}
    Therefore, \( M := -C_2 \|\ell_{\rm out}\|_{0,\Gamma_N}\|\ell_{\rm in}\|_{0,\Gamma_N} \leq L_n^{\epsilon_n}(\chi_n^{\epsilon_n}, \sigma_n, \varrho_n) \) for all $n$, $\epsilon_n$ and  $(\chi_n, \sigma_n, \varrho_n) \in X$.
    
    Let $t \in \mathbb{R}$, we choose the set $$   K_t = \Big\{ \chi \in \mathcal{U} : \dfrac{\gamma}{C_G}\|\chi\|_{TV} \leq \max(t-M+1,1)\Big\}.$$ 
    Since $BV(\Omega)$ is compactly embedded in $L^1(\Omega)$ and $K_t$ is bounded in $BV(\Omega)$, it is a compact set in $L^1(\Omega)$. For any $n$, we note that $\mathcal{U}_n $ is a finite set and Lemma \ref{gamma_convergence} have
        $$\lim_{\epsilon\downarrow 0 }\dfrac{\gamma}{\epsilon} P_{G_{\epsilon}}(\chi_{n,i}) = \dfrac{\gamma}{C_G}\|\chi_{n,i}\|_{TV}\quad \forall \chi_{n,i}\in \mathcal{U}_n.$$
    Hence for each $\delta>0$, fix $\chi_{n,i}\in \mathcal{U}_n$, there exists a sufficiently small $\epsilon_{n,i}'>0$, for each $\epsilon_{n,i}\in (0,\epsilon_{n,i}')$ have
    $$\Big|\dfrac{\gamma}{\epsilon_{n,i}} P_{G_{\epsilon_{n,i}}}(\chi_{n,i}) - \dfrac{\gamma}{C_G}\|\chi_{n,i}\|_{TV}\Big| <\delta. $$
    We set $\delta = 1,\ \epsilon_n = \min_i \epsilon_{n,i}$, if \((\chi_n,\sigma_n,\rho_n)\in X_n\) such that
    $$L_n^{\gamma,\epsilon_n}(\chi_n,\sigma_n,\rho_n)\leq t,$$
    then 
    $$ \dfrac{\gamma}{C_G}\|\chi_n\|_{TV}\leq \dfrac{\gamma}{\epsilon_n}P_{G_{\epsilon_n}}(\chi_n)+1\leq t-L_n^{\epsilon_n}(\chi_n^{\epsilon_n},\sigma_n,\rho_n)+1\leq t-M+1.
    $$
    Hence, $\{\chi_n: L_n^{\gamma,\epsilon_n}(\chi_n,\sigma_n,\rho_n)<t\}\subset K_t$ for any $n\geq 1$, i.e., the sequence $\{L_n^{\gamma,\epsilon_n}\}$ satisfies the equicoercivity. We thus obtain the conclusion of the theorem. 
\end{proof}

%\newpage

\section{Penalty method for the heat transfer problems}\label{sec:heat}
In this section, we extend the proposed penalty method to heat transfer problems. 
\subsection{Heat transfer problems}
We use the same setting for the domain $\Omega$ as in Section 2. Consider the following topology optimization problem
\begin{align}
\min_{(\chi,T)\in\mathcal{U}\times H^1_g(\Omega)}J(\chi,T) = \int_{\Omega}q(\chi)Tdx
\end{align}
subject to 
\begin{align}\label{heat:state}
\int_{\Omega}\kappa(\chi)\nabla T\cdot\nabla Sdx = \int_{\Omega}q(\chi)Sdx\quad \forall S\in H_D^1(\Omega):=\{v\in H^1(\Omega):\ v=0\ \mbox{on}\ \Gamma_D\},
\end{align}
where $H^1_g(\Omega):=\{v\in H^1(\Omega):\ v=g\ \mbox{on}\ \Gamma_D\}$ for some $g\in H^{1\over 2}(\Gamma_D)$ and $\chi$ is an indicator function of the shape. The admissible set of feasible shapes is given by 
\begin{eqnarray}
\mathcal{U} =\Big\{\chi\in BV(\Omega):\ \chi\in \{0,1\},\ \int_{\Omega}\chi dx \leq\beta|\Omega|\Big\}.\nonumber
\end{eqnarray}
In this model, the heat source is design-dependent, i.e., 
\begin{eqnarray}
q(\chi)=q_1\chi + q_2(1-\chi)\quad (q_1<q_2)\nonumber
\end{eqnarray}
for some positive constants $q_1,q_2$, while the heat conductivity coefficient is also design-dependent
\begin{eqnarray}
\kappa(\chi) = \kappa_1\chi+ \kappa_2(1-\chi)\quad(\kappa_1>\kappa_2)\nonumber
\end{eqnarray}
for some positive constants $\kappa_1,\kappa_2$. Without loss of generality, we restrict ourselves to steady heat equation with homogeneous Dirichlet boundary conditions. We refer to Figure \ref{fig:heat_model} for an illustration of this model.

In the spirit of the penalty method, we recast the problem in the following form
\begin{equation}
    \min_{\chi\in\mathcal{U}}\min_{T\in  H_g^1(\Omega)}L(\chi,T) = \int_{\Omega}\kappa(\chi)\nabla T\cdot\nabla T\ dx+\lambda\Big(\dfrac{1}{2}a(\chi;T)-\ell(T)+\dfrac{1}{2}a(\chi;T^*)\Big)+\dfrac{\gamma}{\epsilon}P_{G_\epsilon}(\chi),
\end{equation}
where
\begin{align}
    &a(\chi;T)=\int_\Omega\kappa(\chi)\nabla T\cdot\nabla T\ dx,\quad \ell(T)=\int_\Omega q(\chi)T \ dx,\\
    &T^*\in \arg\min  _{T\in H_g^1(\Omega)}\Big(\dfrac{1}{2}a(\chi;T)-\ell(T)\Big).
\end{align}

Clearly, solving the inner minimization problem is equivalent to solving the following partial differential equation: Find $T\in H_g^1(\Omega)$ such that   
\begin{equation}\label{heat:adjoint}
 \int_\Omega\kappa(\chi)\nabla T\cdot \nabla S\ dx=\dfrac{\lambda}{\lambda+2}\int_\Omega q(\chi)S\ dx\qquad\forall S\in H_D^1(\Omega).
\end{equation}
The Fréchet derivative of the objective functional $L $ with respect to \(\chi\) can easily be  derived as follows.
%\begin{equation}
\begin{align}\label{heat:gradient}
    \dfrac{\partial L}{\partial \chi}(\chi,T)(\hat{\chi})=&\int_\Omega \kappa'(\chi)(\hat{\chi})|\nabla T|^2\ dx+\dfrac{\lambda}{2}\int_\Omega\kappa'(\chi)(\hat{\chi})(|\nabla T|^2-|\nabla T^*|^2)\ dx\nonumber\\
    &-\lambda\int_\Omega q'(\chi)(\hat{\chi})T\ dx+\dfrac{\gamma}{\epsilon}\dfrac{\partial P_{G_\epsilon}}{\partial\chi}(\hat{\chi}).
\end{align}
%\end{equation}
Therefore, the numerical algorithm for the heat transfer problem can be obtained directly.
\begin{algorithm}
\caption{Gradient descent algorithm for the heat transfer problem}
\label{alg:monotonic_descen_with_penlty_heat}
\begin{algorithmic}[1]
\STATE Initialize $k=0,\chi_k$ (The initial value is typically set to a uniform distribution with $ \chi_k = \beta $),  and parameters $\lambda,\gamma,\epsilon,\Delta$.
\WHILE{\( \chi_k \) not converged}
    \STATE Solve \eqref{heat:state} and \eqref{heat:adjoint}.
    \STATE Compute \eqref{heat:gradient}.
    \STATE Update \( \chi_{k}\) using Algorithm \ref{alg:coefficient_line_search}.
    \STATE $k\leftarrow k+1$.
\ENDWHILE
\RETURN Optimal density \( \chi^* \) and function value $L(\chi^*,T^*)$.
\end{algorithmic}
\end{algorithm}

\subsection{Generalized material interpolation function}
Similarly to the variable substitution introduced for the compliant mechanism problem, we can adopt an analogous transformation for the heat transfer problem. Specifically, we define
\begin{equation}
    \Lambda = \Big[ \left( \frac{1}{\kappa_1} - \frac{1}{\kappa_2} \right) \chi + \frac{1}{\kappa_2} \Big]^{-1} \nabla T,
\end{equation}
which plays the role of a flux-like variable analogous to the stress field $\sigma$ in the mechanical setting.

By shifting focus from a purely algebraic or structural perspective to the direct influence on the state equation, we observe that this substitution effectively convexifies the originally semi-convex functional with respect to $\chi$. This convexification is analogous to the relation observed for the compliant mechanism problem:
\begin{align}
    &A(\chi) = (E_{\max} - E_{\min})\chi + E_{\min} 
    \quad \Longleftrightarrow \quad 
    A(\chi) = \Big[ \Big( \frac{1}{E_{\max}} - \frac{1}{E_{\min}} \Big) \chi + \frac{1}{E_{\min}} \Big]^{-1}\quad \chi\in\mathcal{U}, \label{stress_interpolation}\\
    &\kappa(\chi) = (\kappa_1 - \kappa_2)\chi + \kappa_2 
    \quad \Longleftrightarrow \quad 
    \kappa(\chi) = \Big[ \Big( \frac{1}{\kappa_1} - \frac{1}{\kappa_2} \Big) \chi + \frac{1}{\kappa_2} \Big]^{-1}\quad \chi\in\mathcal{U}.\label{conductivity_interpolation}
\end{align}
The substitution thus provides a continuous bridge between the temperature gradient field $\nabla T$ and the flux field $\Lambda$, enabling a more convex and numerically stable optimization landscape.

Within the penalty framework, we further observe that a family of functions can be introduced to control the connectivity of the optimized domain in a manner distinct from conventional filtering or density penalization techniques. We define the following \textbf{generalized material interpolation function} (GMIF):
\begin{equation}
    Y(k_1, k_2, p, \chi) = \left[ (k_1^p - k_2^p) \chi + k_2^p \right]^{1/p}\quad \chi\in\bar{\mathcal{U}},
\end{equation}
where $k_1$ and $k_2$ represent physical quantities appearing in the state equation (such as conductivities or stiffnesses), $p \in \mathbb{R}$ is a tunable parameter, and $\chi \in \bar{\mathcal{U}}$. $p\in [-1,1]$ is of particular interest, as it recovers the left definition of \eqref{conductivity_interpolation} when $p=1$ and the right definition when $p=-1$.

\begin{figure}[htbp]
    \centering
    \includegraphics[width=0.5\linewidth]{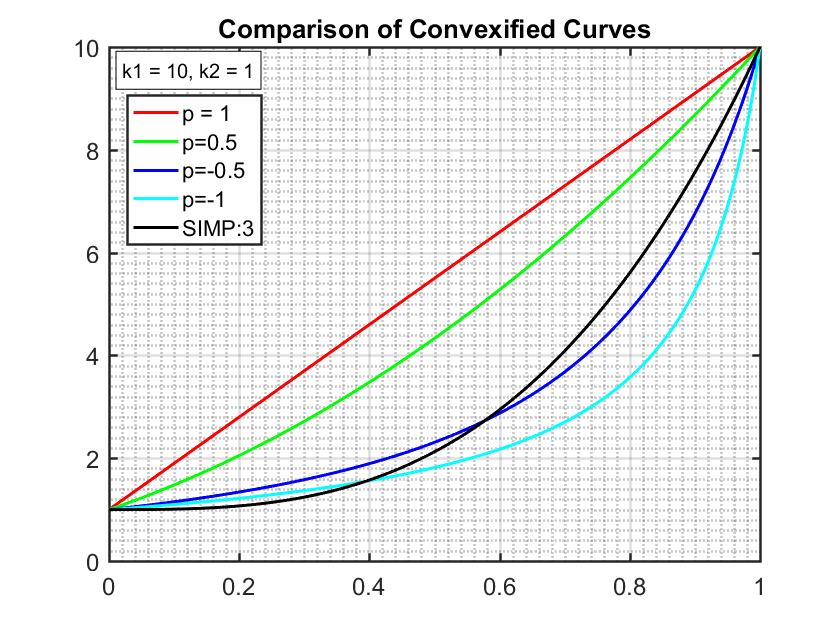}
    \caption{Behavior of the generalized material interpolation function $Y(k_1, k_2, p, \chi)$ for different values of the exponent $p$.}
    \label{fig:gmif-curves}
\end{figure}

As illustrated in Figure~\ref{fig:gmif-curves}, when $p = 1$, $Y$ recovers the standard arithmetic (weighted) average, corresponding to the original state equation. When $p \to -1$, $Y$ approaches the harmonic mean, which aligns with the dual (reciprocal) formulation of the state equation. Numerical experiments reveal that values of $p$ close to $1$ tend to produce more abrupt changes in material distribution and weaker domain connectivity, whereas values close to $-1$ promote smoother transitions and stronger connectivity. This family of interpolations thus provides a flexible and physically meaningful mechanism for controlling topological features in the optimized design.

\section{Numerical results}\label{sec:numerics}
In this section, we present a comprehensive set of numerical experiments to demonstrate the effectiveness, robustness, and practical performance of the proposed penalty-based method. The experiments are organized into two main parts: compliant mechanism design and heat transfer optimization with design-dependent heat sources. In the compliant mechanism part, we focus on the solution quality for two representative benchmark problems. In the heat transfer part, we not only evaluate the overall performance of the method but also investigate the influence of the exponent $p$ of the generalized material interpolation function (GMIF) on optimized topologies, particularly its effect on domain connectivity and material distribution smoothness.

\subsection{Compliant Mechanism Problems}
\begin{figure}[htbp]
\centering
\begin{minipage}[t]{0.48\textwidth}
\centering
\includegraphics[width=0.5\linewidth]{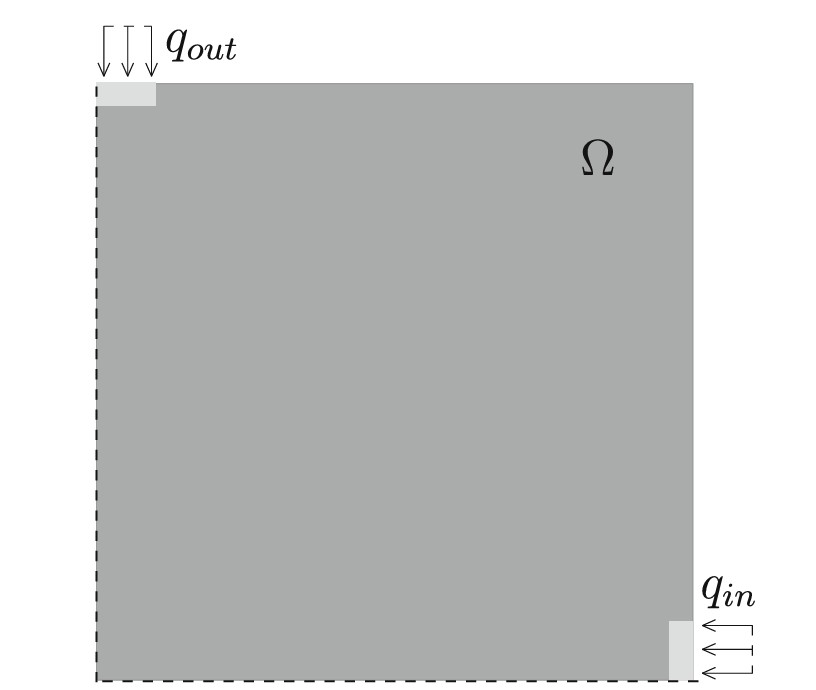}
\label{fig:paper2}
\end{minipage}
\begin{minipage}[t]{0.48\textwidth}
\centering
\includegraphics[width=0.85\linewidth]{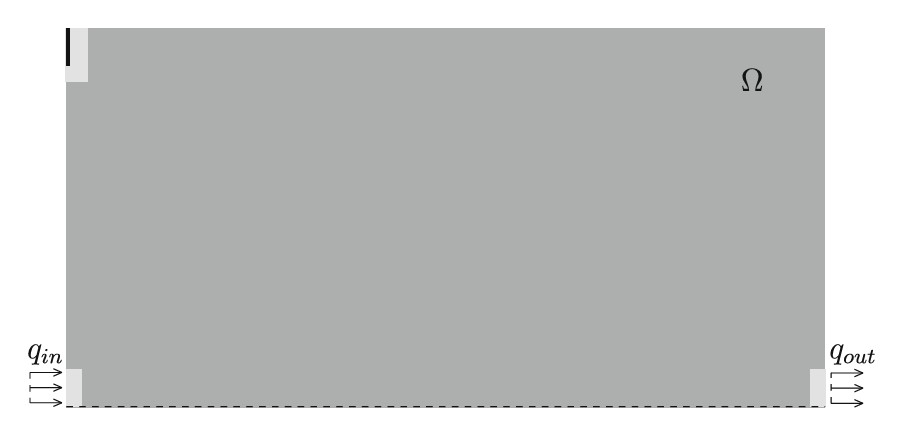}
\label{fig:paper4}
\end{minipage}
\caption{(adapted from \cite{Lops2016}) Dashed lines denote homogeneous tangential Neumann boundaries, thick solid lines indicate homogeneous Dirichlet boundaries, $q_{\text{in}}$ and $q_{\text{out}}$ represent inhomogeneous Neumann boundaries, remaining parts are homogeneous Neumann boundaries. }
\label{fig:models}
\end{figure}

We consider two representative model problems in compliant mechanism design with appropriate boundary conditions (cf. Figure~\ref{fig:models}). The common parameters are as follows: Young's modulus of the solid material $E_{\max} = 5000 \times 8/3$, Young's modulus of the artificial (void) material $E_{\min} = 10^{-5} E_{\max}$, the perimeter penalty parameter $\gamma = 0.1$, and the convolution parameter $\epsilon = h$ (where $h$ is the mesh size). The initial design is uniformly set to $\chi = \beta$ in both cases. For Model Problem 1, the input load is $q_{\text{in}} = -2$ and the output load $q_{\text{out}} = -1$, with a $400 \times 400$ mesh. For Model Problem 2, $q_{\text{in}} = 1$, $q_{\text{out}} = 1$, and a $600 \times 300$ mesh is used.

%The boundary conditions are illustrated in Figures~\ref{fig:models}: dashed lines denote homogeneous tangential Neumann boundaries, thick solid lines represent homogeneous Dirichlet boundaries, $q_{\text{in}}$ and $q_{\text{out}}$ indicate inhomogeneous Neumann boundaries, and all remaining portions are subject to homogeneous Neumann conditions.

\begin{figure}[htbp]
\centering
\begin{minipage}[t]{0.48\textwidth}
    \centering
    \includegraphics[width=0.6\linewidth]{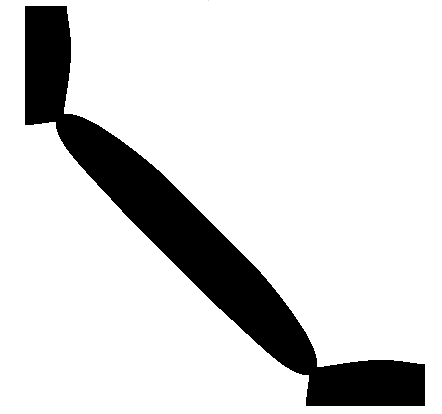}  % 请替换为实际文件名
    \caption{The final optimal shape.}
    \label{fig:final_shape1}
\end{minipage}
\hfill
\begin{minipage}[t]{0.48\textwidth}
    \centering
    \includegraphics[width=0.8\linewidth]{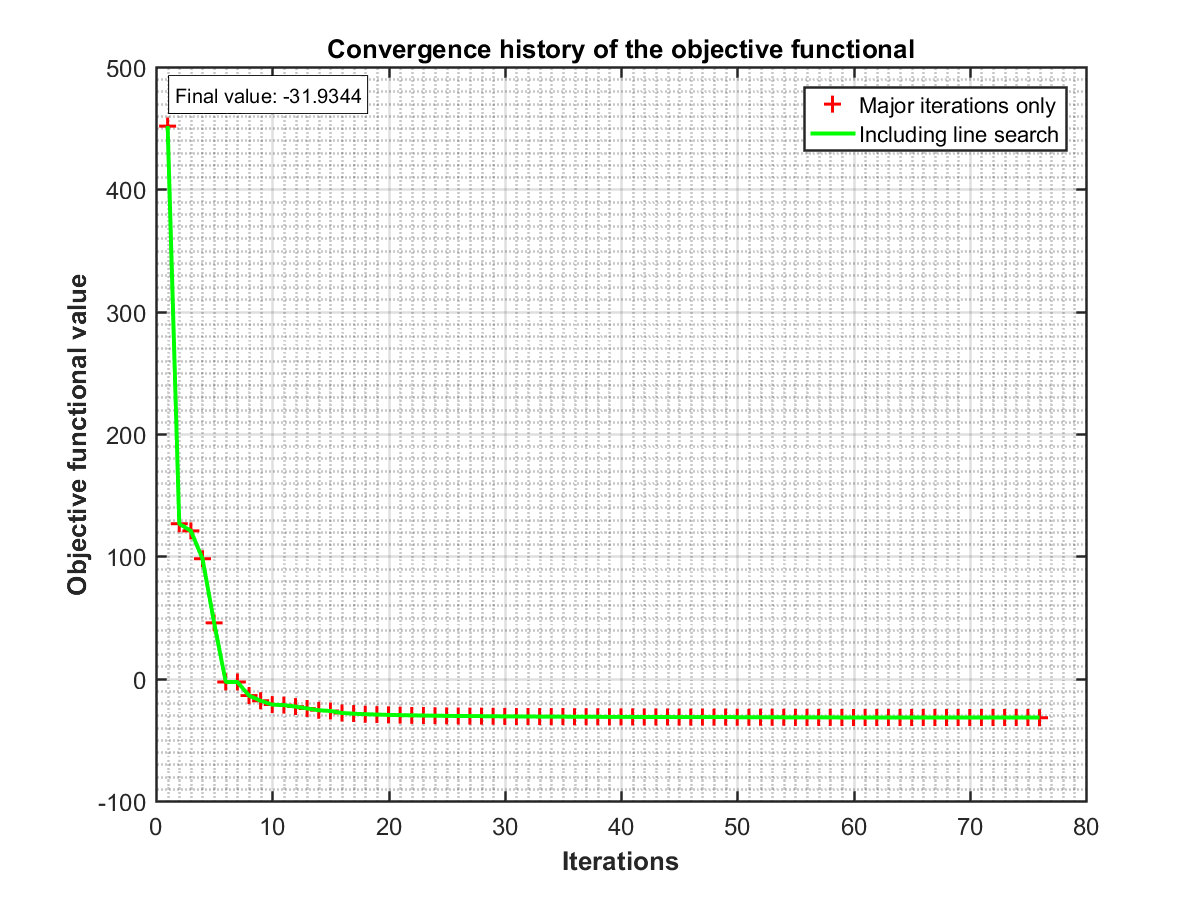}
    \caption{Convergence history of the objective functional.}
    \label{fig:energy1}
\end{minipage}
\label{fig:compliant_results1}
\end{figure}

\begin{figure}[htbp]
\centering
\begin{minipage}[t]{0.48\textwidth}
    \centering
    \includegraphics[width=\linewidth]{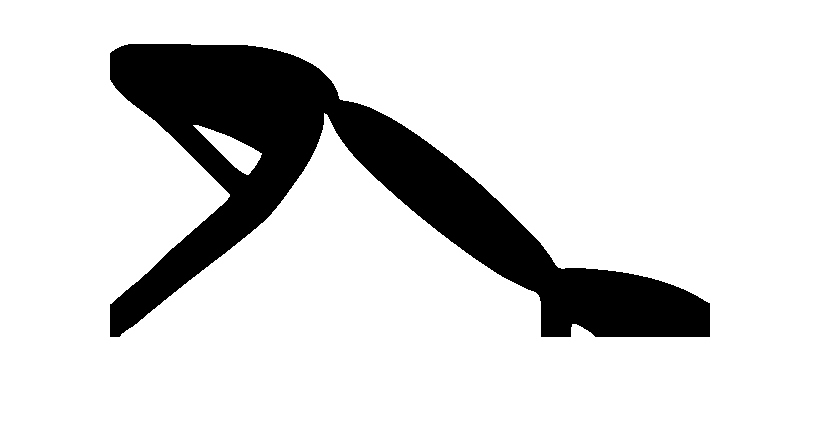}  % 请替换为实际文件名
    \caption{The final optimal shape.}
    \label{fig:final_shape2}
\end{minipage}
\hfill
\begin{minipage}[t]{0.48\textwidth}
    \centering
    \includegraphics[width=0.8\linewidth]{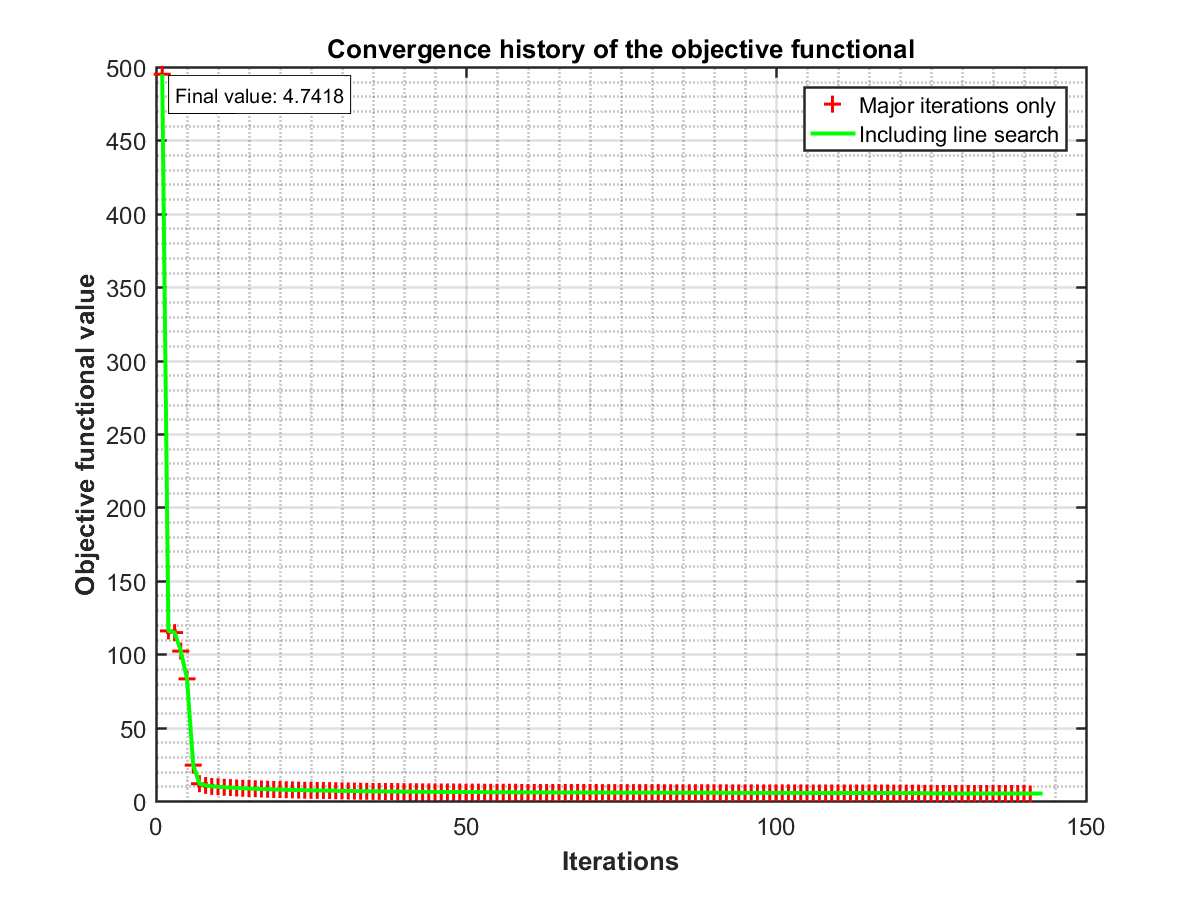}
    \caption{Convergence history of the objective functional.}
    \label{fig:energy2}
\end{minipage}
\label{fig:compliant_results2}
\end{figure}

\begin{figure}[htbp]
\centering
\begin{minipage}[t]{0.48\textwidth}
    \centering
    \includegraphics[width=0.6\linewidth]{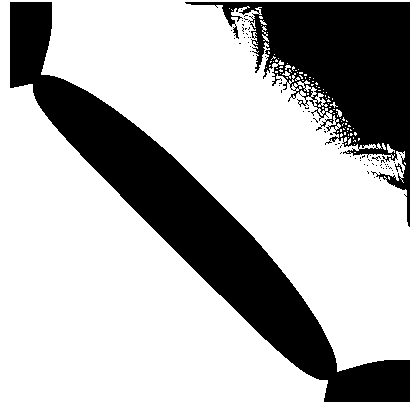}  % 请替换为实际文件名
    \caption{The final optimal shape.}
    \label{fig:final_shape3}
\end{minipage}
\hfill
\begin{minipage}[t]{0.48\textwidth}
    \centering
    \includegraphics[width=0.8\linewidth]{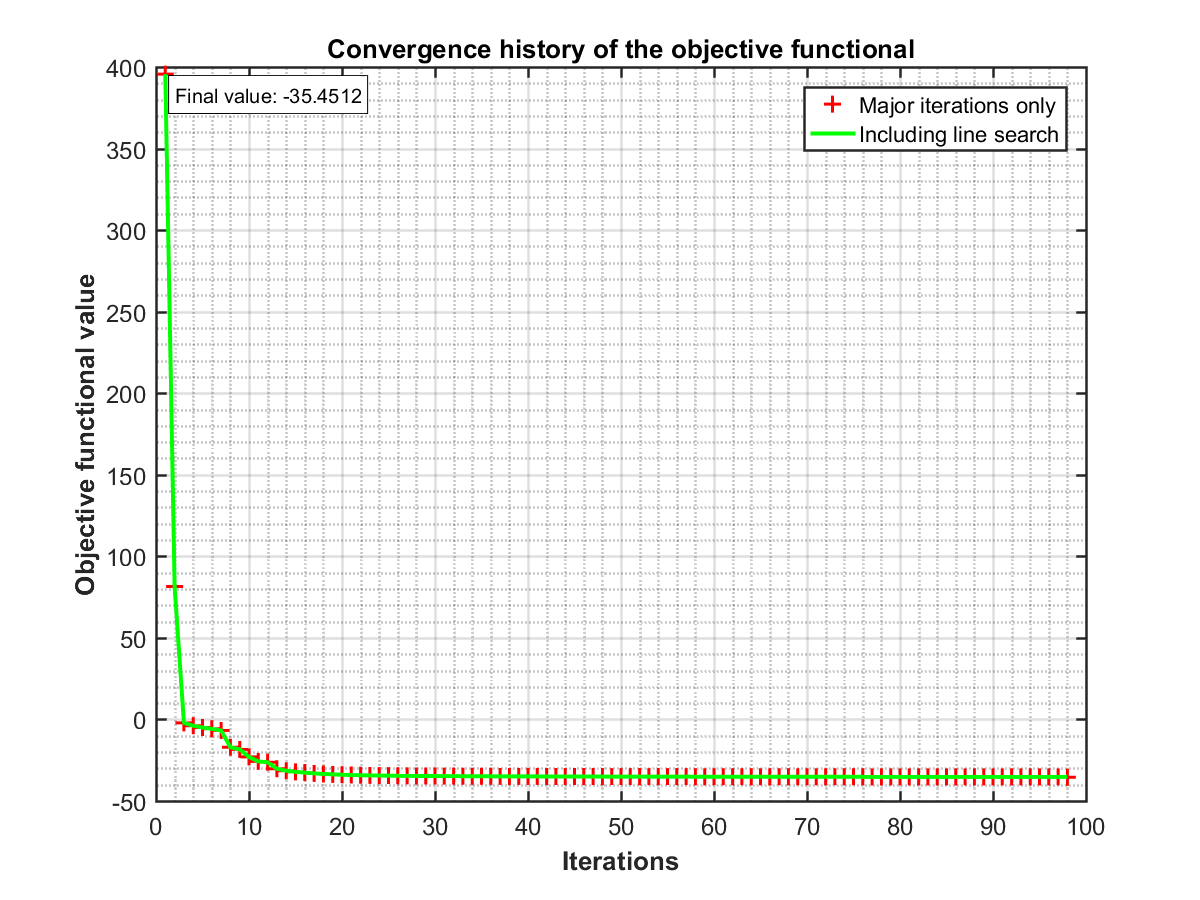}
    \caption{Convergence history of the objective functional.}
    \label{fig:energy3}
\end{minipage}
\label{fig:compliant_results3}
\end{figure}
The results in Figures~\ref{fig:final_shape1}--\ref{fig:energy2} demonstrate an excellent monotonic descent of the objective functional throughout the optimization, with convergence achieved in a relatively small number of iterations. The overall shape of the compliant mechanism emerges rapidly within the first few steps, while subsequent iterations primarily refine fine-scale details.

In Figure~\ref{fig:final_shape1}--\ref{fig:energy3}, the solid green line represents the evolution of the objective functional by also counting the line search steps, while the red plus markers show the evolution with only gradient descent steps. The near-perfect overlap of the two curves experimentally confirms that, although the penalty method incorporates line search to ensure a strict decrease, line search steps rarely occur in practice for compliant mechanism problems.

Additionally, Figure~\ref{fig:final_shape3} shows the result obtained for Model Problem 1 when the volume inequality constraint is replaced by an equality constraint. The method still produces a high-quality shape, further illustrating its robustness and numerical stability across variations in constraints, mesh resolutions, and loading conditions.

On the other hand, when using the displacement-based formulation \eqref{orign_penalty}, a sufficiently large penalty parameter $  \lambda  $ (in our experiments, $  \lambda = 25  $ is adequate) produces numerical results that are comparable to those obtained with the stress-based approach. This formulation is particularly advantageous when dealing with problems that involve design-dependent body forces.

For Model Problem 2, we also apply the generalized material interpolation function (GMIF) using the sequence of exponents $  p = -0.2, -0.4, -0.6, -0.8, -1  $, where the final optimal configurations and the convergence histories of the objective functional are illustrated in  Figures \ref{fig:ela_gmif_shapes} and \ref{fig:ela_gmif_function}, respectively. When \( p > 0 \), both the stress-based and displacement-based penalty formulations, if initialized with a uniform density \(\chi = \beta\), tend to converge to undesirable local minima. However, meaningful optimal shapes can be readily obtained by using a non-uniform initial configuration.

\begin{figure}[htbp]
\centering
\begin{minipage}[t]{0.19\textwidth}
    \centering
    \includegraphics[width=\linewidth]{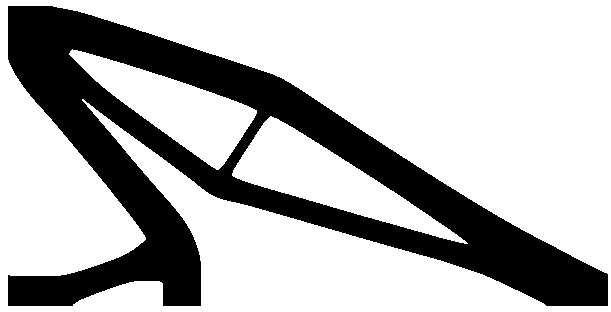}   % p = 1 的最终形状
    \label{fig:ela_p02}
\end{minipage}\hfill
\begin{minipage}[t]{0.19\textwidth}
    \centering
    \includegraphics[width=\linewidth]{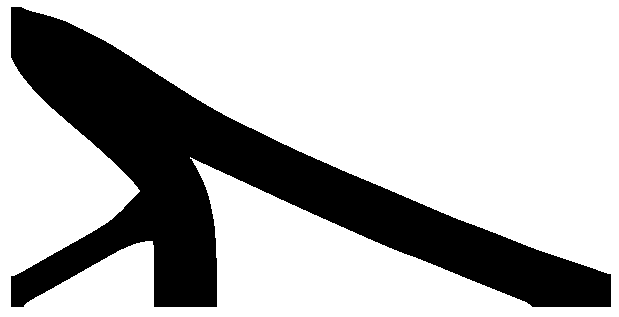}  % p = 0.5
    \label{fig:ela_p04}
\end{minipage}\hfill
\begin{minipage}[t]{0.19\textwidth}
    \centering
    \includegraphics[width=\linewidth]{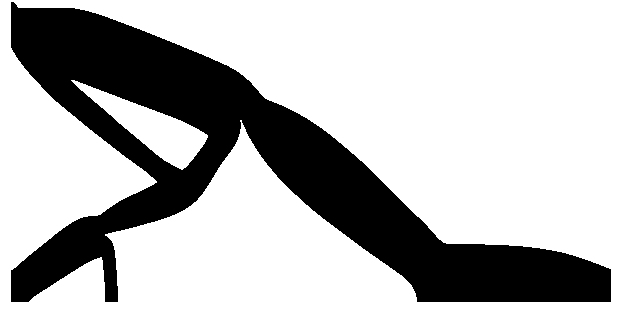}  % p = 0.1
    \label{fig:ela_p06}
\end{minipage}\hfill
\begin{minipage}[t]{0.19\textwidth}
    \centering
    \includegraphics[width=\linewidth]{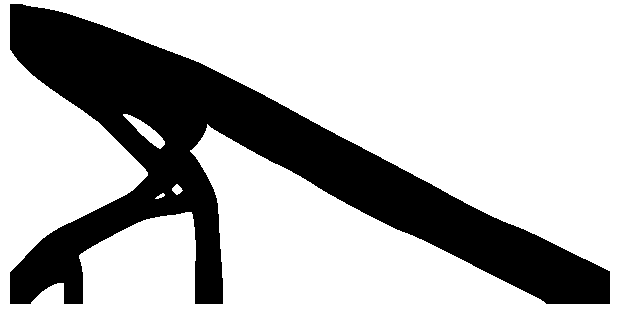} % p = -0.1
    \label{fig:ela_p08}
\end{minipage}\hfill
\begin{minipage}[t]{0.19\textwidth}
    \centering
    \includegraphics[width=\linewidth]{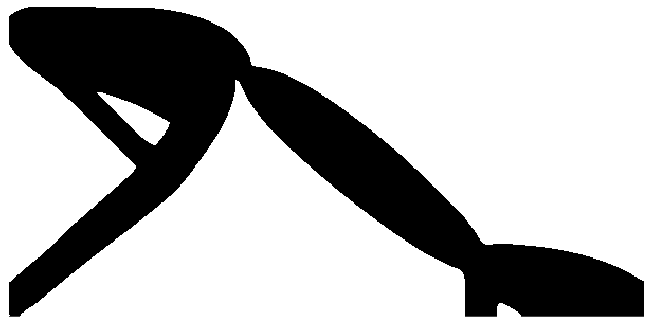}  % p = -1
    \label{fig:ela_pm1}
\end{minipage}
\caption{The final optimal shapes for the compliant mechanism problem obtained with the generalized material interpolation function (GMIF) with different exponent values of $p$. From left to right: $p=-0.2$, $p=-0.4$, $p=-0.6$, $p=-0.8$, $p=-1$.}
\label{fig:ela_gmif_shapes}
\end{figure}

\begin{figure}[htbp]
\centering
\begin{minipage}[t]{0.19\textwidth}
    \centering
    \includegraphics[width=\linewidth]{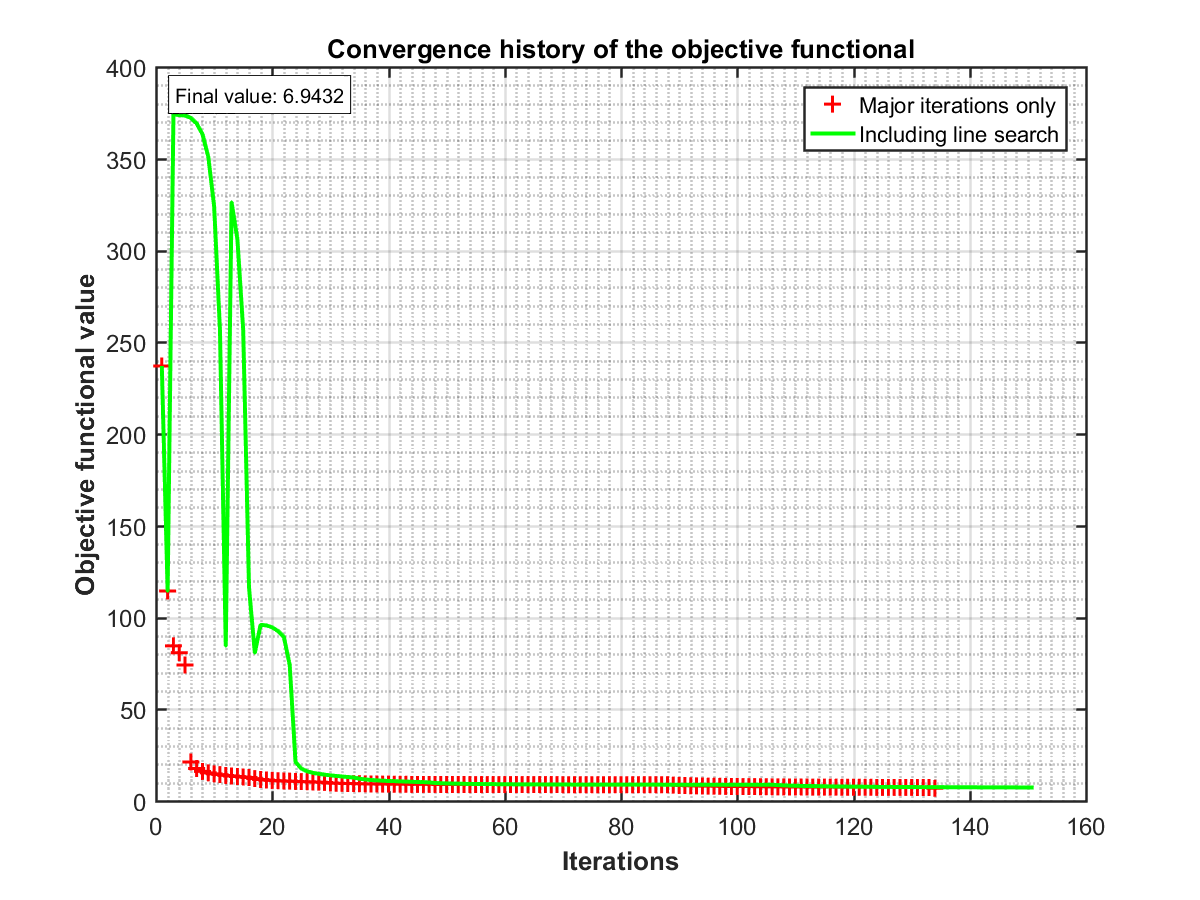}   % p = 1 的最终形状
    \label{fig:ela_pf02}
\end{minipage}\hfill
\begin{minipage}[t]{0.19\textwidth}
    \centering
    \includegraphics[width=\linewidth]{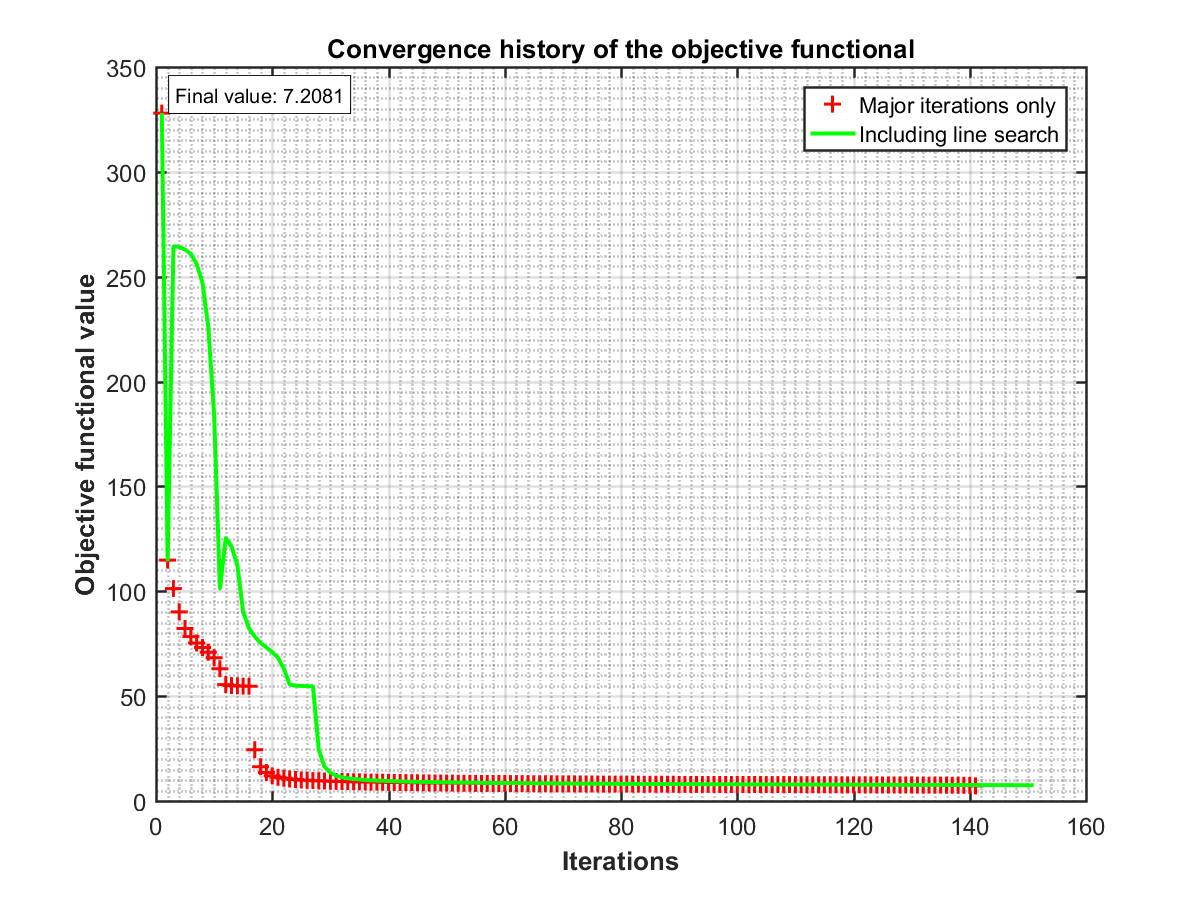}  % p = 0.5
    \label{fig:ela_pf04}
\end{minipage}\hfill
\begin{minipage}[t]{0.19\textwidth}
    \centering
    \includegraphics[width=\linewidth]{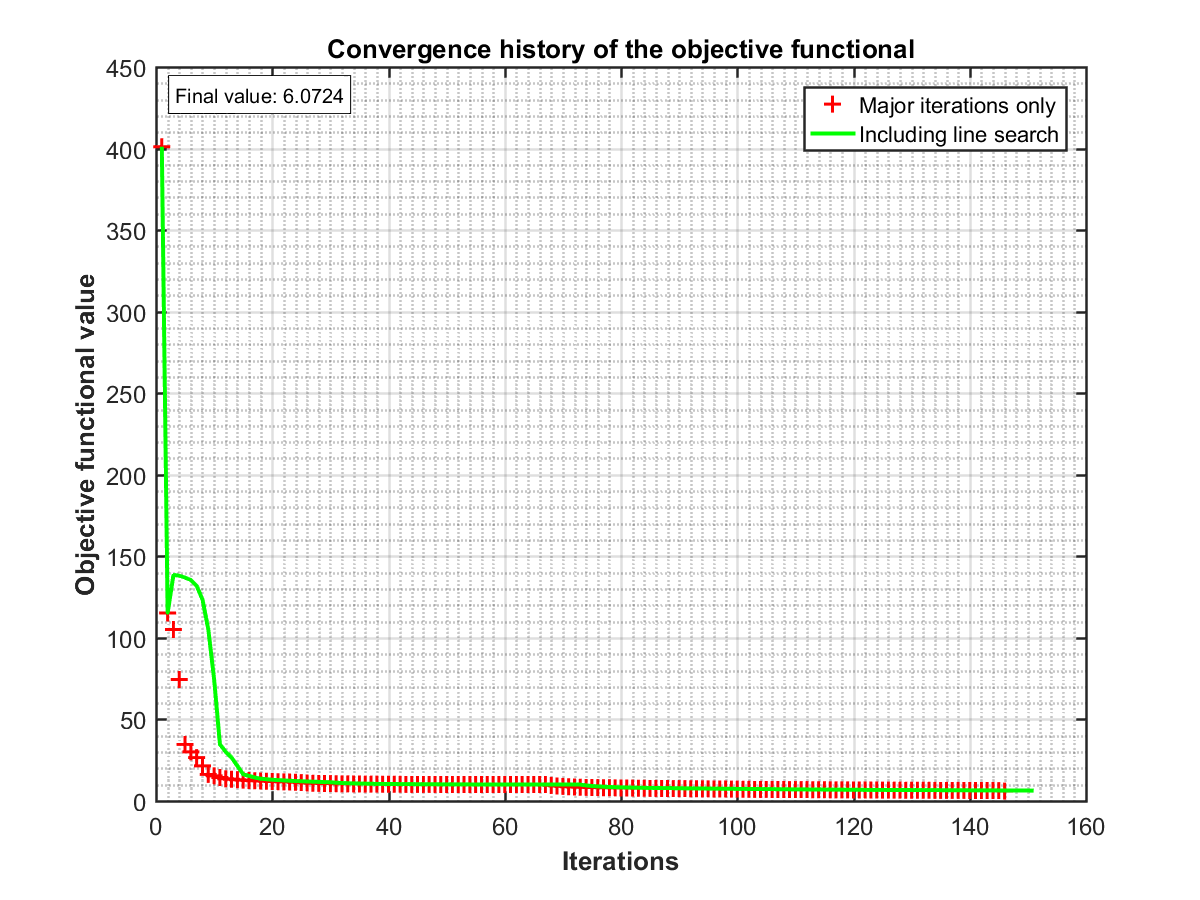}  % p = 0.1
    \label{fig:ela_pf06}
\end{minipage}\hfill
\begin{minipage}[t]{0.19\textwidth}
    \centering
    \includegraphics[width=\linewidth]{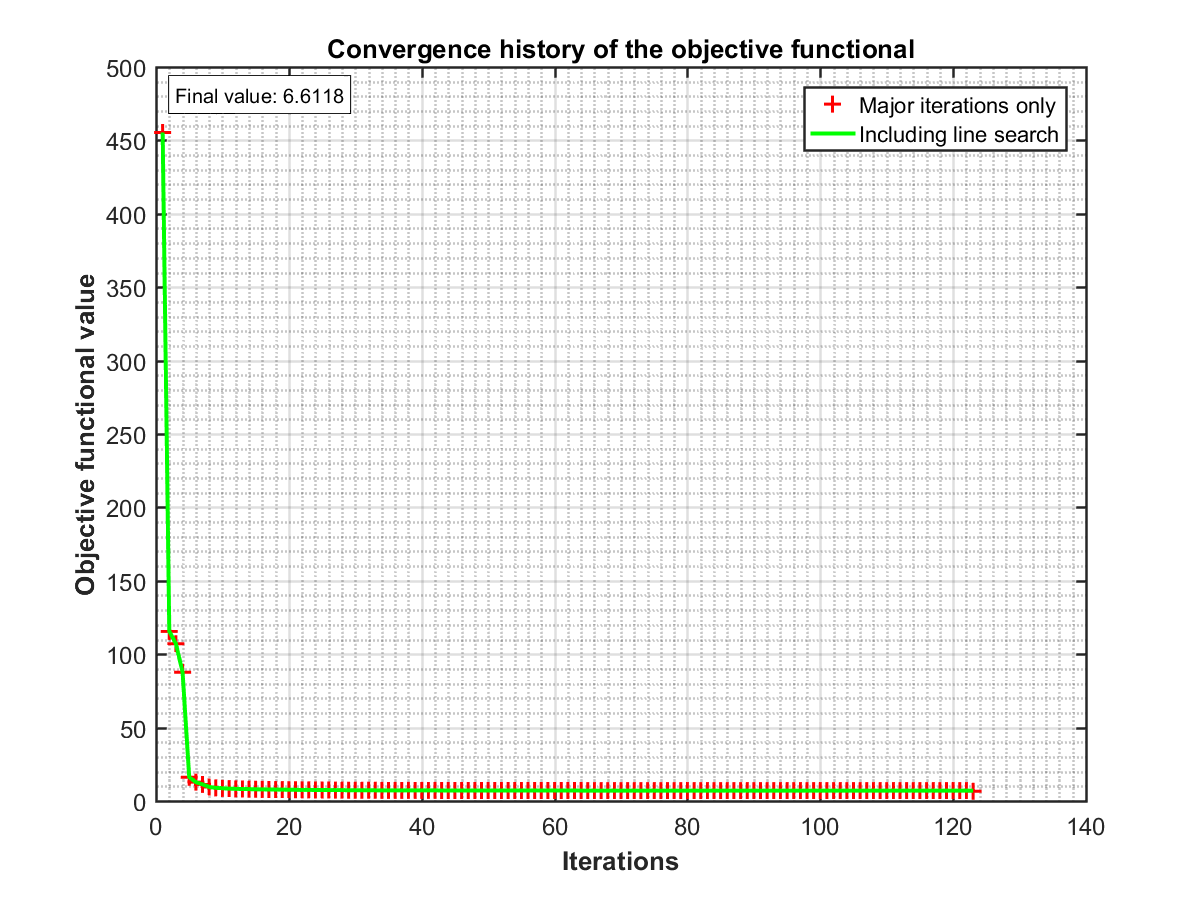} % p = -0.1
    \label{fig:ela_pf08}
\end{minipage}\hfill
\begin{minipage}[t]{0.19\textwidth}
    \centering
    \includegraphics[width=\linewidth]{ela-3-function_val_comp.png}  % p = -1
    \label{fig:ela_pfm1}
\end{minipage}
\caption{The convergence history of the objective functional for compliant mechanism problems obtained with the generalized material interpolation function (GMIF) with different exponent values of $p$. From left to right: $p=-0.2$, $p=-0.4$, $p=-0.6$, $p=-0.8$, $p=-1$.}
\label{fig:ela_gmif_function}
\end{figure}

% \begin{figure}[htbp]
% \centering
% \begin{minipage}[t]{0.48\textwidth}
%     \centering
%     \includegraphics[width=0.8\linewidth]{elastic_shape1_origin.png}  % 请替换为实际文件名
%     \caption{The final optimized shape.}
% \end{minipage}
% \hfill
% \begin{minipage}[t]{0.48\textwidth}
%     \centering
%     \includegraphics[width=0.8\linewidth]{ela-8-function_val_comp.png}
%     \caption{Convergence history of the objective functional.}
% \end{minipage}
% \label{fig:compliant_results4}
% \end{figure}
% \begin{figure}[htbp]
% \centering
% \begin{minipage}[t]{0.48\textwidth}
%     \centering
%     \includegraphics[width=0.8\linewidth]{ela-7-shape.png}  % 请替换为实际文件名
%     \caption{The final optimized shape.}
% \end{minipage}
% \hfill
% \begin{minipage}[t]{0.48\textwidth}
%     \centering
%     \includegraphics[width=0.8\linewidth]{ela-7-function_val_comp.png}
%     \caption{Convergence history of the objective functional.}
% \end{minipage}
% \label{fig:compliant_results4}
% \end{figure}

\subsection{Heat transfer problems}
In this subsection, we consider the model problem for heat transfer optimization, cf. Figure \ref{fig:heat_model}. In the schematic diagram, thin solid lines denote homogeneous Neumann boundary conditions, while thick solid lines indicate inhomogeneous Dirichlet boundary conditions.
\begin{figure}[htbp]
\centering  
\begin{minipage}[t]{0.48\textwidth}
    \centering
    \includegraphics[width=0.6\linewidth]{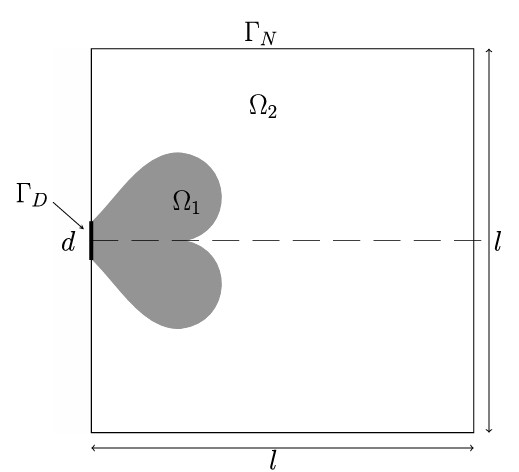}  % 请替换为实际文件名
    \caption{An illustration of the heat transfer model (cf. \cite{CHEN2024113119}).}
    \label{fig:heat_model}
\end{minipage}
\hfill
\begin{minipage}[t]{0.48\textwidth}
    \centering
    \includegraphics[width=0.6\linewidth]{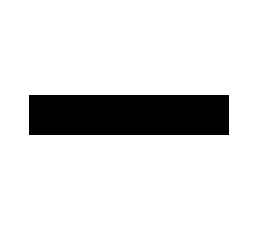}
    \caption{The initial shape.}
    \label{fig:heat_initial_shape}
\end{minipage}
\label{fig:heat_problem}
\end{figure}
In numerical experiments for the heat transfer problem, we fix the following parameters: thermal conductivities $\kappa_1 = 10$ and $\kappa_2 = 1$, heat generation rates $q_1 = 1$ and $q_2 = 100$, a uniform mesh with elements $600 \times 600$, penalty coefficient $\lambda = 0.1$, perimeter penalty coefficient $\gamma = 0.1$ and convolution parameter $\epsilon = h$ (where $h$ is the mesh size). The homogeneous Dirichlet boundary condition is imposed on a segment of length $1/5$ of the domain width, centered on the left boundary. The initial design is a thin horizontal strip of width $1/5$ of the domain width, located in the center of the domain (cf. Figure \ref{fig:heat_initial_shape}).

\begin{figure}[htbp]
\centering
\begin{minipage}[t]{0.19\textwidth}
    \centering
    \includegraphics[width=\linewidth]{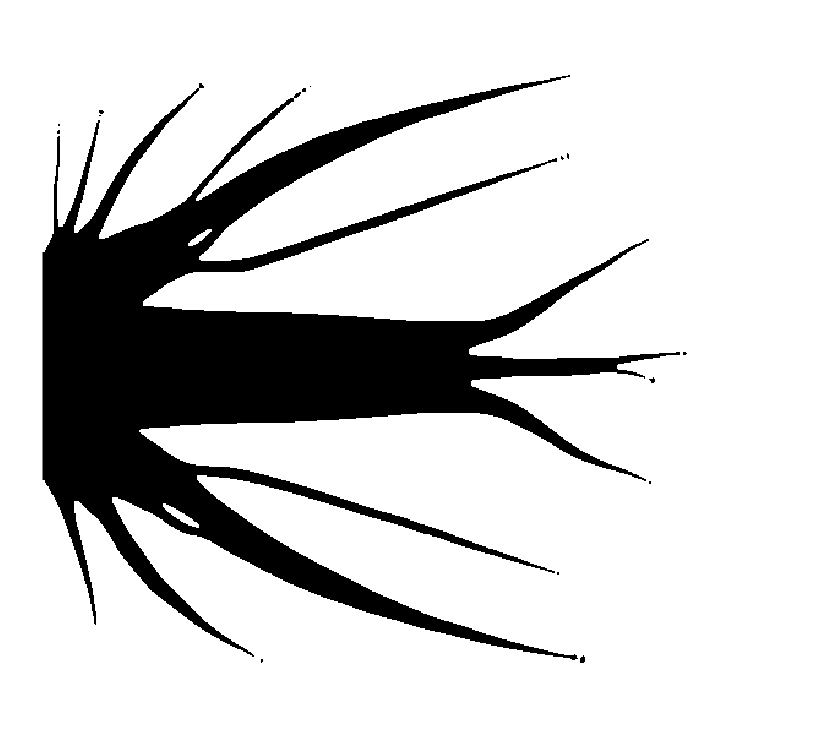}   % p = 1 的最终形状
    \label{fig:heat_p1}
\end{minipage}\hfill
\begin{minipage}[t]{0.19\textwidth}
    \centering
    \includegraphics[width=\linewidth]{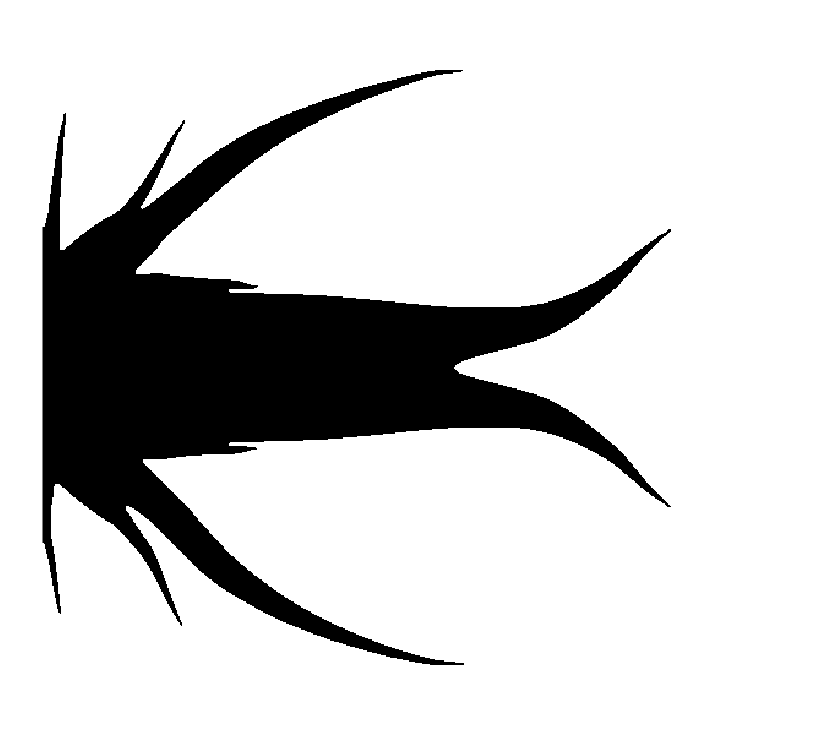}  % p = 0.5
    \label{fig:heat_p05}
\end{minipage}\hfill
\begin{minipage}[t]{0.19\textwidth}
    \centering
    \includegraphics[width=\linewidth]{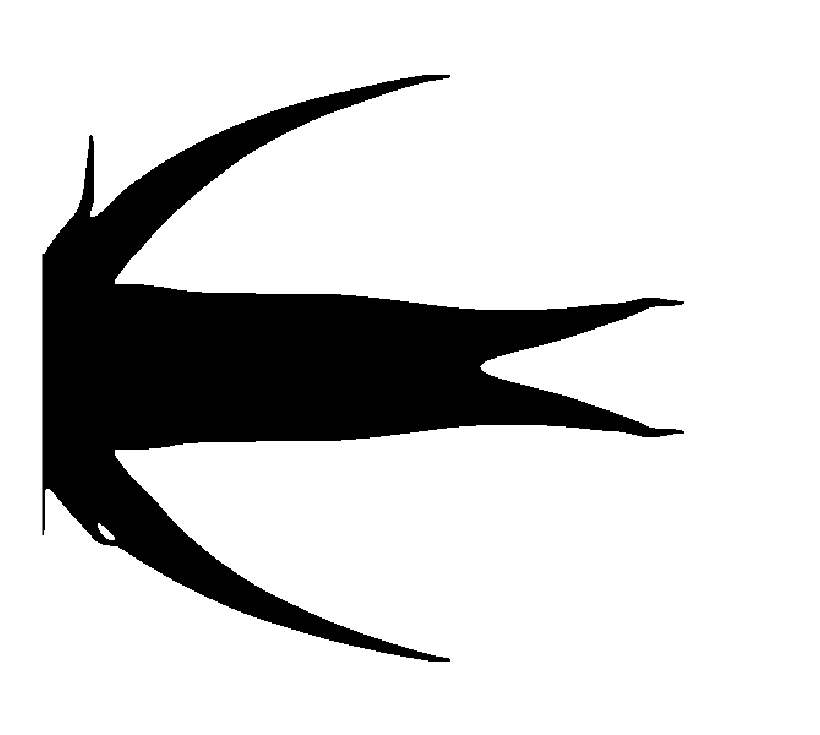}  % p = 0.1
    \label{fig:heat_p01}
\end{minipage}\hfill
\begin{minipage}[t]{0.19\textwidth}
    \centering
    \includegraphics[width=\linewidth]{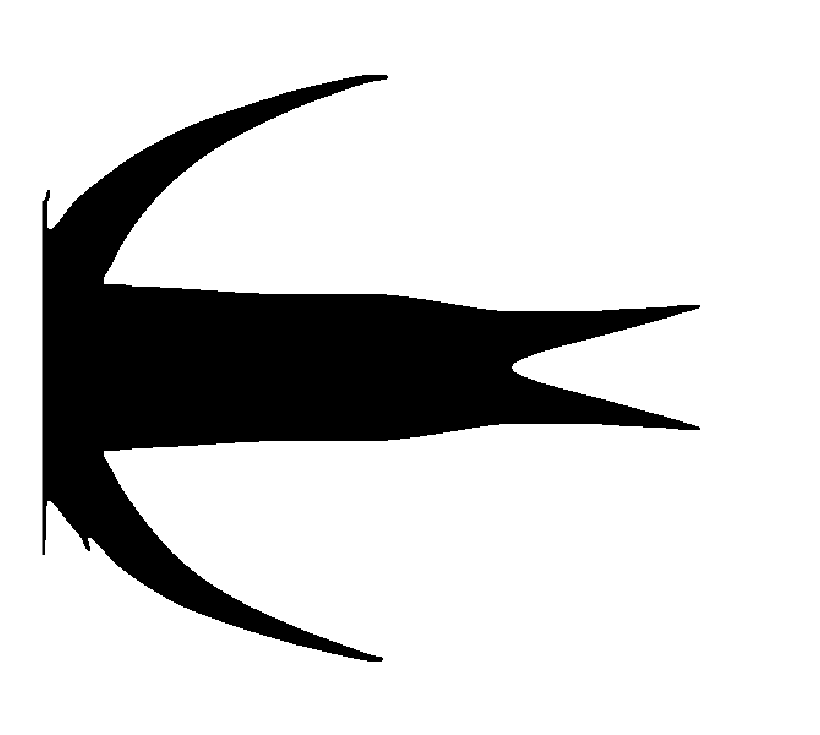} % p = -0.1
    \label{fig:heat_pm01}
\end{minipage}\hfill
\begin{minipage}[t]{0.19\textwidth}
    \centering
    \includegraphics[width=\linewidth]{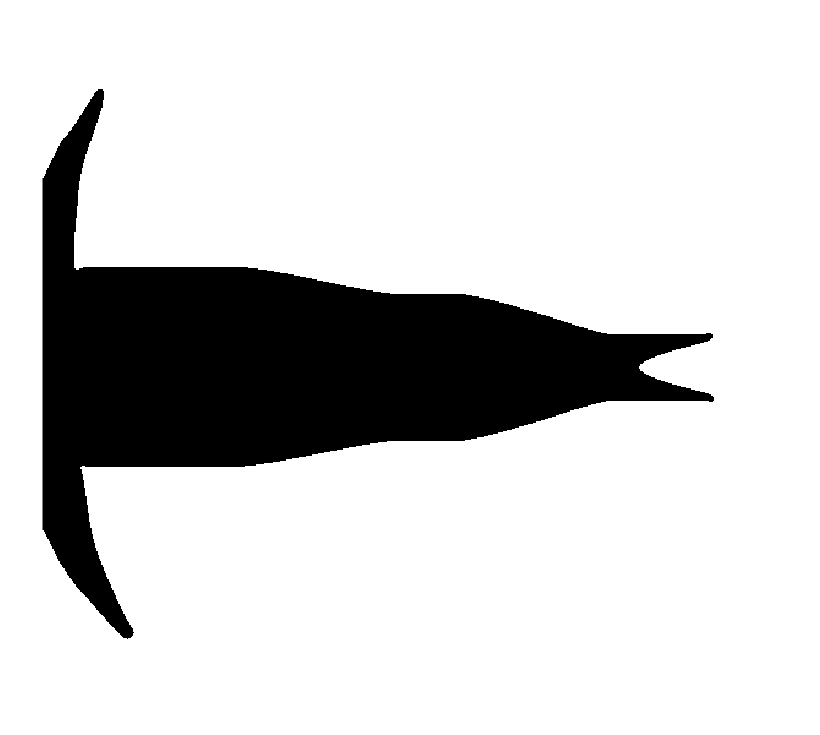}  % p = -1
    \label{fig:heat_pm1}
\end{minipage}
\caption{The final optimal shapes for the heat transfer problem obtained using the generalized material interpolation function (GMIF) with different exponent values $p$. From left to right: $p=1$, $p=0.5$, $p=0.1$, $p=-0.1$, $p=-1$.}
\label{fig:heat_gmif_shapes}
\end{figure}

\begin{figure}[htbp]
\centering
\begin{minipage}[t]{0.19\textwidth}
    \centering
    \includegraphics[width=\linewidth]{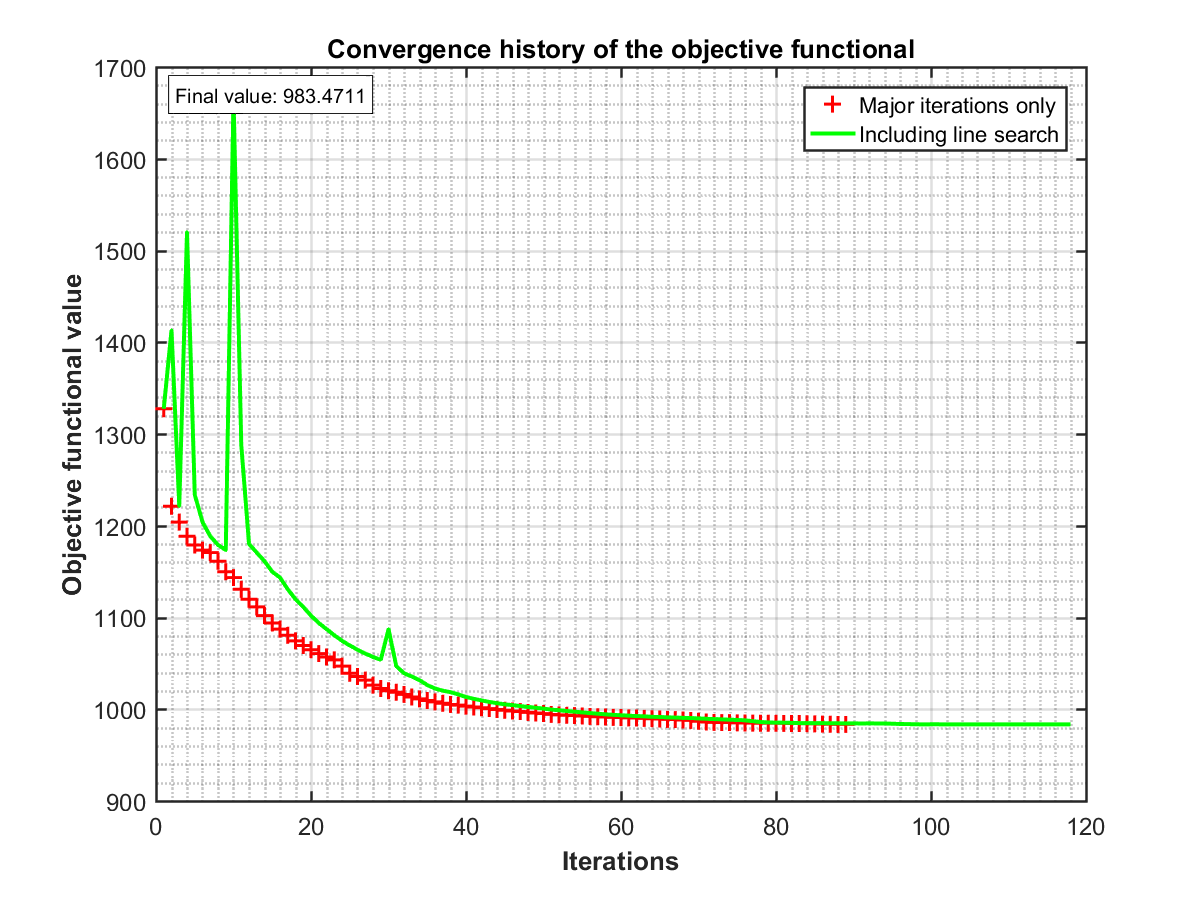}   % p = 1 的最终形状
    \label{fig:heat_p1}
\end{minipage}\hfill
\begin{minipage}[t]{0.19\textwidth}
    \centering
    \includegraphics[width=\linewidth]{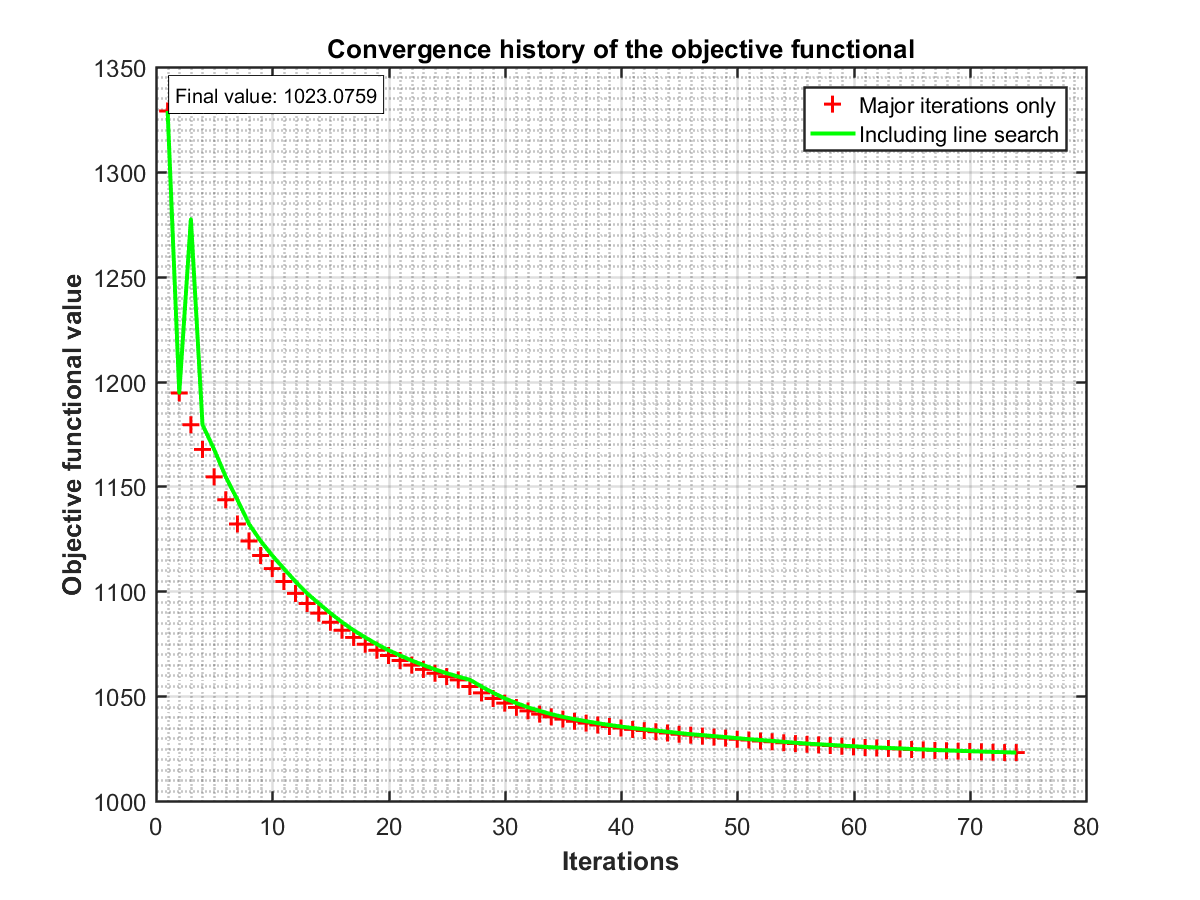}  % p = 0.5
    \label{fig:heat_p05}
\end{minipage}\hfill
\begin{minipage}[t]{0.19\textwidth}
    \centering
    \includegraphics[width=\linewidth]{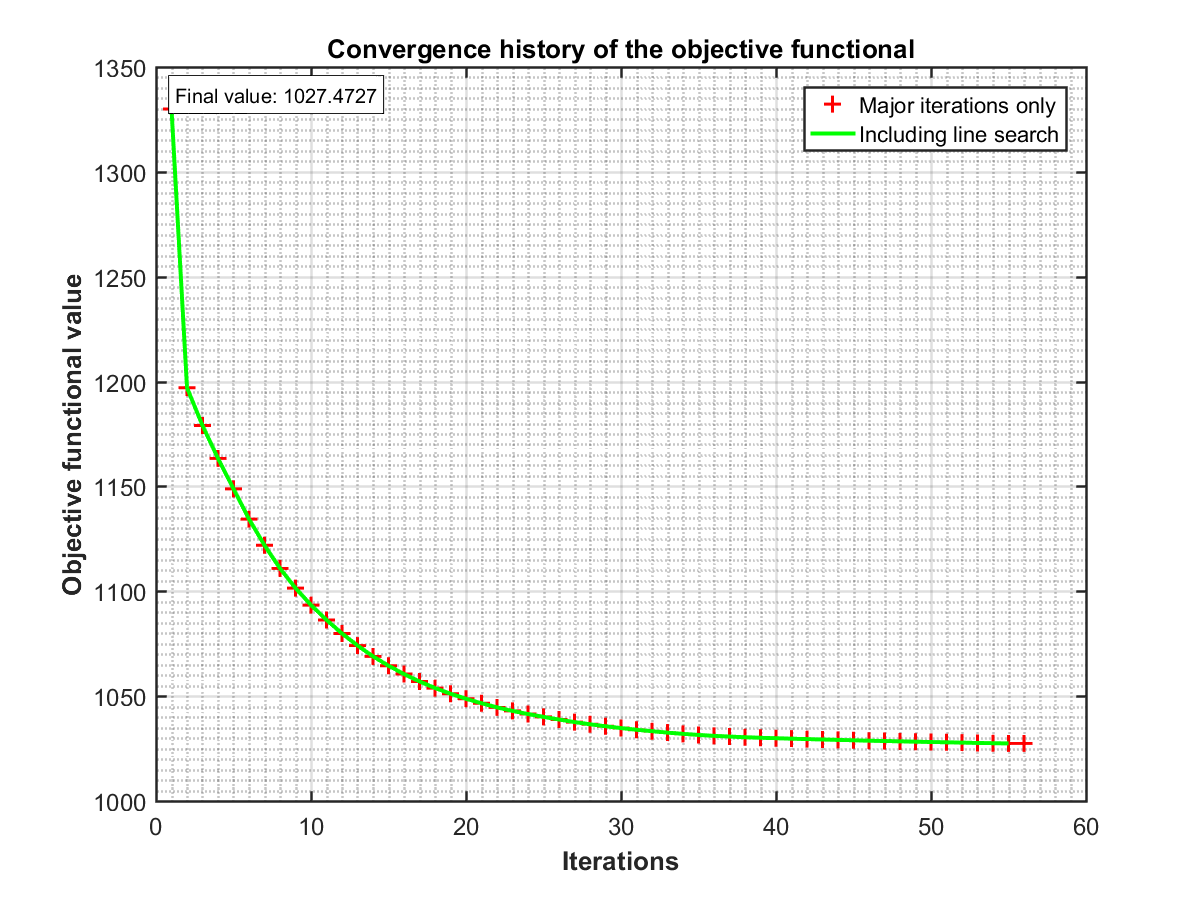}  % p = 0.1
    \label{fig:heat_p01}
\end{minipage}\hfill
\begin{minipage}[t]{0.19\textwidth}
    \centering
    \includegraphics[width=\linewidth]{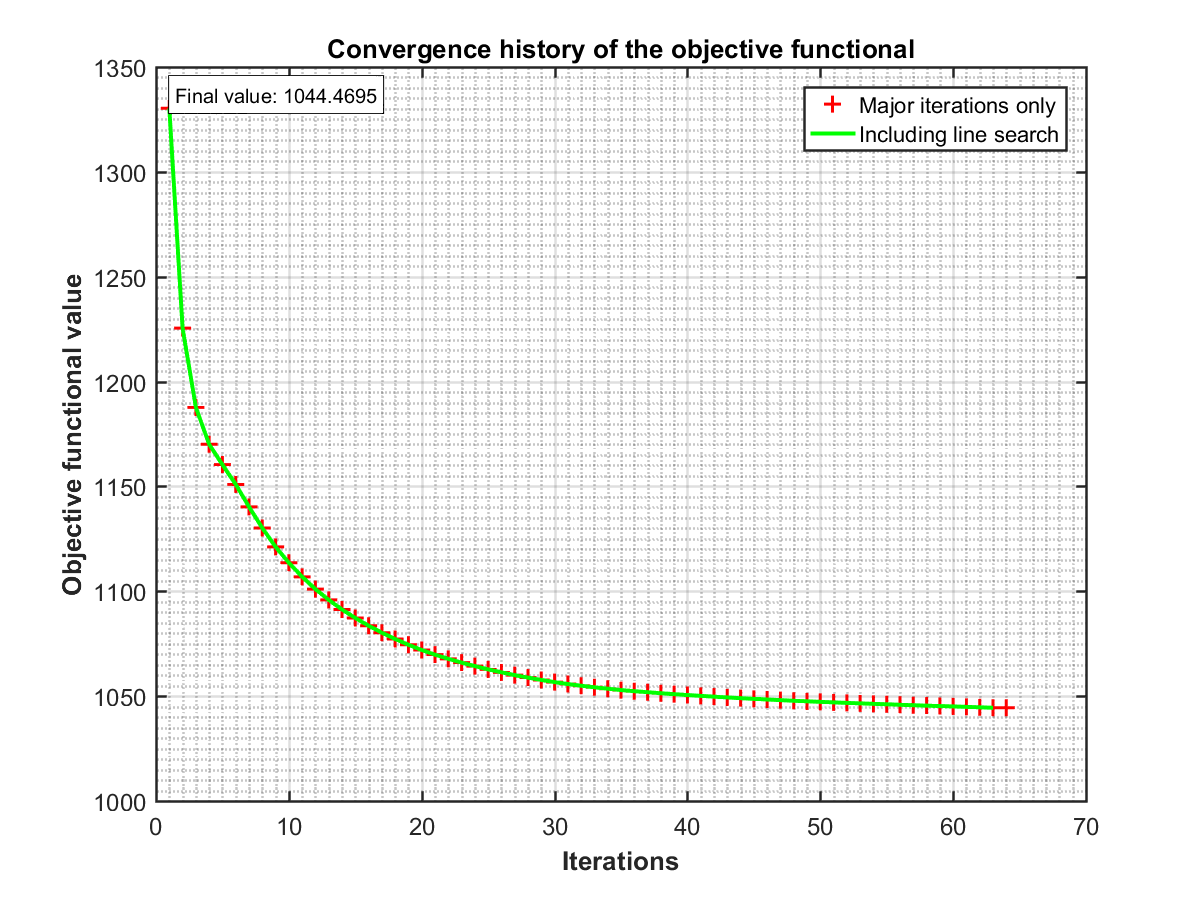} % p = -0.1
    \label{fig:heat_pm01}
\end{minipage}\hfill
\begin{minipage}[t]{0.19\textwidth}
    \centering
    \includegraphics[width=\linewidth]{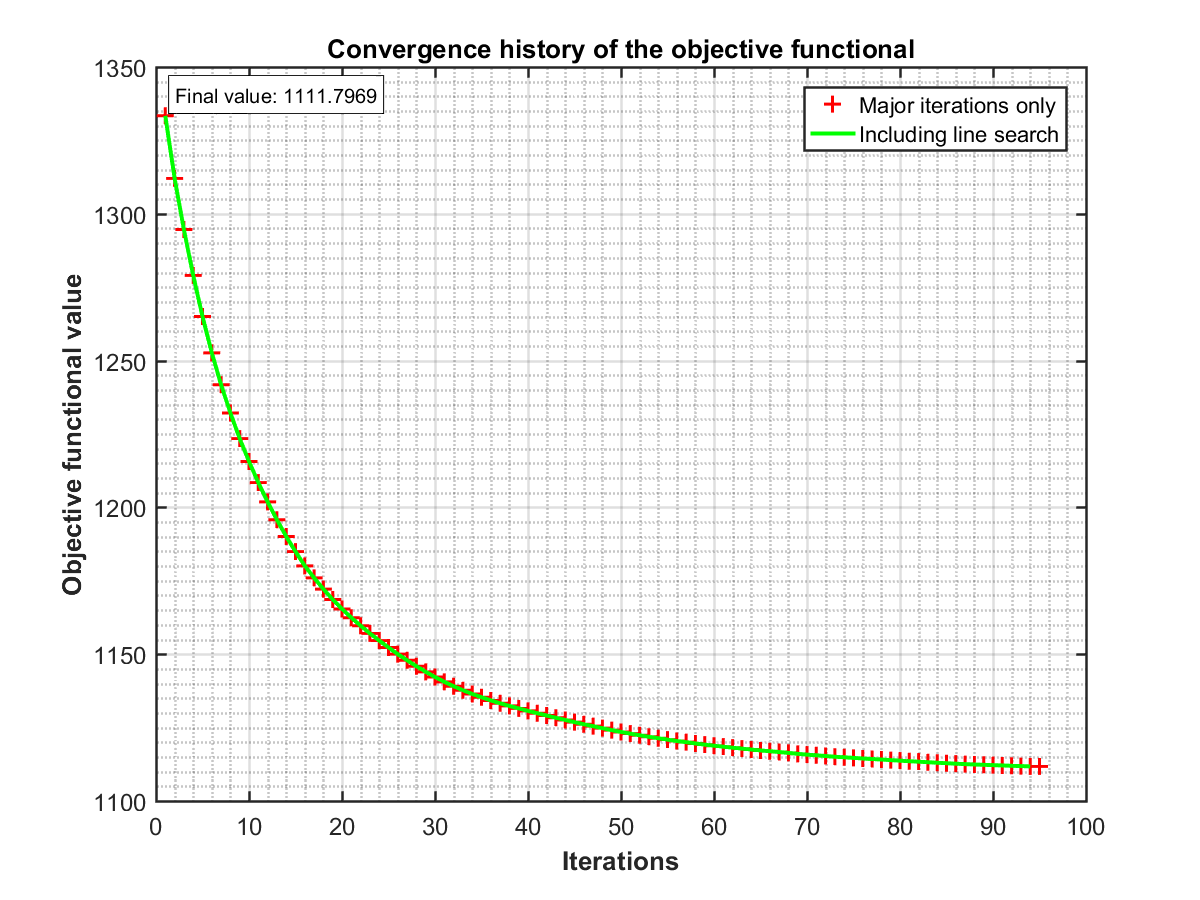}  % p = -1
    \label{fig:heat_pm1}
\end{minipage}
\caption{The convergence history of the objective functional for heat transfer problems obtained using the generalized material interpolation function (GMIF) with different exponent values $p$. From left to right: $p=1$, $p=0.5$, $p=0.1$, $p=-0.1$, $p=-1$.}
\label{fig:heat_gmif_function}
\end{figure}

The numerical results in Figures~\ref{fig:heat_gmif_shapes} and \ref{fig:heat_gmif_function} reveal that, for all tested values of the exponent $p$, the objective functional exhibits a satisfactory monotonic descent throughout the optimization process. However, when $p=1$, a relatively large number of line search procedures are required, and the final optimized shape displays some disconnected features near the tips. These artifacts can be partially mitigated by mesh refinement, although the improvement remains limited. 

Using a sufficiently decreasing $p$ final highly connected shapes can be obtained almost immediately, accompanied by a significant reduction in the number of line search steps. As $p$ decreases further, the frequency of line search evaluations continues to decrease, and when $p=-1$, the algorithm requires no line search steps at all. Nevertheless, the resulting shape at $p=-1$ is suboptimal, suggesting that the optimization process ends at an undesirable local minimum. These observations indicate that the exponent $p$ in the generalized material interpolation function provides an effective and intuitive mechanism to balance topological connectivity, numerical stability, and solution quality.

\section{Conclusion}\label{sec:conclusion}
In this paper, we propose a novel penalty-based reformulation for solving topology optimization problems in non-self-adjoint settings, with a particular focus on compliant mechanism design. Starting from the classical displacement-based formulation, we introduced a variable substitution that transforms the problem into an equivalent bilevel optimization problem expressed in terms of stress-like variables. By applying a carefully designed differentiable penalty term to enforce the state constraint, we obtained a single-level penalized functional that is equivalent to the original problem for sufficiently large penalty parameters. This reformulation not only preserves the essential physics of the problem, but also enables a more stable and convexified optimization landscape.

The proposed method was shown to be versatile and effective in different physics. In the compliant mechanism problem, we rigorously established the equivalence between the penalized formulation and the original problem, and proved that the discrete penalized problems $\Gamma$-converge to the continuous one as the mesh size $h \to 0$ and the regularization parameter $\epsilon \to 0$. Furthermore, under appropriate isolation assumptions, local minimizers of discrete problems converge to isolated local minimizers of the continuous problem. A monotonic descent algorithm was developed that combines gradient descent updates and $L^1$-projection, which guarantees a strict decrease of the objective at each iteration and finite termination in the discrete setting. Numerical experiments confirmed that the method produces high-quality mesh-independent designs with precise control of maximum stress and improved convergence behavior compared to classical approaches.

The same penalty framework was applied directly to the heat transfer problem, demonstrating its generality and robustness in handling design-dependent material properties and source terms. Remarkably, the method retained its effectiveness without requiring major structural modifications.

A key theoretical observation is the emergence of a family of functions, termed the \textbf{Generalized Material Interpolation Function} (GMIF), defined as
\begin{equation}
    Y(k_1,k_2,p,\chi)=\left((k_1^p-k_2^p)\chi+k_2^p\right)^{\frac{1}{p}}.
\end{equation}
By simply tuning the exponent $p$, this family allows continuous interpolation between the arithmetic mean ($p=1$) and the harmonic mean ($p\to -1$), offering a flexible and physically meaningful mechanism to control the connectivity and topological features of optimal design. The numerical results indicate that values of $p$ near $1$ promote abrupt changes and weaker connectivity, while values near $-1$ favor smoother transitions and stronger connectivity. This control mechanism is different from conventional filtering or penalization techniques and provides a promising tool to tailor the topological complexity of the solution.

The contributions of this work lay a solid foundation for further theoretical and numerical developments. Future research directions include: (i) establishing a more abstract mathematical framework for the proposed penalty approach, with a deeper analysis of its functional properties and convergence behavior in general non-self-adjoint settings; (ii) rigorously investigating the mathematical mechanism behind the connectivity control offered by the GMIF family, possibly through variational inequalities or shape calculus; and (iii) extending the method to more complex multiphysics problems, such as fluid-structure interaction or thermo-electromechanical systems, where the interplay between different physical fields poses additional challenges.

Overall, the penalty method and the associated GMIF interpolation introduced in this paper offer a theoretically sound and computationally efficient pathway toward reliable topology optimization in non-self-adjoint problems, with broad potential applications in engineering design and materials science.
%\newpage
\printbibliography 

\end{document}